\newcommand{\R}{{\mathbb R}}
\theoremstyle{plain}
\newtheorem{theorem}{Theorem}[section]
\newtheorem{cor}{Corollary}[section]
\newtheorem{lemma}{Lemma}[section]
\newtheorem{proposition}{Proposition}[section]
\theoremstyle{definition}
\newtheorem{definition}{Definition}[section]
\newtheorem{example}{Example}[section]
\newtheorem{remark}{Remark}[section]
\newtheorem{conjecture}{Conjecture}[section]
\numberwithin{equation}{section}
\begin{document}

\title[Thresholds of IFS Families] {Thresholds for One-Parameter Families of Affine Iterated Function Systems}
\author[A. Vince]{Andrew Vince \\ University of Florida \\ Gainesville, FL, USA}
\address{Department of Mathematics \\ University of Florida \\ USA}
\email{\tt  avince@ufl.edu} 
\subjclass[2010]{28A80}
\keywords{iterated function system, one-parameter family, attractor}
\thanks{This work was partially supported by a grant from the Simons Foundation (322515 to Andrew Vince).}

\begin{abstract}  This  paper examines thresholds for certain properties of the attractor of a general
one-parameter affine family of iterated functions systems.  As the parameter increases, the iterated function system becomes 
less contractive, and the attractor evolves.  Thresholds are studied for the following properties:  the existence of
an attractor, the connectivity of the attractor, and the existence of non-empty interior of the attractor.  Also
discussed are transition phenomena between existence and non-existence of an attractor.  
\end{abstract}

\maketitle

\section{Introduction}

An iterated function system (IFS) in this paper is a finite set 
\[ F=\{f_{1},f_{2}, \dots ,f_{N}\}\]
of $N\geq 2$ affine functions from $\R^d$ to $\R^d$.   A function $f$ is {\it affine} if it is of the form $f(x) = Lx+a$,
where $L$ is a {\it non-singular} linear map ($n\times n$ real matrix) and $a\in \R^d$.  The linear map $L$ will be referred to as the
{\it linear part} of $f$, and $a$ as the {\it translational part} of $f$.  A special case of an IFS
is a {\it similarity IFS} for which each function is a similarity transformation. 

For the collection $\mathbb{H}$ of non-empty compact subsets of
$\R^d$, the classical Hutchinson operator $F\,:\,{\mathbb{H}}(\R^d)\rightarrow
{\mathbb{H}}(\R^d)$ is given by 
\[ F(K)=\bigcup_{f \in F} f (K).\]
By abuse of language, the same notation $F$ is used for the IFS, the set of functions in the IFS, and for the
Hutchinson operator; the meaning should be clear by the context.  A compact set $A$ is the {\it attractor} of $F$ if
\begin{equation} \label{eq:lim}  A=\lim_{k\rightarrow\infty}F^{k}(K),\end{equation}
where $F^k$ denotes the $k$-fold composition; the limit is with respect to the Hausdorff metric and is independent of the set $K\in \mathbb H$.
Note that, if it exists, the attractor is the unique compact set $A$ such that
\begin{equation} \label{eq:fixed} 
A=F(A).
\end{equation}

One-parameter affine IFS families are the topic of this paper.  Let 
 $F = \{f_1, f_2, \dots, f_N\}$ be a set of non-singular affine transformations on $\R^d$  and 
$Q = \{q_1, q_2, \dots, q_N\}$  a set of vectors in $\R^d$.  We consider the general family of affine IFSs
 \begin{equation} \label{eq:f1} F_t = \{  t\, f_i(x) + q_i \, : 1\leq i \leq N \}\end{equation}
depending on a real parameter $t\geq 0$.  Call such a  family a {\bf one-parameter affine family}.
A one-parameter family is called a {\bf similarity family} is each function is a similarity transformation. 
 The parameter $t$ is directly related to the contractivity of the IFS, the nearer the parameter $t$ is to $0$, the more contractive 
the functions in the IFS.   Example~\ref{ex:intro} below shows the evolution of the attractor $A_t$ of a particular one-parameter similarity 
family $F_t$.  

The goal of this paper is to examine thresholds for certain properties of the attractor 
$A_t$ of $F_t$.  In particular, we ask whether there exist numbers $t_0, t_1, t_2$ such that:

\begin{itemize}
\item $F_t$ has an attractor for $t < t_0$, but no attractor for $t>t_0$.  
\item $A_t$ is disconnected for $t<t_1$, but connected for $t>t_1$.
\item $A_t$ has empty interior for  $t<t_2$, but non-empty interior for $t>t_2$.
\end{itemize}
We also examine phenomena at the point $t_0$ of transition between attractor and no attractor. 
 
 An attractor with non-empty interior is important in certain IFS constructions of tilings of Euclidean space (see \cite{BV2,BV3}) .  
Note that, in general, there is no direct relation between whether the attractor of an IFS is connected and whether it has non-empty interior.  
For example, the Koch curve is connected, but has empty interior, and the attractor in Example~\ref{ex:shooter} (Figure~\ref{fig:shooter}) is disconnected, but 
has non-empty interior.

\begin{example}  \label{ex:intro}  As an example on $\R^2$, consider the one-parameter similarity family
$F_t = \{ f_t , \;g_t  \},$
where 
\[ f_t \begin{pmatrix} x\\ y \end{pmatrix} = t\, Q \begin{pmatrix} x\\ y \end{pmatrix}, \qquad \qquad 
g_t \begin{pmatrix} x\\ y \end{pmatrix} =  t \Bigg (.4\, Q \begin{pmatrix} x\\ y \end{pmatrix} - \frac{.4}{\sqrt{2}}  
\begin{pmatrix} 1\\ 1 \end{pmatrix} \Bigg )+ \begin{pmatrix} 1\\ 0 \end{pmatrix}\Bigg \}\]
and $Q$ is the rotation by $\pi/4$: \[Q = \frac{1}{\sqrt{2}}\, \begin{pmatrix} 1 &-1 \\ 1 & 1 \end{pmatrix}.\]
 Figure~\ref{fig:morph} shows the attractor of $F_t$ for increasing values of $t$. 

\begin{figure}[htb]  
\vskip 3mm
\includegraphics[width=5cm, keepaspectratio]{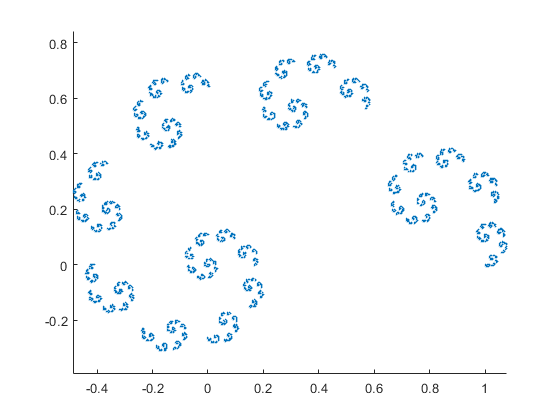}\includegraphics[width=5cm, keepaspectratio]{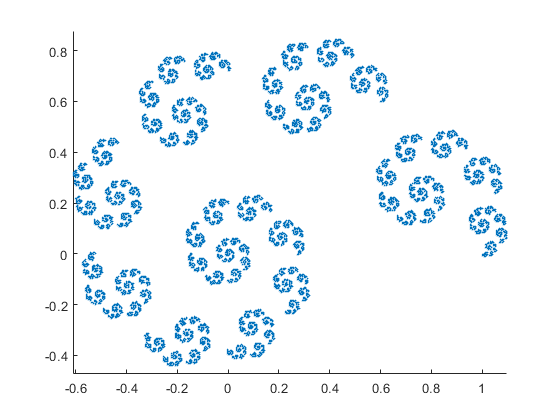}\includegraphics[width=5cm, keepaspectratio]{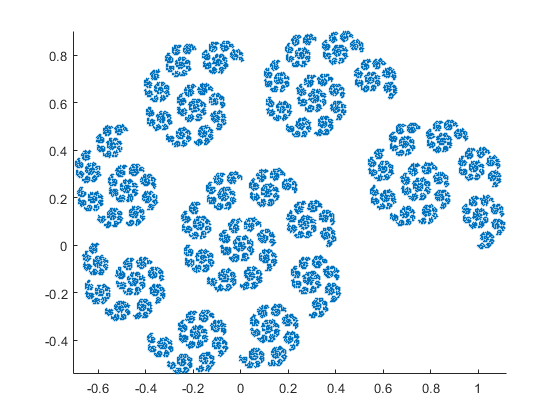} \vskip 5mm \includegraphics[width=5cm, keepaspectratio]{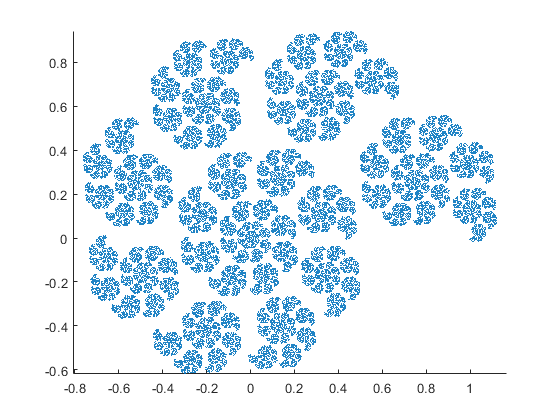}\includegraphics[width=5cm, keepaspectratio]{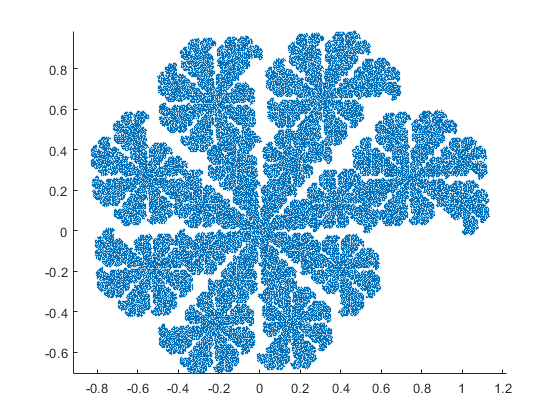}\includegraphics[width=5cm, keepaspectratio]{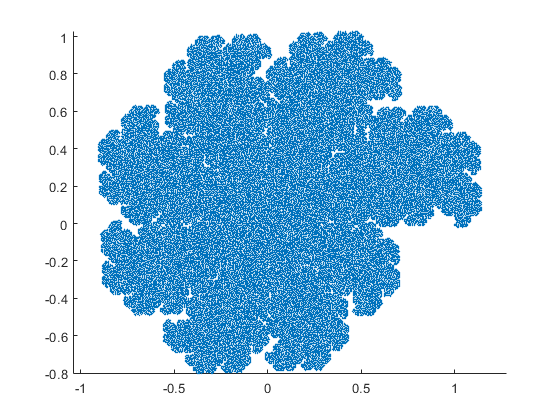}
\caption{The attractor $A_t$ for the one-parameter affine family $F_t$ of  Example~\ref{ex:intro} for successive parameter values $t = .8, .85, .88, .9, .92, .94$.}
\label{fig:morph}
\end{figure}
\end{example}

\section{Previous  Results} \label{sec:previous}

A particular $2$-dimensional one-parameter similarity family that has emerged as a topic of interest \cite{B1,B2,MB,S1,S2} is
\begin{equation*} \label{eq:M} \{ t Q(x), \, t Q(x) +(1,0)\},\end{equation*}
where $Q$ is a rotation about the origin.  This family is often expressed in  complex form, with
complex parameter $\tau$, as
\begin{equation} \label{eq:man} F_{\tau}= \{ \tau z, \, \tau z+1\},\end{equation}
 where $\tau \in \mathbb D = \{z : |z| < 1\}$. 
 It was shown already in \cite{DK} that, for the family \eqref{eq:man}, if $\tau \notin \R$ and $|\tau|$ is sufficiently close to $1$, then $A_{\tau}$ has non-empty interior. 

If $A_{\tau}$ denotes the attractor of $F_{\tau}$, then the set
\[M := \{ \tau \in \mathbb D : A_{\tau} \; \text{ is connected}\},\]
introduced in \cite{BH} as an analog of the classical Mandelbrot set, is called the Mandelbrot set for $F_{\tau}$.
 The portion of $M$ in the first quadrant is shown in white in Figure~\ref{fig:mandel}, this graphic due to Christoph Bandt.  It is shown in \cite{Bo}
that $M$ is connected and locally connected. 

\begin{figure}[htb]  
\includegraphics[width=7cm, keepaspectratio]{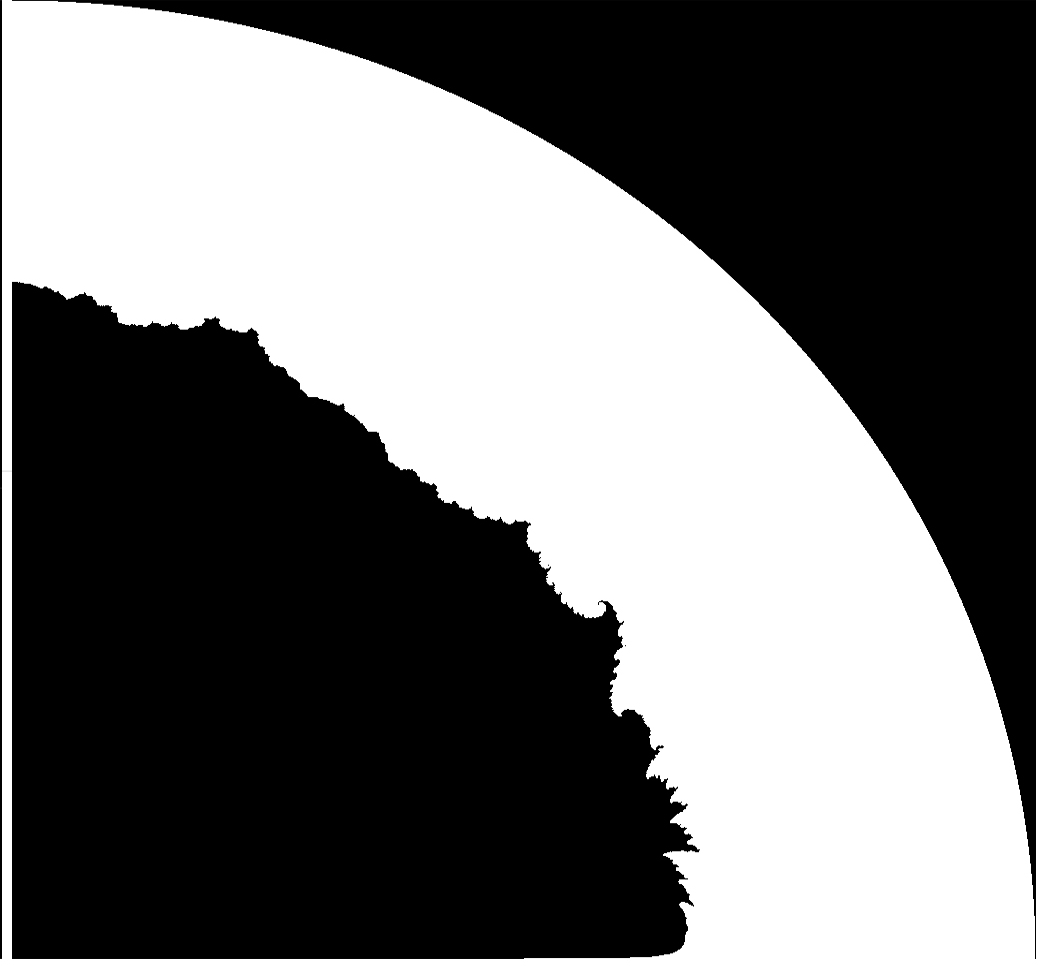}
\caption{Mandelbrot set for the family \eqref{eq:man}, due to C. Bandt.}
\label{fig:mandel}
\end{figure}

 Call an IFS $F$ in $\R^d$ {\it degenerate} if there exists an invariant affine subspace of dimension less than $d$ that is common to all the functions in $F$; otherwise call $F$ {\it non-degenerate}.  Hare and Siderov \cite{HS1, HS2} studied the IFS $F_M := \{Mv -u, Mv+u\}$, where $M$ is a $d\times d$ real matrix and $u\in \R^d$ is a vector such that $\text{span}\{M^n u: n\geq 0\} = \R^d$.  They proved that if the IFS is non-degenerate and if $|\det M| \geq 1/2$, then the attractor $A_{F}$ is connected, and if $|\det M| \geq 2^{-1/d}$,
 then $A_F$ has non-empty interior.  The latter occurs if the modulus of each eigenvalue of $M$ is between  $2^{-1/{d^2}}$ and $1$.  If the
dimension $d=2$ and the eigenvalues of $M$ are complex, then, via a conjugacy, the IFS $F_M$ is
equivalent to the IFS $F_{\tau}$ of \eqref{eq:man}.  In this case, their result states that if $1 > |\tau| > 1/2$, then $A_{\tau}$ is connected, and if $1 > |\tau| > 2^{-1/4} \approx 0.84$, then $A_{\tau}$ has non-empty interior. 

 In \cite{LW,LY} the authors study connectedness and disk-likeness of the attractor for a particular family of self-similar {\it digit tiles} (see \cite{V}) in $\R^2$.  The family depends on a parameter involving the digits.  They provide sufficient conditions on the parameter for the attractor to be disk-like and
necessary and sufficient conditions for the attractor to be connected.  

\section{Organization and Summary of Results} \label{sec:organize}

Concerning a threshold for the existence of an attractor of the one-parameter affine family $F_t$ of Equation~\eqref{eq:f1}, a complete answer is provided in 
Section~\ref{sec:existence}.  
\begin{itemize}
\item
There is a threshold $t_0 = 1/\rho(F)$, where $\rho(F)$ is the joint spectral radius of the linear parts
of the functions in $F$.  For all $t$ such that $t\in [0,t_0)$, the affine IFS $F_t$ has a unique attractor, and for all $t >t_0$ the $F_t$ has no attractor (see Corollary~\ref{cor:existence}).  
\end{itemize}
The definition of  joint spectral radius is given in Section~\ref{sec:existence}.  

At the point $t_0$ between the 
existence and non-existence of an attractor, certain transition phenomena can occur.  When $F_t$ is what we call a {\it bounded} family (in particular, for a bounded family there is a ball $B$ such that  $A_t \subset B$ for $t\in [0,t_0)$),  
{\it transition attractors} are defined in Section~\ref{sec:transition}.  The {\it lower transition attractor} $A_*$ is unique. The main conjecture of this paper
  (Conjecture~\ref{conj:lim})  is that, for a bounded family $F_t$, there is a  unique {\it upper transition attractor} rather than multiple upper transition attractors.  
If this is the case, then the upper transition attractor is $\lim_{t\rightarrow t_0} A_t$.  The conjecture  is true in the
$1$-dimensional case, and graphical evidence seems to support it in dimension $2$.    
The following results, for lower transition attractor $A_*$ and any upper transition attractor $A^*$, appear in Section~\ref{sec:transition}.  
\begin{itemize}
\item  Although $A_*$ and $A^*$ may not satisfy the Equation~\eqref{eq:lim} that defines the attractor of an IFS, 
they do satisfy the fixed point property of Equation~\eqref{eq:fixed}, i.e.  $F_{t_0}(A^*) = A^*$ and $F_{t_0}(A^*) = A^*$.
\item It is not necessarily the case that $A_* = A^*$, but  $A_*\subseteq A^*$ and $conv (A_*) = conv (A^*)$, where {\it conv} denotes the convex hull.
\end{itemize} 

Some of our results hold for what we call {\it semi-linear} one-parameter families;  some hold with the exception of {\it linear} or {\it quasi-linear} one-parameter families. The definitions and properties of these types of families  appear in Section~\ref{sec:linear}.
\vskip 2mm

Concerning a threshold for the connectivity of the attractor of a one-parameter affine family $F_t$, the following
facts are the subject of Section~\ref{sec:connected}.
\begin{itemize}
\item If the attractor $A$ of an affine IFS $F$ is not connected, then $A$ must have infinitely many components.  If there are exactly two functions in $F$ and $A$ is not connected,
then $A$ must be  totally disconnected; this, however, is not true in general for an IFS with more than two functions.
\item The attractor $A_t$ of a one-parameter affine family may be disconnected for all $t \in [0,t_0)$. 
\item  However, for a   one-parameter similarity family $F_t$, there exists a real number $\overline{t_1} > 0$ 
such that $A_t$ is connected for all $t \in ( \overline{t_1}, t_0)$ (Theorem~\ref{thm:sim}).
\item The attractor  $A_t$ of any one-parameter quasi-linear family $F_t$ is connected for all $t \in [0,t_0)$.
\item However, for any affine, but not quasi-linear, one-parameter family $F_t$, there exists a real number $\widehat{t_1} >0$ 
such that $A_t$ is disconnected for all $t\in [0,\widehat{t_1})$ (Theorem~\ref{thm:aff2}).
\item Even if $F_t$ is a one-parameter similarity family, there may be no single threshold $t_1$ such that $A_t$ is disconnected for all $t \in (0,t_1)$ and is connected for all $t\in (t_1,t_0)$.
\end{itemize}

In Section~\ref{sec:connected}, however, a slightly weaker notion of connectivity, which we call {\it weak connectivity} (Definition~\ref{def:wc}), is introduced that satisfies 
the following stronger threshold property (Theorem~\ref{thm:wc}):
\begin{itemize}
\item  For a  semi-linear, but not linear, one-parameter affine family $F_t$,  there is a threshold $t_1>0$ such that $A_t$ is weakly connected 
for all $t \in (t_1, t_0)$ and 
strongly disconnected for all $t\in [0,t_1)$.
\end{itemize} 

A threshold for the appearance of non-empty interior in the attractor $A_t$ of  a one-parameter affine family $F_t$
is the subject of Section~\ref{sec:interior}.  The following facts are proved in that section.
\begin{itemize}
\item For a degenerate one-parameter affine family $F_t$, the attractor trivially has empty interior for all $t\in [0,t_0)$.  This is even the case for some
non-degenerate similarity families (Example~\ref{ex:allempty} and Theorem~\ref{thm:empty2}).
\item  For a one-parameter affine family $F_t$, there is a real number $\tau >0$ such that $A_t$ has empty interior for all 
$t\in (0,\tau)$ (Theorem~\ref{thm:empty}).
\item For  a  tame (Definition~\ref{def:tame}) and semi-linear similarity family $F_t$,
 there is a real number $t_2 > 0$ such that  $A_t$  has empty interior for $t\in [0,t_2)$ and non-empty interior for 
$t\in (t_2,t_0 )$.  (We have no example of a semi-linear similarity IFS family that is not tame.)
\item Even when $F_t$ is not tame, Theorem~\ref{thm:nonempty} and Corollary~\ref{cor:nonempty} provide 
sufficient conditions under which there exists $\tau <t_0$ such that  $A_t$ has non-empty for 
$t\in (\tau,t_0 )$.  
\end{itemize}

\section{Threshold $t_0$ for the Existence of an Attractor} \label{sec:existence}

Concerning a threshold for the existence of an attractor of the one-parameter affine family, Corollary~\ref{cor:existence} below, gives a complete answer. 
The terminology is as follows.  An IFS $F$ is {\it contractive} if $f$ is a 
contraction for all $f \in F$
with respect to some metric that is  Lipschitz equivalent to the standard Euclidean metric.  An IFS $F$ is 
{\it topologically attractive} if there exists a compact set $K \in \mathbb H$ such that  $F(K) \subset K^o$, where
$K^o$ denotes the interior of $K$.  A {\it convex body} is a convex set with non-empty interior.  

The {\it joint 
spectral radius} $\rho(F)$ of an IFS $F$ is the joint spectral radius of the set of linear parts of the functions in $F$. The joint 
spectral radius of a set ${\mathbb{L}}=\{L_{i},\,i\in I\}$ of linear
maps was introduced by Rota and Strang \cite{RS} and the generalized spectral
radius by Daubechies and Lagarias \cite{DL}. Berger and Wang \cite{BW} proved
that the two concepts coincide for, in particular, a finite set of maps.  What follows is the definition of
the joint spectral radius of ${\mathbb{L}}=\{L_{1}, L_2, \dots, L_N\}$.  Let $\Omega_{k} $ be the set of
all words $i_{1}\,i_{2}\,\cdots\,i_{k}$, of length $k$, where $i_{j}\in
\{1,2,\dots, N\}\,\,1\leq j\leq k$. For $\sigma=i_{1}\,i_{2}\,\cdots\,i_{k}\in\Omega_{k}$,
define
\[
L_{\sigma}:=L_{i_{1}}\circ L_{i_{2}}\circ\cdots\circ L_{i_{k}}.
\]
For a linear map $L$, let $\rho(L)$ denote the ordinary spectral radius, i.e.,
the maximum of the moduli of the eigenvalues of $L$. 
The {\it joint
spectral radius} of ${\mathbb{L}}$ is
\[
\hat{\rho}=\hat{\rho}({\mathbb{L}}):=\limsup_{k\rightarrow\infty}\hat{\rho
}_{k}^{1/k}\qquad\mbox {where}\qquad\hat{\rho}_{k}:=\sup_{\sigma\in\Omega_{k}}\,\|L_{\sigma}\|,
\]
which does not depend on the chosen matrix norm.  
The {\it generalized spectral radius} of ${\mathbb{L}}$ is
\[
\rho=\rho({\mathbb{L}}):=\limsup_{k\rightarrow\infty}\rho_{k}^{1/k}%
\qquad\mbox {where}\qquad\rho_{k}:=\sup_{\sigma\in\Omega_{k}}\,\rho(L_{\sigma
}).
\] 
Our Theorem~\ref{thm:jsr} below generalizes and extends a fundamental 
result of Hutchinson \cite{hutchinson}.  
 
\begin{theorem}[\cite{BV1} Theorem 4] \label{thm:jsr} The following are equivalent for an affine IFS $F$.
\begin{enumerate}
\item $F$ has an attractor.
\item $F$ is contractive.
\item $F$ is topologically attractive with respect some convex body $K$.
\item The joint spectral radius $\rho(F) < 1$.
\end{enumerate}
\end{theorem}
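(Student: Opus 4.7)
The plan is to prove the cyclic chain $(2)\Rightarrow(1)\Rightarrow(4)\Rightarrow(3)\Rightarrow(2)$. The classical Hutchinson direction $(2)\Rightarrow(1)$ is the easiest: if $F$ is contractive in a metric Lipschitz equivalent to the Euclidean one, the Hutchinson operator is a contraction on the complete space $(\mathbb H(\R^d),d_H)$ in the corresponding Hausdorff metric, and Banach's fixed point theorem delivers a unique attractor satisfying \eqref{eq:lim}. For $(3)\Rightarrow(2)$, I would translate coordinates so that $0\in K^o$ and work with the symmetrized Minkowski gauge $\|x\|:=\max(\|x\|_K,\|-x\|_K)$, where $\|x\|_K=\inf\{s>0:x\in sK\}$; this is a norm Lipschitz equivalent to the Euclidean one. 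Compactness and convexity of $K$ together with $F(K)\subset K^o$ force $F(K)\subset sK$ for some $s<1$ (since the gauge is continuous and attains its max on the compact set $F(K)$), from which each $f\in F$ is a strict contraction by factor $s$ in the new norm.

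The main step, and the one I expect to be most delicate, is $(4)\Rightarrow(3)$. Let $\mathbb L=\{L_1,\dots,L_N\}$ be the linear parts. With $\rho(\mathbb L)<1$, I would invoke the existence of an \emph{extremal} norm (in the tradition of Rota--Strang, Barabanov, and Protasov): a norm $\|\cdot\|_*$ whose symmetric unit ball $B_*$ is a convex body satisfying $L_i(B_*)\subset r B_*$ for every $i$ and some fixed $r\in(\rho(\mathbb L),1)$. To absorb the translational parts $a_1,\dots,a_N$, I rescale by a large $\lambda>0$ and set $K:=\lambda B_*$, so that
\[
f_i(K)=L_i(\lambda B_*)+a_i\subset r\lambda B_*+a_i.
\]
Choosing $\lambda$ large enough that $r\lambda B_* + a_i\subset \lambda B_*^o$ for every $i$ is possible because the margin $(1-r)\lambda$ grows without bound while the $a_i$ stay fixed, which yields $F(K)\subset K^o$ as required for (3).

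Finally, for $(1)\Rightarrow(4)$ I would argue by contrapositive. If $\rho(\mathbb L)>1$, the Berger--Wang theorem cited above furnishes a word $\sigma$ with $\rho(L_\sigma)>1$, so $L_\sigma$ has an eigenvalue of modulus strictly above $1$; picking $x,y\in K$ whose difference has nonzero projection on the expanding eigenspace makes $|f_\sigma^n(x)-f_\sigma^n(y)|=|L_\sigma^n(x-y)|$ unbounded, and since $F^{n|\sigma|}(K)\supset f_\sigma^n(K)$ the iterates $F^m(K)$ cannot have a Hausdorff limit, contradicting (1). The borderline case $\rho(\mathbb L)=1$ is the subtle one I anticipate as the main obstacle: I would use that any attractor $A$ is compact and satisfies $f_\sigma(A)\subset A$ for every word $\sigma$, hence the family $\{L_\sigma\}$ is uniformly bounded on the linear span of $A-A$; combining this with a Berger--Wang-type characterization of $\rho$ as a limsup of $\rho(L_\sigma)^{1/|\sigma|}$ forces $\rho(\mathbb L)<1$, contradicting the assumption $\rho(\mathbb L)=1$ and completing the chain.
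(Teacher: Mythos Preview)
The paper does not prove Theorem~\ref{thm:jsr}; it is quoted from \cite{BV1} and used as a black box, so there is no in-text argument to compare against. Judging your sketch on its own merits, $(2)\Rightarrow(1)$ and $(4)\Rightarrow(3)$ are fine, but two steps need repair.

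In $(3)\Rightarrow(2)$ your symmetrization is the wrong one. From $f_i(K)\subset sK$ you obtain only $L_i(K)\subset sK-a_i$, and it does not follow that $L_i$ maps the unit ball $K\cap(-K)$ of your norm into $s(K\cap(-K))$. The standard fix is to use the \emph{difference body}: $L_i(K-K)=L_i(K)-L_i(K)\subset (sK-a_i)-(sK-a_i)=s(K-K)$, so the Minkowski gauge of the symmetric convex body $K-K$ is a norm in which every $L_i$ has operator norm at most $s$.

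The substantive gap is $(1)\Rightarrow(4)$ at $\rho(\mathbb L)=1$. Uniform boundedness of $\{L_\sigma\}$ on $\mathrm{span}(A-A)$ yields only $\rho\le 1$, never $\rho<1$; a single rotation already gives a bounded semigroup with $\rho=1$, so ``bounded $\Rightarrow\rho<1$'' is simply false. You must exploit that $F^n(K)\to A$ for \emph{every} compact $K$, not just for subsets of $A$. Take $K_R=A+\bar B_R$: for $a\in A$ and a unit vector $v$ with $|L_\sigma v|=\|L_\sigma\|$, the point $f_\sigma(a+Rv)=f_\sigma(a)+RL_\sigma v$ lies in $F^{|\sigma|}(K_R)$, and since $f_\sigma(a)\in A$ it sits at distance at least $R\|L_\sigma\|-\mathrm{diam}(A)$ from $A$. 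Because $d_H(F^n(K_R),A)\to 0$, this forces $\limsup_n\hat\rho_n\le\mathrm{diam}(A)/R$, and letting $R\to\infty$ gives $\hat\rho_n=\max_{|\sigma|=n}\|L_\sigma\|\to 0$. Submultiplicativity (Fekete) then yields $\rho=\lim\hat\rho_n^{1/n}\le\hat\rho_N^{1/N}<1$ once some $\hat\rho_N<1$. Without this step your contrapositive does not close.
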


\begin{cor} \label{cor:existence} Given a one-parameter affine family $F_t$, an attractor $A_t$ exists for $t < 1/\rho(F)$ and fails to exist for $t \geq 1/\rho(F)$.   In other words, the threshold for the existence of the attractor is $t_0 = 1/\rho(F)$.
\end{cor}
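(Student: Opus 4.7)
The plan is to reduce the corollary directly to Theorem~\ref{thm:jsr}, specifically the equivalence (1) $\Leftrightarrow$ (4), which asserts that an affine IFS has an attractor if and only if its joint spectral radius is strictly less than $1$. All that is needed is the scaling identity $\rho(F_t) = t\,\rho(F)$; once this is in hand, the threshold $t_0 = 1/\rho(F)$ appears as the point where $\rho(F_t)$ crosses $1$.

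First I would identify the linear parts of the functions in $F_t$. Writing each $f_i$ as $f_i(x) = L_i x + a_i$, one has $t f_i(x) + q_i = (tL_i)x + (t a_i + q_i)$, so the linear part of the $i$-th map in $F_t$ is $tL_i$. For $t>0$ these maps remain non-singular, so $F_t$ qualifies as an affine IFS in the sense of Section~\ref{sec:existence}.

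Next I would verify the homogeneity $\rho(F_t) = t\,\rho(F)$ directly from the definition. For any word $\sigma = i_1\cdots i_k \in \Omega_k$, the composition of the scaled linear parts is $(tL_{i_1})\circ\cdots\circ(tL_{i_k}) = t^k L_\sigma$, hence for any matrix norm $\hat{\rho}_k(F_t) = \sup_{\sigma \in \Omega_k} \|t^k L_\sigma\| = t^k\,\hat{\rho}_k(F)$. Taking $k$-th roots gives $\hat{\rho}_k(F_t)^{1/k} = t\,\hat{\rho}_k(F)^{1/k}$, and passing to the limsup yields $\rho(F_t) = t\,\rho(F)$.

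Combining these two observations with Theorem~\ref{thm:jsr} gives the corollary: for $t < 1/\rho(F)$ one has $\rho(F_t) < 1$ and $F_t$ has a unique attractor; for $t \geq 1/\rho(F)$ one has $\rho(F_t) \geq 1$ and $F_t$ has none. The degenerate case $\rho(F) = 0$ is handled by the convention $1/\rho(F) = \infty$, under which the attractor exists for every $t \geq 0$. There is no real obstacle in this argument: all of the substantive analytic content is packaged inside Theorem~\ref{thm:jsr}, and the corollary reduces to the elementary fact that the joint spectral radius is positively homogeneous of degree one in its arguments.
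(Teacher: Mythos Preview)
Your proof is correct and follows essentially the same route as the paper: identify the linear parts of $F_t$ as $tL_i$, invoke the homogeneity $\rho(F_t)=t\,\rho(F)$, and apply the equivalence (1)$\Leftrightarrow$(4) of Theorem~\ref{thm:jsr}. You supply a bit more detail (deriving the homogeneity from the definition and noting the degenerate case $\rho(F)=0$, which in fact cannot occur here since each $L_i$ is non-singular), but the argument is the same.
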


\begin{proof} If the linear parts of the affine functions in $F$ are $\{B_1, B_2, \dots, B_N\}$, then by 
Theorem~\ref{thm:jsr}
an attractor of $F_t$ exists if $t\rho(F) =  t \rho(B_1, B_2, \dots, B_N) =   \rho(tB_1, tB_2, \dots, tB_N)  =
\rho(F_t) <1$,
 i.e., if $t < 1/\rho(F)$.  And an attract fails to exist if $F_t$ exists if $t\rho(F) =  t \rho(B_1, B_2, \dots, B_N) =   \rho(tB_1, tB_2, \dots, tB_N)  =
\rho(F_t) = >1$, i.e., if $t > 1/\rho(F)$.
\end{proof}  

It is known that the joint spectral radius is NP-hard to compute or to approximate.   Moreover, the question of whether $\rho < 1$  is an undecidable problem.  
However, for an IFS consisting of similarity transformations, the joint 
spectral radius is easily verified to be the maximum of the scaling ratios of the set of similarities in the IFS $F$.  

\begin{cor} \label{cor:existence2}  Let $F_t = \{  t\, f_i(x) + q_i \, : 1\leq i \leq N \}$ be a one-parameter similarity family and  
$F = \{f_1, f_2, \dots, f_N\}$.  If the scaling ratio of the similarities in $F$ are
$r_1, r_2, \dots, r_N$, the threshold for the existence of an attractor of $F_t$ is  $t _0 = 1/( \max_{1\leq i \leq N} r_i)$.
\end{cor}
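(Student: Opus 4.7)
The plan is to reduce the statement to Corollary~\ref{cor:existence} by showing that, for a similarity IFS, the joint spectral radius coincides with the maximum scaling ratio; this identity is alluded to in the paragraph preceding the corollary, and my job is to verify it cleanly.

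First I would invoke Corollary~\ref{cor:existence} to obtain the threshold $t_0 = 1/\rho(F)$. So the whole task reduces to proving $\rho(F) = \max_{1 \leq i \leq N} r_i$. Since each $f_i$ is a similarity with scaling ratio $r_i$, its linear part can be written as $L_i = r_i O_i$ where $O_i$ is an orthogonal matrix (an isometry). For any word $\sigma = i_1 i_2 \cdots i_k \in \Omega_k$, composition gives
\[
L_\sigma = (r_{i_1} O_{i_1})(r_{i_2} O_{i_2}) \cdots (r_{i_k} O_{i_k}) = \Bigl(\prod_{j=1}^k r_{i_j}\Bigr) O_{i_1} O_{i_2} \cdots O_{i_k}.
\]
Since a product of orthogonal matrices is orthogonal, and every orthogonal matrix has operator norm equal to $1$, I get $\|L_\sigma\| = \prod_{j=1}^k r_{i_j}$ when $\|\cdot\|$ is the standard operator norm induced by the Euclidean norm (and the joint spectral radius is independent of the choice of norm).

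Next I would take the supremum over $\sigma \in \Omega_k$. Writing $r_{\max} := \max_{1 \leq i \leq N} r_i$, the supremum of $\prod_{j=1}^k r_{i_j}$ over all length-$k$ words is clearly $r_{\max}^k$, attained by the constant word $i^*\cdots i^*$ with $r_{i^*} = r_{\max}$. Hence $\hat\rho_k = r_{\max}^k$, and therefore
\[
\rho(F) = \hat\rho(\{L_1,\dots,L_N\}) = \limsup_{k\to\infty} \hat\rho_k^{1/k} = \limsup_{k\to\infty} r_{\max} = r_{\max}.
\]
Combining this with Corollary~\ref{cor:existence} yields $t_0 = 1/r_{\max} = 1/(\max_{1\leq i\leq N} r_i)$, as claimed.

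There is no real obstacle here; the only slightly delicate point is being explicit that orthogonality forces $\|L_\sigma\|$ to be exactly the product of the scalars (rather than merely bounded by it), which is what prevents cancellation between different terms in the sup and makes the joint spectral radius collapse to the naive maximum. Everything else is a direct calculation feeding into the already-established Corollary~\ref{cor:existence}.
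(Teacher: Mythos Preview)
Your proof is correct and follows exactly the approach the paper intends: the paper does not give a separate proof of this corollary but simply remarks that ``for an IFS consisting of similarity transformations, the joint spectral radius is easily verified to be the maximum of the scaling ratios,'' and you have supplied precisely that verification. Your computation of $\hat\rho_k = r_{\max}^k$ via $L_\sigma = (\prod r_{i_j})O_\sigma$ with $O_\sigma$ orthogonal is the natural and essentially only way to do this, so there is no meaningful difference from the paper's (implicit) argument.
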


The following theorem is relevant to results in Section~\ref{sec:transition}.

\begin{theorem}  \label{thm:continuity}  For the one-parameter affine family $F_t$, the function $t\mapsto A_t$ is continuous on the interval $(0,t_0)$. 
\end{theorem}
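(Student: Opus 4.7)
The plan is to combine the standard parameter-dependence argument for Banach fixed points with the observation that some iterate $F_t^k$ of the Hutchinson operator is uniformly contractive on $(\mathbb{H}(\R^d), d_H)$ for $t$ ranging over a neighborhood of any given $t^* \in (0, t_0)$.

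First I would fix $t^* \in (0, t_0)$ and pick $\epsilon>0$ small enough that $T := t^* + \epsilon < t_0$, working on the compact interval $I := [\max\{0, t^* - \epsilon\}, T]$. Since $\hat{\rho}(F_T) = T\rho(F) < 1$ and the joint spectral radius equals $\limsup_k \hat{\rho}_k^{1/k}$ in every matrix norm, one can choose $k$ so large that $\lambda := \hat{\rho}_k(F_T) < 1$ in the Euclidean operator norm. Because the linear parts of $F_t$ are obtained from those of $F$ by the common scalar factor $t$, it follows that $\hat{\rho}_k(F_t) = t^k \hat{\rho}_k(F) \le \lambda$ for every $t \in I$. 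Consequently every length-$k$ word in $F_t$ is an affine map with Euclidean Lipschitz constant at most $\lambda$, so the Hutchinson operator $F_t^k$ is a $\lambda$-contraction of $(\mathbb{H}(\R^d), d_H)$ with unique fixed point $A_t$, uniformly in $t \in I$.

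Next I would run the standard parameter-dependence estimate: for any $t \in I$,
\[
d_H(A_t, A_{t^*}) = d_H\bigl(F_t^k(A_t),\, F_{t^*}^k(A_{t^*})\bigr) \le \lambda\, d_H(A_t, A_{t^*}) + d_H\bigl(F_t^k(A_{t^*}),\, F_{t^*}^k(A_{t^*})\bigr),
\]
so that $d_H(A_t, A_{t^*}) \le (1-\lambda)^{-1} d_H\bigl(F_t^k(A_{t^*}),\, F_{t^*}^k(A_{t^*})\bigr)$. The map $t \mapsto F_t^k(A_{t^*})$ is continuous in the Hausdorff metric, since the $k$-fold affine compositions appearing in $F_t$ have coefficients that are polynomials in $t$ and hence converge uniformly on the compact set $A_{t^*}$ as $t \to t^*$, while a finite union of continuously varying compacta depends continuously on the parameter. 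Hence the right-hand side tends to $0$, giving continuity of $t \mapsto A_t$ at $t^*$.

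The only real subtlety I anticipate is producing a contraction ratio uniform in $t$: for a single $t$, Theorem~\ref{thm:jsr} immediately gives some equivalent norm in which $F_t$ is a contraction, but that norm depends on $t$. Extremal or Barabanov norms would handle this directly, but passing to a high iterate avoids that machinery entirely, since the very definition $\hat{\rho}(F_T) = \limsup_k \hat{\rho}_k(F_T)^{1/k}$ already guarantees that $F_t^k$ is a Euclidean contraction for some fixed $k$, uniformly on $I$. Once this uniform contraction is in hand, the remainder is a textbook application of the contraction mapping principle with a continuous parameter.
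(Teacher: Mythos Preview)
Your proposal is correct and takes a genuinely different route from the paper. The paper's argument invokes Theorem~\ref{thm:jsr} to obtain a single Lipschitz-equivalent metric in which the maps of $F_t$ are contractions, restricts to the convex body $K$, and then appeals to a black-box result from Barnsley's book (\cite[Theorem~11.1]{MB}) which reduces continuity of $t\mapsto A_t$ to the elementary pointwise Lipschitz estimate $d(f_t(x),f_{t'}(x))\le c\,|t-t'|$ on $K$. You instead bypass both the auxiliary metric and the external citation by passing to the $k$-th iterate so that the Hutchinson operator becomes a Euclidean $\lambda$-contraction \emph{uniformly} on a neighborhood of $t^*$, and then carry out the Banach-fixed-point-with-parameter estimate by hand. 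Your version is more self-contained and makes the uniformity in $t$ explicit---a point the paper's proof leaves somewhat implicit, since the metric furnished by Theorem~\ref{thm:jsr} in principle depends on $t$---while the paper's argument is shorter once the cited result is granted. Both ultimately rest on the same observation that $\rho(F_t)<1$ gives contractivity with room to spare near any $t^*<t_0$.
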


\begin{proof}   By Theorem~\ref{thm:jsr}, all the functions in $F$ are contractions with respect to a metric 
$d(\cdot, \cdot)$ that is Lipschitz equivalent to the standard Euclidean metric.  So there are constants
$c_1>0, \, c_2 >0$ such that $c_1 |x-y| \leq d(x,y) \leq c_2|x-y|$ for all $x,y \in \R^d$.  Restrict our  metric to the
convex body  $K$ as defined in Theorem~\ref{thm:jsr}.  According to \cite[Theorem 11.1]{MB}, it is sufficient to show that
$d(f_t(x), f_{t'}(x)) \leq c |t-t'|$ for some constant $c$ independent of $t, t', x\in K$, and $f\in F$.
Let $c_3$ be a constant such that $|f(x)| \leq c_3$ for all $x\in K$ and all $f\in F$.  Then
\[d(f_t(x), f_{t'}(x)) \leq c_2 |f_t(x) - f_{t'}(x)| = c_2|(t-t')| \, | f(x)|  \leq c_2 c_3 |t-t'|.\]
\end{proof}

\begin{example}[$t\mapsto A_t$ is Continuous on the Interval $(0,t_0)$ but not Uniformly Continuous]  \label{ex:I} 
In general, the continuity guaranteed by Theorem~\ref{thm:continuity} is not uniform continuity.  For example, for the $1$-dimensional 
one-parameter family
$F_t = \{ -tx+t+1, \, -tx -t -1\}$ the attractor is the closed interval  \[A_t = \Bigg [ -\frac{1+t}{1-t}, \frac{1+t}{1-t} \Bigg ],\]  
whose length goes to infinity as $t$ approaches $t_0=1$.  Consequently continuity is not uniform.
This question of when continuity is uniform arises in Section~\ref{sec:transition}.  
\end{example}

\begin{remark} \label{rem:noloss}
Given an IFS family $F_t$, let $F'_t = t_0\, F_t$.  The one-parameter affine family $F'_t$, as $t$ varies between $0$ and $1$, is the same as the family $F_t$ as $t$ varies
between $0$ and $t_0$.  Therefore there is no loss of generality in assuming that $t_0 = 1$.
\end{remark}

\section{Linear, Quasi-Linear, and Semi-Linear Families} \label{sec:linear}

\begin{proposition}  \label{prop:fixed}  Let $F_t$ be a one-parameter affine  family.  If $t\in [0,t_0)$, then every function in $F_t$ has a unique fixed point, and that fixed point is contained in the attractor $A_t$.  
\end{proposition}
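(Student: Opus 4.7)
The plan is to exploit the equivalence in Theorem~\ref{thm:jsr} together with a Banach fixed point argument, then run the iterates of a single function inside the attractor to conclude that the fixed point lies in $A_t$.

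First, I would show each map in $F_t$ is a strict contraction. Since $t<t_0=1/\rho(F)$, we have $\rho(F_t)=t\rho(F)<1$, so by Theorem~\ref{thm:jsr} the IFS $F_t$ is contractive: there is a metric $d$ on $\R^d$, Lipschitz equivalent to the Euclidean metric, with respect to which every $g_i\in F_t$ is a contraction. The Banach fixed point theorem then immediately gives that each $g_i$ has a unique fixed point $x_i^\ast\in\R^d$, and the uniqueness is independent of the metric since the fixed point condition $g_i(x)=x$ is metric-free.

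Next, I would show $x_i^\ast\in A_t$. Because $A_t$ is an attractor, it satisfies the fixed-point equation \eqref{eq:fixed}, so $A_t=F_t(A_t)=\bigcup_{j}g_j(A_t)$, and in particular $g_i(A_t)\subseteq A_t$ for the chosen index $i$. Pick any $x\in A_t$ (which is nonempty and compact). Then by induction $g_i^k(x)\in A_t$ for every $k\geq 0$. Since $g_i$ is a contraction with respect to $d$, Banach's theorem gives $d(g_i^k(x),x_i^\ast)\to 0$, hence also $|g_i^k(x)-x_i^\ast|\to 0$ by Lipschitz equivalence. Since $A_t$ is closed, the limit $x_i^\ast$ belongs to $A_t$.

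There is really no hard obstacle here; the only subtlety is to note that the contractivity comes from the joint spectral radius condition $t\rho(F)<1$ applied via Theorem~\ref{thm:jsr}, rather than by trying to argue directly with the eigenvalues of the individual linear parts $tL_i$ (though this would also work, since $\rho(L_i)\leq\rho(F)$ forces $I-tL_i$ to be invertible and hence $g_i(x)=tL_ix+(ta_i+q_i)$ to have the unique fixed point $(I-tL_i)^{-1}(ta_i+q_i)$). I would state the argument via Theorem~\ref{thm:jsr} since it simultaneously delivers uniqueness and the contractive iteration needed to place $x_i^\ast$ in $A_t$.
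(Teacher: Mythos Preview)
Your proof is correct. For the uniqueness part, you invoke Theorem~\ref{thm:jsr} to get a metric in which every $g_i\in F_t$ is a contraction and then apply Banach; the paper instead argues directly at the linear-algebra level, observing that $x$ is a fixed point of $tf_i(x)+q_i$ iff $(\tfrac{1}{t}I-L_i)x=a_i+\tfrac{q_i}{t}$, and that $\tfrac{1}{t}$ cannot be an eigenvalue of $L_i$ since $\rho(L_i)\le\rho(F)=1/t_0<1/t$. You in fact mention this alternative yourself. Your route has the advantage of simultaneously supplying the contraction used in the second step, while the paper's route makes the invertibility of $I-tL_i$ explicit and avoids the appeal to the equivalent metric. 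For the inclusion $x_i^\ast\in A_t$, both arguments are essentially the same idea (iterate inside the attractor and use closedness); your version, starting from an arbitrary point of $A_t$ and using $g_i(A_t)\subseteq A_t$, is slightly more detailed than the paper's one-line remark that $x=\lim_n f^n(x)\in A$.
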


\begin{proof}   Let $f_t = t f(x) +q \in F_t$.  A point $x$ is a fixed
point of $f_t$ if and only if $(\frac{1}{t}-L)x = a +\frac{q}{t}$, where $L$ is the linear part of $f$ and $a$ is the translational part.  Therefore $f_t$ has a unique 
fixed point unless $1/t$ is an eigenvalue of $L$.  If  $1/t$ is an eigenvalue of $L$, however, then by the definition of the generalized spectral radius \[ 1/t_0 = \rho(F) \geq 1/t,\]
which implies $t \geq t_0$, a contradiction. 

If $x$ is a fixed point of a function $f$ of an IFS with attractor $A$, then $x = \lim_{n\rightarrow \infty} f^n(x) \in A$.
\end{proof}

Two functions $f, g: \R^d \rightarrow \R^d$ are {\it conjugate by a function} $h$ if $g = h^{-1}\, f\, h$.  

\begin{definition}\label{def:items}  Let $F_t  = \{ f_{(i,t)}(x)  := t\, f_i(x) + q_i \, : 1\leq i \leq N \}$ be a one-parameter affine family. 
\begin{itemize}
\item  Call $F_t$ {\bf linear} if, for all $i = 1,2, \dots, N$ and for all  $t\in [0,t_0)$, the function $f_{(i,t)}$ is conjugate to a linear function by a by a function $h$ that is independent of $i$ and $t$.  
\item Call  $F_t$ {\bf quasi-linear}  if, for all $i = 1,2, \dots, N$ and for all  $t\in [0,t_0)$, the function $f_{(i,t)}$ is conjugate  to a linear function by a by a function $h_t$ that is independent of $i$.    
\item Call $F_t$ {\bf semi-linear}  if,  for all $i = 1,2, \dots, N$ and for all  $t\in [0,t_0)$, the function $f_{(i,t)}$ is conjugate  to a linear function by a by a function $h_i$ 
that is independent of $t$. 
\end{itemize}
\end{definition}

It follows from the definitions that a linear family is necessarily quasi-linear and semi-linear.  Converse statement do not hold.   By Propositions~\ref{prop:linear} and \ref{prop:quasi} below, the attractor $A_t$ for a linear or quasi-linear family $F_t$ is a single point for all  $t\in [0,t_0)$. This is not the case for a semi-linear family.  
 The following two lemmas are easily verified.

\begin{lemma}  \label{prop:props} Let $f : \R^d\rightarrow \R^d$ be an affine function, $q \in \R^d$, and $t> 0$.    The following statements are equivalent for 
$g(x) = t\, f(x) +q$:
\begin{enumerate}
\item $q$ is a fixed point of $g$; 
\item  $f(q) = 0$; 
\item  $g(x) =t\, L(x-q)+q$, where $L$ is the linear part of $f$. 
\end{enumerate}
\end{lemma}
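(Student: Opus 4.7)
The plan is to open by writing the affine function explicitly as $f(x) = Lx + a$, where $L$ is the linear part and $a\in\R^d$ the translational part, so that $g(x) = t f(x) + q = tLx + ta + q$. With this expression in hand, each of the three conditions becomes a simple identity that can be rearranged into the others, so the proof reduces to a cycle of implications (1)$\Rightarrow$(2)$\Rightarrow$(3)$\Rightarrow$(1).

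For (1)$\Leftrightarrow$(2), I would compute $g(q) = tLq + ta + q = t\,f(q) + q$. Setting this equal to $q$ gives $t\,f(q) = 0$, and since $t > 0$, this is equivalent to $f(q) = 0$. The only subtle point, though essentially trivial, is invoking $t > 0$ to cancel the scalar; this is the one place the hypothesis $t > 0$ in the lemma statement is used.

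For (2)$\Rightarrow$(3), assume $f(q) = Lq + a = 0$, so that $a = -Lq$. Substituting into $g(x) = tLx + ta + q$ yields $g(x) = tLx - tLq + q = tL(x-q) + q$, which is exactly the form in (3). For (3)$\Rightarrow$(1), simply evaluate the formula in (3) at $x = q$ to obtain $g(q) = tL(0) + q = q$, which is (1). This closes the cycle.

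No step here is a genuine obstacle; the lemma is a routine unwinding of the affine formula. The only thing worth flagging in the write-up is the use of $t > 0$ in the (1)$\Leftrightarrow$(2) direction and the observation that $L$ in (3) is unambiguously the linear part of $f$ (not of $g$), whose linear part is $tL$.
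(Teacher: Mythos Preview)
Your proof is correct and is exactly the routine verification the paper has in mind; the paper itself omits the argument, noting only that the lemma is ``easily verified.'' Your cycle (1)$\Rightarrow$(2)$\Rightarrow$(3)$\Rightarrow$(1), with the use of $t>0$ to cancel in the first step, is precisely what is needed.
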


\begin{lemma} \label{lem:trans} For an affine function $f = g^{-1}\, L_1\, g$, where $g(x) = L_2(x) + a$ and $L_1$ and $L_2$ are linear, there is
a linear function $L_3$ such that $f = h^{-1}\, L_1\, h$, where $h(x) = x+a$.
\end{lemma}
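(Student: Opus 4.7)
The plan is to factor the conjugating map $g$ as a linear map composed with a translation, and then absorb the linear factor into $L_1$. This reduces the affine conjugacy $f = g^{-1} L_1 g$ to a conjugacy by the translation alone, at the price of replacing $L_1$ by a conjugate linear map $L_3$.

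Explicitly, since $g(x) = L_2 x + a = L_2(x + L_2^{-1} a)$, one writes $g = L_2 \circ h$, where $h(x) = x + L_2^{-1} a$ is a translation. Substituting into the hypothesis,
\[ f \;=\; g^{-1} L_1 g \;=\; (L_2 h)^{-1} L_1 (L_2 h) \;=\; h^{-1} \bigl( L_2^{-1} L_1 L_2 \bigr)\, h, \]
so the desired linear map is $L_3 := L_2^{-1} L_1 L_2$ and $f = h^{-1} L_3 h$ for the translation $h$.

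There is no real obstacle; this is essentially a one-line calculation, which is presumably why the author flags the lemma as ``easily verified.'' The only conceptual content is that the linear part $L_2$ of the conjugating map $g$ commutes through the conjugation formula and merely twists $L_1$ into $L_2^{-1} L_1 L_2$, so that only the translational part of $g$ remains to do the coordinate-change work. Equivalently, one can arrive at the same $h$ by first locating the unique fixed point of $f$ (which is $g^{-1}(0) = -L_2^{-1} a$, cf.\ Proposition~\ref{prop:fixed}) and then translating it to the origin, at which point Lemma~\ref{prop:props}(3) immediately rewrites $f$ in the form $L_3(x-p)+p$, i.e.\ as $h^{-1} L_3 h$.
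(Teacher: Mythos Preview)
The paper supplies no proof for this lemma beyond the remark that it is ``easily verified,'' so there is no argument to compare against; your one-line factorization $g = L_2 \circ h$ with $h(x) = x + L_2^{-1}a$, yielding $f = h^{-1} L_3 h$ for $L_3 = L_2^{-1} L_1 L_2$, is exactly the intended verification and is correct.

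One small observation worth making explicit: your translation is $h(x) = x + L_2^{-1}a$, whereas the lemma as printed writes $h(x) = x + a$ and has $L_1$ in place of $L_3$ in the conclusion. These look like typos in the paper --- taken literally, $f = h^{-1} L_1 h$ with $h(x) = x + a$ would force $(L_1 - I)(L_2 - I)a = 0$, which is false in general. Your formulation is the correct one, and it is all that is used downstream: in the proof of Proposition~\ref{prop:linear} the only fact drawn from this lemma is that the conjugating map may be taken to be \emph{some} translation.
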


\begin{proposition}  \label{prop:linear} The following statements are equivalent for a one-parameter affine family $F_t  = \{ f_{(i,t)}(x)  := t\, f_i(x) + q_i \, : 1\leq i \leq N \}$.  
\begin{enumerate}
\item   $F_t$ is linear.
\item   There is a point $q$ such that $q=q_i$ for all $i=1,2,\dots, N$, and $q$ is the unique fixed point of $f_{(i,t)}$  for all $i$ and all $t \in [0,t_0)$.  
\end{enumerate}
Moreover, for a linear family the attractor $A_t$ of $F_t$ is the single point $q$.  
\end{proposition}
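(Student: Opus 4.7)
The plan is to prove the two implications separately, with the identification of the attractor following almost immediately. For $(2) \Rightarrow (1)$, Lemma~\ref{prop:props} directly rewrites each $f_{(i,t)}(x) = t L_i(x-q) + q$, where $L_i$ is the linear part of $f_i$. The single translation $h(x) = x-q$ then conjugates every $f_{(i,t)}$ to the linear map $y \mapsto tL_i y$, and this $h$ is manifestly independent of $i$ and $t$, so $F_t$ is linear.

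For $(1) \Rightarrow (2)$, I would first invoke Lemma~\ref{lem:trans} to reduce to the case in which the common conjugating function is a translation $h(x) = x+a$, absorbing the linear part of the original $h$ into the target linear maps. Writing $f_{(i,t)} = h^{-1} \circ M_{i,t} \circ h$ with each $M_{i,t}$ linear, the identity $M_{i,t}(0) = 0$ shows that $-a$ is a fixed point of $f_{(i,t)}$; by Proposition~\ref{prop:fixed} it is the unique fixed point for all $t \in [0, t_0)$. Substituting $x = -a$ into $f_{(i,t)}(x) = t f_i(x) + q_i = x$ produces the relation $q_i + t f_i(-a) = -a$. Since this must hold on a nondegenerate interval of $t$, the left-hand side (affine in $t$) and the right-hand side (constant in $t$) force both $f_i(-a) = 0$ and $q_i = -a$ for every $i$. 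Setting $q := -a$ and applying Lemma~\ref{prop:props} in the reverse direction gives (2).

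The final assertion is then immediate: with $q = q_i$ a common fixed point of all $f_{(i,t)}$, one has $F_t(\{q\}) = \{q\}$, and the uniqueness clause of Equation~\eqref{eq:fixed} yields $A_t = \{q\}$. The one genuinely delicate step is the separation of the equation $q_i + t f_i(-a) = -a$ into its two components in $(1) \Rightarrow (2)$; it is precisely the $t$-independence of $h$ built into the definition of \emph{linear} that makes this separation legitimate. The remainder of the argument is routine bookkeeping through Lemmas~\ref{prop:props} and~\ref{lem:trans}.
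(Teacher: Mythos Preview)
Your proof is correct and follows essentially the same route as the paper's: both directions hinge on Lemma~\ref{lem:trans} to reduce the conjugating map to a translation, then use the fixed-point equation $q = t f_i(q) + q_i$ on an interval of $t$ to force $f_i(q)=0$ and $q_i=q$. The only minor difference is in the final assertion: the paper observes that a conjugate-linear IFS has attractor a single point (the origin, translated back to $q$), whereas you argue directly that $\{q\}$ is a compact fixed set of the Hutchinson operator and invoke the uniqueness in Equation~\eqref{eq:fixed}; both are valid and equally short.
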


\begin{proof}  (1) $\Rightarrow$ (2) Assume that  $L = h^{-1}\, f_{(i,t)} \, h$, where $L$ is linear.  By Lemma~\ref{lem:trans}, we can assume, without loss
of generality, that $h$ is a translation.  By Proposition~\ref{prop:fixed}, each function in $F_t, \, t\in [0,t_0),$ has a unique fixed point; call it $q_{(i,t)}$.  The unique fixed point of the linear map $h^{-1}\, f_{(i,t)} \, h$ is the origin; therefore $h(x) = x+q_{(i,t)}$.  By the definition of linear, $q_{(i,t)}$ is independent of $i$ and $t$; call it $q$.
Now $q = t\, f_i(q)+ q_i$ for all $t\in [0,t_0)$, which implies that $f_i(q)=0$ and hence $q = q_i$ for all $i$.   

(2) $\Rightarrow$ (1)  Let $h(x) = x+q$, and let $L_i$ denote the linear part of $f_i$.  Then 
 $(h^{-1}\, f_{(i,t)} \, h)(x) = (h^{-1}\, f_{(i,t)})(x+q) =
 h^{-1}( t\, L_i((x+q)-q)+q) =  h^{-1}( t\, L_i(x)+q) = t\, L_i(x)$.

The last statement in the proposition follows from the fact that the attractor of an IFS, all of whose functions are linear, is a single point, the origin.   
\end{proof}

\begin{proposition} \label{prop:quasi}  The following statements are equivalent for a one-parameter affine family 
$F_t  = \{  f_{(i,t)}(x)  := t\, f_i(x) + q_i \, : 1\leq i \leq N \}$. 
\begin{enumerate}
\item $F_t$ is quasi-linear.
\item  There are points $q_t, \, t\in [0,t_0),$  such that $q_t$ is the unique fixed point of $f_{(i,t)}$ for all $i=1,2,\dots, N$.  
\item The attractor $A_t$ of $F_t$ is a single point for all $t\in [0,t_0)$.  
\end{enumerate}
\end{proposition}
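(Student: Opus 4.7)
The plan is to establish the three equivalences by going around the cycle $(1) \Rightarrow (2) \Rightarrow (3) \Rightarrow (2) \Rightarrow (1)$, which after collapse requires showing $(1)\Leftrightarrow (2)$ and $(2)\Leftrightarrow (3)$. The argument should be a direct parametric adaptation of the proof of Proposition~\ref{prop:linear}, the only new ingredient being that the conjugating map $h_t$, and hence the common fixed point, is now allowed to depend on $t$. All the tools are in place: Proposition~\ref{prop:fixed} gives a unique fixed point of each $f_{(i,t)}$ for $t\in[0,t_0)$, Lemma~\ref{prop:props} supplies the normal form $f_{(i,t)}(x)=tL_i(x-q_t)+q_t$ whenever $q_t$ is a common fixed point, and Lemma~\ref{lem:trans} lets me reduce any affine conjugator to a translation.

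For $(1)\Rightarrow (2)$, starting from an affine $h_t$ independent of $i$ that conjugates every $f_{(i,t)}$ to a linear map, I would invoke Lemma~\ref{lem:trans} to replace $h_t$ by a translation $h_t(x)=x+q_t$. Since the origin is a fixed point of every linear map, $q_t=h_t(0)$ is then a fixed point of every $f_{(i,t)}$, and by Proposition~\ref{prop:fixed} it is the unique one. The converse $(2)\Rightarrow (1)$ is the same computation as in Proposition~\ref{prop:linear}: with $h_t(x)=x+q_t$, Lemma~\ref{prop:props} yields $(h_t^{-1}f_{(i,t)}h_t)(x)=tL_i(x)$, which is linear and has $h_t$ independent of $i$, so $F_t$ is quasi-linear.

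For $(2)\Rightarrow (3)$, if every $f_{(i,t)}$ fixes $q_t$, then the compact set $\{q_t\}$ is invariant under the Hutchinson operator, i.e.\ $F_t(\{q_t\})=\{q_t\}$. Since the attractor is the unique compact solution of $A=F_t(A)$ (equation~\eqref{eq:fixed}), it follows that $A_t=\{q_t\}$. Conversely $(3)\Rightarrow (2)$: if $A_t$ is a single point $\{q_t\}$, the fixed-set equation $F_t(\{q_t\})=\{q_t\}$ forces every $f_{(i,t)}$ to fix $q_t$, and Proposition~\ref{prop:fixed} again supplies uniqueness.

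The main obstacle is conceptual rather than computational: one has to recognize that the only structural difference between the linear and quasi-linear cases is whether the common fixed point is allowed to move with $t$, and then that allowing it to move has no effect on the single-point shape of the attractor because for each fixed $t$ the Hutchinson operator still admits the singleton $\{q_t\}$ as its unique compact fixed set. Once that observation is made, the proof is essentially a re-run of Proposition~\ref{prop:linear} with $q$ replaced by $q_t$ and a short appeal to uniqueness of the attractor.
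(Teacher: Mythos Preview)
Your proposal is correct and follows essentially the same approach as the paper. The paper likewise refers the equivalence $(1)\Leftrightarrow(2)$ and the implication $(1)\Rightarrow(3)$ back to the proof of Proposition~\ref{prop:linear}; the only cosmetic difference is that for the remaining direction the paper argues $(3)\Rightarrow(1)$ by contrapositive (if $F_t$ is not quasi-linear then two distinct fixed points lie in $A_t$ by Proposition~\ref{prop:fixed}), whereas you argue $(3)\Rightarrow(2)$ directly from $F_t(\{q_t\})=\{q_t\}$, which is an equally valid and equally short route.
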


\begin{proof} The equivalence of the first two statements, as well as (1) $\Rightarrow$ (3), can be shown exactly as was done in the proof of Proposition~\ref{prop:linear}.  

(3) $\Rightarrow$ (1) Assume that $F_t$ is not quasi-linear.  Then, for some $t\in[0,t_0)$ and some $i\neq j$, the fixed points of  $f_{(i,t)}$ and $f_{(j,t)}$ must be different.  By Proposition~\ref{prop:fixed}, the attractor $A_t$ of $F_t$ contains all fixed points of the functions in $F_t$.  
Therefore, the attractor $A_t$ must contain at least two points.  
\end{proof}

\begin{example}[A Quasi-Linear Family that is not Linear]
\label{ex:surprise}
On $\R^2$, let
\[F_t = \Bigg \{f_t  \begin{pmatrix} x \\ y \end{pmatrix} = t\, \begin{pmatrix} x  \\  y+1 \end{pmatrix} + \begin{pmatrix} 1\\ 0 \end{pmatrix}, \; g_t    \begin{pmatrix} x \\ y \end{pmatrix} = t\, \begin{pmatrix} y+1 \\ -x+2y+2 \end{pmatrix} +\begin{pmatrix} 1 \\  0 \end{pmatrix} \Bigg \}. \] 
By Proposition~\ref{prop:linear}, $F_t$ is not a linear family because $q= (1,0)$ is not a fixed point of $f_t$ (nor of $g_t$).    Nevertheless, a straightforward calculation shows that $\Big (\frac{1}{1-t}, \,\frac{t}{1-t} \Big )$ is a fixed point of both $f_t$ and $g_t$ for all $t\in [0,t_0) = [0,1)$.  
\end{example}

In fact, examples of quasi-linear families that are not linear, like the one in Example~\ref{ex:surprise}, can be constructed, in general, as follows.  Let $W$ be any proper subspace of $\R^n$.  Let $L_i, \,i=1,2,\dots ,N,$ be linear maps that are equal when restricted to $W$, i.e.,  $L_i|_{W} = L_j|_{W}$ for all $i,j$, but $L_i \neq L_j$ for at least one pair $i\neq j$.  Let $w_0$ be any point in $W$ and $a$ any point not in $W$.  Let $q$ be any point 
such that $q-a \in W$.  Finally, let $f_i(x) = L_i(x-a) +w_0$ and $F_t = \{ t f_i(x) + q : i = 1,2,\dots, N\}$.  To show that, for a given $t$, all the functions in $F_t$ have the same fixed point, first notice that, for all $w\in W$ and all $i$, we have
\[t f_i(a+w) + q = t (L_i(a+w-a) + w_0) + q = t L_i(w) + t w_0 + a + w_1 = a + ( t L_i(w) + w_2) \in a + W,\]
for some $w_1 \in W$ and $w_2 = t w_0 + w_1$.  Since $L_i(w) = L_j(w)$ for all $i,j$, also  $t f_i(a+w) + q = t f_j(a+w) + q$ for all
$i,j$ and for all $t$.  Using the notation $f_{i,t}(x) = t f_i(x)+q$, we have that the affine subspace $a+W$ is invariant under $f_{i,t}$
and that $f_{i,t}|_{a+W} = f_{j,t}|_{a+W}$ for all $i,j$.   Therefore for $t<t_0$, all the functions $f_{i,t}$ in $F_t$ have the
same fixed point on the affine subspace $a+W$.

\begin{proposition} \label{prop:semi}  The following statements are equivalent for a one-parameter family 
$F_t  = \{ f_{(i,t)}(x) := t\, f_i(x) + q_i \, : 1\leq i \leq N \}$. 
\begin{enumerate}
\item $F_t$ is semi-linear.
\item  The point $q_i, \, i=1,2,\dots N,$ is the unique fixed point of $f_{(i,t)}$ for all $t\in [0,t_0)$.   
\end{enumerate}
Moreover, $q_i$ is also a fixed point of $f_{(i,t_0)}$ for all $i$.  
\end{proposition}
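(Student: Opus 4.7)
The plan is to mirror the proof of Proposition~\ref{prop:linear}, with the essential new feature that each function in the family now gets its own conjugacy $h_i$ rather than a single common $h$. The technical tool enabling this modification is Lemma~\ref{lem:trans}, which reduces an affine conjugacy to a translation (at the cost of changing the linear target). For the direction (1) $\Rightarrow$ (2), I would first invoke Lemma~\ref{lem:trans} to replace each $h_i$ by a translation $h_i(x) = x + a_i$; the crucial observation is that the translational part $a_i$ is again $t$-independent because the original $h_i$ is. The linear function $L_{i,t}$ to which $f_{(i,t)}$ is conjugate must be contractive for $t \in (0, t_0)$ by Theorem~\ref{thm:jsr}, so its unique fixed point is the origin, and hence the unique fixed point of $f_{(i,t)}$ is $h_i^{-1}(0) = -a_i$, independent of $t$. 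Calling this point $p_i$, the fixed-point equation $p_i = t f_i(p_i) + q_i$ must hold as an identity in $t$ on the open interval $(0, t_0)$; matching the coefficient of $t$ forces $f_i(p_i) = 0$, and then the constant terms give $p_i = q_i$.

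For the converse (2) $\Rightarrow$ (1), I would apply Lemma~\ref{prop:props} to the hypothesis that $q_i$ is a fixed point of $f_{(i,t)}$: this yields both $f_i(q_i) = 0$ and the explicit formula $f_{(i,t)}(x) = t\,L_i(x - q_i) + q_i$, where $L_i$ is the linear part of $f_i$. Taking $h_i(x) = x + q_i$, a direct computation shows $h_i^{-1} \circ f_{(i,t)} \circ h_i = t L_i$, a linear map, and $h_i$ is manifestly independent of $t$. Hence $F_t$ is semi-linear.

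The moreover clause is then immediate: since (2) (equivalently (1)) yields $f_i(q_i) = 0$, the computation $f_{(i,t_0)}(q_i) = t_0 f_i(q_i) + q_i = q_i$ shows $q_i$ remains fixed at the threshold. The main obstacle I anticipate is bookkeeping rather than substance: one must carefully distinguish the $t$-independence of the conjugacy $h_i$ from the $t$-dependence of the linear function $L_{i,t} = t L_i$ to which $f_{(i,t)}$ is conjugated, and apply Lemma~\ref{lem:trans} so as not to inadvertently re-introduce $t$-dependence into the translational part of the conjugacy. Once that is handled cleanly, the identity-in-$t$ argument that drives (1) $\Rightarrow$ (2) is a routine coefficient comparison.
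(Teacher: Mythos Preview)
Your proposal is correct and follows essentially the same route as the paper, which simply points back to the proof of Proposition~\ref{prop:linear} (with $h_i$ in place of a common $h$) and then invokes Lemma~\ref{prop:props} for the ``moreover'' clause. One small slip: with the paper's convention $L_{i,t}=h_i^{-1}\circ f_{(i,t)}\circ h_i$, the fixed point of $f_{(i,t)}$ is $h_i(0)=a_i$, not $h_i^{-1}(0)=-a_i$; since you immediately rename it $p_i$ and argue from the identity $p_i=t f_i(p_i)+q_i$, this does not affect the argument.
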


\begin{proof}  The proof of the equivalence of statements (1) and (2) is the same as for Proposition~\ref{prop:linear}.
That  $q_i$ is also a fixed point of $f_{(i,t_0)}$ follows from Lemma~\ref{prop:props};  $f_i(q_i)=0$ implies $f_{(i,t_0)}(q) := t_0\, f_i(q) + q_i = q_i$.
\end{proof}

\begin{example}[A Semi-Linear Family that is not Quasi-Linear]  There are many such examples. For example, 
The one-parameter similarity family of Example~\ref{ex:intro} is semi-linear, but not quasi-linear.  
\end{example}

\section{Threshold for Connectivity of the Attractor} \label{sec:connected}

\begin{theorem}  The attractor of an affine IFS cannot have a finite number, greater than one, of connected components.
\end{theorem}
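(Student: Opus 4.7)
My plan is to argue by contradiction: suppose $A$ has $k$ connected components $C_1,\ldots,C_k$ with $2\leq k<\infty$. Because $A$ is compact with finitely many components, each $C_m$ is clopen in $A$ and the components are mutually separated by a positive distance. Since $f_\ell(C_j)$ is the continuous image of a connected set, it is itself connected and lies in a single component $C_{\sigma_\ell(j)}$; this defines maps $\sigma_\ell\colon\{1,\ldots,k\}\to\{1,\ldots,k\}$, and by composition $\sigma_\tau=\sigma_{\ell_1}\circ\cdots\circ\sigma_{\ell_n}$ for every word $\tau=\ell_1\cdots\ell_n$.

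The first key step is to show that $\sigma_\tau$ has at most one combinatorial fixed point. By Theorem~\ref{thm:jsr}, each $f_\tau$ is a contraction in some equivalent metric and thus has a unique fixed point in $\R^d$; if $\sigma_\tau(j)=j$ then $f_\tau$ maps $C_j$ into itself, forcing this fixed point to lie in $C_j$, and disjointness of components excludes two distinct such $j$. Specializing this to $\sigma_\ell^r=\sigma_{\ell^r}$ rules out any cycle of length $\geq 2$ in $\sigma_\ell$, so each $\sigma_\ell$ has exactly one fixed point that absorbs every other element under iteration.

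Next I would use symbolic dynamics to obtain a normal form. The address map $\pi\colon\{1,\ldots,N\}^{\mathbb{N}}\to A$ is continuous, and the coloring $\phi\colon A\to\{1,\ldots,k\}$ assigning each point to its component is continuous with discrete target; hence $\phi\circ\pi$ is locally constant on a compact totally disconnected space, so it depends on only finitely many coordinates. After replacing $F$ by $F^{n_0}$ for a suitable $n_0$ (which has the same attractor and the same components), I may assume $\phi\circ\pi$ depends only on $i_1$, so $f_\ell(A)\subseteq C_{\phi(\ell)}$ for every $\ell$; equivalently, each $\sigma_\ell$ is the constant map $\phi(\ell)$, and $\{1,\ldots,N\}=\bigsqcup_m I_m$ with each $I_m=\phi^{-1}(m)$ nonempty.

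The endgame must extract the contradiction from the identity $C_m=\bigcup_{\ell\in I_m} f_\ell(A)$: each $f_\ell(A)$ is a homeomorphic copy of $A$ with $k\geq 2$ components, so it is itself disconnected, yet the union $C_m$ is connected. This forces $|I_m|\geq 2$ and substantial overlaps among the pieces, and iterating with $F^n$ subdivides $C_m$ into overlapping pieces at arbitrarily small scales, each still carrying $k$ disconnected components of its own. I expect the argument to push all components down to points, at which stage a short separate argument finishes the job: a finite attractor with $k\geq 2$ points would require each $f_\ell|_A$ to be a permutation of $A$, which is impossible since a contraction has a unique fixed point and no positive iterate of it can be the identity on a multi-point set. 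The main obstacle I anticipate is making this final scale-reduction step rigorous for general affine (non-similarity) contractions, where the disparity between upper and lower Lipschitz constants prevents a naive diameter estimate at a single scale from immediately closing the proof; a successful closing argument may need to combine the combinatorial constraints on the maps $\sigma_\tau$ with a continuum-theoretic property of the compact connected sets $C_m$.
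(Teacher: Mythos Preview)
Your combinatorial setup is correct and pleasant: the component maps $\sigma_\ell$ are well defined, the ``at most one fixed point for $\sigma_\tau$'' argument via the unique fixed point of the contraction $f_\tau$ is valid, and the symbolic-dynamics reduction (using that $\phi\circ\pi$ is locally constant, hence depends on finitely many coordinates) to the case where every $\sigma_\ell$ is constant is sound. From that you correctly extract $C_m=\bigcup_{\ell\in I_m} f_\ell(A)$ with each $I_m$ nonempty and, since $f_\ell(A)$ has $k\ge 2$ components, necessarily $|I_m|\ge 2$.

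The gap is exactly where you flag it, and it is genuine. After the reduction you are left with a finite family of affine copies $f_\ell(C_j)$, $\ell\in I_m$, $1\le j\le k$, whose union is the connected continuum $C_m$; this forces an overlap graph on these pieces to be connected, but nothing prevents that. Your proposed ``scale-reduction'' to points is not a proof: iterating only produces, inside each $C_m$, ever finer overlapping covers by homeomorphic copies of $A$, and there is no evident invariant (measure, diameter, or otherwise) that is driven to a contradiction. The separate finishing argument you sketch for a finite attractor is fine, but you have no mechanism that forces the components to be singletons. In short, the machinery you built is correct but does not close.

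The paper's proof avoids all of this by working \emph{outside} $A$ rather than inside it. From Theorem~\ref{thm:jsr} one takes a connected convex body $K$ with $F(K)\subset K^o$, so that $A=\bigcap_n F^n(K)$ with $F^n(K)$ nested. Either every $F^n(K)$ is connected, whence $A$ is connected, or some first $F^m(K)$ is disconnected; replacing $K$ by $F^{m-1}(K)$ one may take $m=1$, and then two of the images, say $f_1(K)$ and $f_2(K)$, lie in different components of $F(K)$. Because $f_i$ is a homeomorphism and $f_i(F(K))\subset f_i(K)$, the four sets $f_i f_j(K)$, $i,j\in\{1,2\}$, lie in four distinct components of $F^2(K)$, and inductively $F^n(K)$ has at least $2^n$ components. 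Since each $f_\sigma(K)$ contains $f_\sigma(A)\subset A$, the attractor meets every one of these components, and $A\subset F^n(K)$ then forces $A$ to have at least $2^n$ components for every $n$. This three-line doubling argument replaces your entire endgame; the key idea you are missing is to track components of the \emph{decreasing approximants} $F^n(K)$ rather than of $A$ itself.
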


\begin{proof}  Let $K$ be a connected compact set such that $F(K) \subset K^o$ as guaranteed by Theorem~\ref{thm:jsr}.  If $F^n(K)$ is connected for all $n$, then so is $A_F$, being the intersection
of compact connected sets.  Otherwise there is a first $m$ such that $F^m(K)$ is not connected.  By substituting 
$F^m(K)$ for $K$, it can be assume without loss of generality that $m=1$.  Now assume that, again without loss of
generality, that $f_1(K)$ and $f_2(K)$ lie in distinct components of $F(K)$.  This implies that 
$f_1\circ f_1(K), \,  f_1\circ f_2 (K), \, f_2\circ f_1(K), \, f_2\circ f_2(K)$ all lie in four distinct components of 
$F^{(2)}(K)$, with the first two lying in $f_1(K)$ and the last two in $f_2(K)$.    Continuing in this way, the number of components of $F^n(K)$ goes to infinity with $n$.  
Therefore $A_F$ has infinitely many components.
\end{proof}

\begin{example}[An Attractor with Infinitely many Connected Components that is not Totally Disconnected]  \label{ex:shooter}
Such an example is shown in Figure~\ref{fig:shooter}.  This is the attractor of an affine IFS consisting of nine functions indicated by the different gray shades.  
 On the other hand, the attractor $A$ of any contractive IFS consisting of exactly two functions is know to be either connected or totally disconnected \cite{MB}.
\end{example}

\begin{figure}[htb]  
\vskip -5mm
\includegraphics[width=5cm, keepaspectratio]{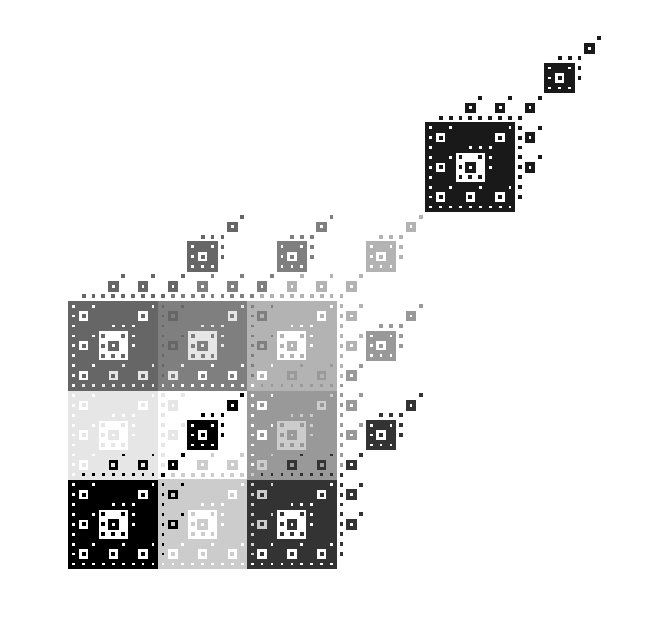}
\caption{An attractor with infinitely many connected components that is not totally disconnected.}
\label{fig:shooter}
\end{figure}

\begin{example}[A One-Parameter Affine Family  $F_t$ such that $A_t$ is Disconnected for all $t \in [0, t_0)$]  
\label{ex:disAt}
Consider the one-parameter affine family 
\[F_t = \{f_t  \begin{pmatrix} x \\ y \end{pmatrix} = t\, \begin{pmatrix} 1 & 0 \\ 0 & \frac{1}{10} \end{pmatrix} \begin{pmatrix} x \\ y \end{pmatrix}, \; g_t    \begin{pmatrix} x \\ y \end{pmatrix} = t\, \begin{pmatrix} \frac{1}{10} & 0 \\ 0 & \frac{1}{10} \end{pmatrix} 
\begin{pmatrix} x \\ y \end{pmatrix} + \big (1-\frac{t}{10} \big) \, \begin{pmatrix} 1 \\ 1 \end{pmatrix}. \] 
By Theorem~\ref{thm:jsr}, if $t \geq 1$, then $F_t$ has no attractor.  And if $t< 1$, then the attractor of
$F_t$ is disconnected.  This can been seen because, if $K$ is the unit square with vertices $(0,0), (1,0), (0,1), (1,1)$,
then for all $t\in [0, 1)$, we have $F_t(K) \subset K, \, (0,0) \in f_t(K), \, (1,1) \in g_t(K)$.  However, $f_t(K)\cap g_t(K) = \emptyset$.  
\end{example}

\begin{example}[A One-Parameter Similarity Family with no Single Threshold for Connectivity of the Attractor]
\label{ex:2}
An examination of the Mandelbrot set for the family given in \eqref{eq:man}, in particular the region of  the ``ram's horn" about a third of the way up from the horizontal  in Figure~\ref{fig:mandel},  shows that, 
even for the simple one-parameter family of the form \eqref{eq:man}, there can exist 
$0 <t < t'< t'' < 1$ such that $A_{ t}$ and  $A_{ t''}$ are connected, but $A_{ t'}$ is not connected.  
Thus there can, in general, exist no single threshold for connectivity for a one-parameter affine family, even a family consisting of 
two similarities. 
\end{example}

Because of  Example~\ref{ex:disAt},  it is assumed  in Theorem~\ref{thm:sim} below that the functions in the IFS are similarities. 
 Because of Example~\ref{ex:2}, we prove the existence of two thresholds $\tau$ and $\tau'$ such that, for $t$ greater then $\tau$, the attractor $A_t$ of a one-parameter
similarity family is connected, and for $t$ less than $\tau'$, the attractor $A_t$ of an affine, but not linear, family is disconnected.  A slightly weaker notion of connectivity, called {\it weak connectivity}, is then introduced, for which 
a single threshold is proved; see Theorem~\ref{thm:wc}.

\begin{theorem} \label{thm:sim} Given a one-parameter similarity family $F_t$, there exists a real number $\tau$ with $\tau \in [0,t_0)$ such that the attractor $A_t$ is connected for all $t \in  (\tau,t_0)$.  
\end{theorem}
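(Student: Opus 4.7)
The plan is to use Hata's criterion for connectivity of attractors: $A_t$ is connected if and only if the intersection graph $G_t$ on $\{1,\ldots,N\}$, with an edge $\{i,j\}$ whenever $f_{i,t}(A_t) \cap f_{j,t}(A_t) \neq \emptyset$, is connected. By Remark~\ref{rem:noloss} I normalize so that $t_0 = 1$ and $\rho(F) = \max_i r_i = 1$, where $r_i$ is the scaling ratio of the similarity $f_i$. Fix an index $i^*$ with $r_{i^*}=1$, so that $f_{i^*,t}$ has scaling ratio $t \uparrow 1$ and approaches a Euclidean isometry.

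I would argue by contradiction. Suppose there is a sequence $t_k \uparrow 1$ with $A_{t_k}$ disconnected. By Hata's criterion each $G_{t_k}$ is disconnected, and since there are only finitely many graphs on $\{1,\ldots,N\}$, a subsequence has $G_{t_k}=G$ constant, with a fixed non-trivial bipartition $\{1,\ldots,N\} = I \sqcup J$ admitting no cross edges; WLOG $i^* \in I$. Next I rescale and translate: setting $D_k = \text{diam}(A_{t_k})$ and choosing a reference point $c_k \in A_{t_k}$ (say, the fixed point $p_{i^*,t_k}$), define the normalized compact sets $\hat{A}_k = D_k^{-1}(A_{t_k}-c_k)$ of diameter~$1$. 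By Blaschke selection, a subsequence gives $\hat{A}_k \to \hat{A}_\infty$ in Hausdorff metric; the conjugated IFS $\hat{F}_{t_k}$, consisting of similarities with ratios $t_k r_i \to r_i$, orthogonal parts $O_i$ independent of $k$, and bounded translational parts, converges to a limit similarity IFS $\hat{F}_\infty$ in which $\hat{f}_{i^*,\infty}$ has ratio~$1$ and is therefore a Euclidean isometry.

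The limit satisfies $\hat{A}_\infty = \bigcup_i \hat{f}_{i,\infty}(\hat{A}_\infty)$, so $\hat{f}_{i^*,\infty}(\hat{A}_\infty)\subset \hat{A}_\infty$. A standard compactness argument (an isometry sending a bounded set into itself is forced, by iteration and either volume invariance or translational drift to infinity, to be surjective) gives $\hat{f}_{i^*,\infty}(\hat{A}_\infty) = \hat{A}_\infty$. Consequently $\bigcup_{j \in J} \hat{f}_{j,\infty}(\hat{A}_\infty) \subset \hat{A}_\infty = \bigcup_{i \in I} \hat{f}_{i,\infty}(\hat{A}_\infty)$, so the bipartition $I\sqcup J$ no longer separates the limit set. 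Combined with continuity of $t\mapsto A_t$ (Theorem~\ref{thm:continuity}) and the rigidity of similarities, this will force actual overlap of $f_{i,t_k}(A_{t_k})$ and $f_{j,t_k}(A_{t_k})$ for some $i \in I$, $j \in J$ and all large $k$, contradicting $G_{t_k}=G$.

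The main obstacle I expect is in closing this final step rigorously. Hausdorff limits of disjoint compact sets can meet in the limit without the original sets ever meeting, so the mere inclusion $\bigcup_{j\in J}\hat{f}_{j,\infty}(\hat{A}_\infty) \subset \bigcup_{i\in I}\hat{f}_{i,\infty}(\hat{A}_\infty)$ does not by itself contradict the finite-$k$ disjointness; one must upgrade it to a quantitative statement, e.g.\ that some $\hat{f}_{j,\infty}(\hat{A}_\infty)$ sits in the \emph{topological interior} of some $\hat{f}_{i,\infty}(\hat{A}_\infty)$. This genuinely exploits the extra rigidity of similarities over general affine maps, since Example~\ref{ex:disAt} shows the statement outright fails in the affine setting. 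A secondary technical issue is the case $D_k \to \infty$ (as in Example~\ref{ex:I}), in which the rescaled limit IFS can degenerate---e.g., in the one-dimensional family $\{tx,\,tx+1\}$ both rescaled maps collapse to the identity---so either a finer normalization, or a separate treatment of bounded versus unbounded diameter, will be needed to make the limit IFS non-trivial.
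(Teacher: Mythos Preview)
Your compactness approach is more intricate than needed, and the two gaps you yourself flag are genuine and not closed. The final step cannot be completed as stated: from the inclusion $\bigcup_{j\in J}\hat f_{j,\infty}(\hat A_\infty)\subseteq\bigcup_{i\in I}\hat f_{i,\infty}(\hat A_\infty)$ one cannot infer overlap at finite $k$ without an interior-point argument, and $\hat A_\infty$ will typically have empty interior (it is a fractal limit of self-similar sets). Your proposed upgrade---that some $\hat f_{j,\infty}(\hat A_\infty)$ lies in the \emph{topological interior} of some $\hat f_{i,\infty}(\hat A_\infty)$---is simply false whenever $\hat A_\infty$ has empty interior, so no rigidity of similarities will rescue it. The $D_k\to\infty$ degeneration is also real, and you offer no mechanism to handle it. As written, this is a strategy with two acknowledged holes, not a proof.

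The paper bypasses all of this with a short volume argument. Normalize $t_0=1$, so $\max_i r_i=1$. For $t<1$ take the convex body $K$ with $F_t(K)\subset K^o$ supplied by Theorem~\ref{thm:jsr} and set $K_j=F_t^j(K)$, a nested decreasing sequence of compact sets with $A_t=\bigcap_j K_j$. If $A_t$ were disconnected there would be a least $j$ with $K_j$ connected but $K_{j+1}$ not, yielding a nontrivial partition $F=F^*\sqcup H$ with $g_t(K_j)\cap h_t(K_j)=\emptyset$ for every $g\in F^*,\,h\in H$. Take $g$ with $s_g=1$ (WLOG $g\in F^*$) and any $h\in H$. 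Since $g_t(K_j)$ and $h_t(K_j)$ are disjoint subsets of $K_j$ and $K_j$ has positive volume, comparing Lebesgue measures gives $(t s_g)^d+(t s_h)^d\le 1$. But $(t s_g)^d+(t s_h)^d\ge t^d\bigl(1+\min_i r_i^d\bigr)>1$ as soon as $t>\tau:=\bigl(1+\min_i r_i^d\bigr)^{-1/d}<1$, a contradiction. The similarity hypothesis enters precisely through the exact scaling $\mathrm{vol}\,g_t(K_j)=(t s_g)^d\,\mathrm{vol}\,K_j$, and the argument produces an explicit $\tau$ with no Hausdorff limits, no subsequences, and no graph combinatorics.
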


\begin{proof} By remark~\ref{rem:noloss}, there is no loss of generality in  assuming that $t_0 = 1$.   For a similarity $f$, let $s_f$ denote the scaling ratio of $f$.  
We first claim that, for any partition $(F^*,H)$ of the set of functions of a similarity IFS $F$ in $\R^d$ 
with $F^*\neq \emptyset, \, H \neq \emptyset$, there exist functions $g\in F^*$ and $h\in H$ such that $(s_g)^n + (s_h)^n > (\rho(F))^n = 1$.  
To see this, let $g$ be the function in $F$ with the maximum scaling ratio, and assume without loss of 
generality that $g\in F^*$.  If $h$ is any function in $H$, then  $1 = (s_g)^n  < (s_g)^n + (s_h)^n$.

Let $\tau = 1/((s_g)^n + (s_h)^n)^{1/n} <1$ so that if $1 > t > \tau$, then  $(s_g)^n + (s_h)^n > 1/t^n > 1$,
which implies that $(s_{g_t})^n + (s_{h_t})^n  = ( t s_g)^n + (t s_h)^n > 1$.  For $t$ such that $1 > t > \tau$, we claim that 
 $F_t$ has a connected attractor.  Assume, by way of contradiction, that this is not the case. 
This imples that either (1) $F_t$ has no attractor or (2) $F_t$ has an attractor $A_t$, but $A_t$ is not connected.  
Case 1 is not possible by Corollary~\ref{cor:existence} since  
$\rho( F_t ) <1$.  Referring to Theorem~\ref{thm:jsr}, let $K$ be a compact convex set containing the attractor $A_t$ such that $F_t(K) \subset K$. 
Letting $K_j := F_t^{(j)}(K)$, this implies that $K_{j+1} \subset K_j$ for all $j$.   
Since $A_t = \lim_{j\rightarrow \infty}
 K_j$ and the intersection of a nested sequence of compact connected sets is connected, there must exist an
integer $j$ such that $K_j$ is connected, but $K_{j+1} = \bigcup_{f\in F}\, f_t(K_j)$ is not.  
Therefore there is a partition $(F^*,H)$ of $F$  with $F^*\neq \emptyset, \, H \neq \emptyset$, such that
$g_t(K_{j}) \cap h_t(K_{j}) = \emptyset$ for all $g\in F^*$ and $h \in H$.  However, 
the existence of functions $g,h$ such that $(s_{t g})^n + (s_{t h})^n > 1$ (by the paragraph above) implies that 
 $volume \, g_t(K_j)+ volume \, h_t(K_j) > volume \, K_{j+1}$, which is impossible since $g_t(K_j) \cap h_t(K_j) = \emptyset$
and $K_{j+1} \subset K_{j}$.
\end{proof}

\begin{example}[An Affine Family $F_t$ for which the Attractor $A_t$ is Connected for all $t\in [0,t_0)$] 
\label{ex:surprise2}
Any quasi-linear one-parameter family is such an example (see Example~\ref{ex:surprise} and the general construction following it).  
For a quasi-linear  family the attractor  $A_t$ is, by Proposition~\ref{prop:quasi}, a single point for all $t\in [0,t_0)$, hence connected.
\end{example}

\begin{theorem}  \label{thm:aff2} If $F_t = \{t f_i(x) + q_i\, : \, 1\leq i \leq N\}$  is a one-parameter affine, but not quasi-linear, family, then there is a real number $\tau >0$ such that $A_t$ is disconnected for all $t\in [0,\tau)$.
\end{theorem}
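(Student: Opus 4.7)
The plan is to analyze $A_t$ as $t \to 0^+$ and exploit the fact that each $f_{(i,t)}$ becomes strongly contractive while being pulled toward $q_i$. I will fix once and for all a radius $R > 0$ large enough that $K := \overline{B}(0,R)$ satisfies $F_t(K) \subseteq K$ for all $t \in [0,\delta]$ for some $\delta > 0$; this is straightforward from the fact that $|f_i|$ is bounded on $K$ uniformly in $i$. Then $A_t \subseteq K$ for such $t$, and I set $M := \sup\{|f_i(x)| : 1 \leq i \leq N,\, x \in K\}$.

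I will split into two cases. In the first, assume the $q_i$ are not all equal, with distinct values $v_1,\dots,v_m$ and $m \geq 2$. Since $f_{(i,t)}(K) \subseteq \overline{B}(q_i, Mt)$, one has $A_t = F_t(A_t) \subseteq \bigcup_{i=1}^N \overline{B}(q_i, Mt) = \bigcup_{k=1}^m \overline{B}(v_k, Mt)$, and for $t$ smaller than $\min_{k \neq \ell}|v_k - v_\ell|/(2M)$ these $m$ balls are pairwise disjoint. Each ball contains a fixed point $x_i(t) = (I - tL_i)^{-1}(ta_i + q_i) \in A_t$ for any $i$ with $q_i = v_k$ (the formula makes it clear that $x_i(t) \to q_i$ as $t\to 0^+$), so $A_t$ has at least $m \geq 2$ connected components.

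The main work is reducing the remaining situation, when all $q_i$ coincide at some common value $q$, to the previous case. I will apply the affine rescaling $y = (x - q)/t$: direct computation shows that $\tilde{A}_t := (A_t - q)/t$ is the attractor of the one-parameter affine family $\tilde{F}_t^{(1)} = \{tL_i y + f_i(q) : 1 \leq i \leq N\}$, whose new translational parts are $\tilde{q}_i^{(1)} = f_i(q)$. This rescaling is an affine bijection, so $A_t$ is disconnected iff $\tilde{A}_t$ is. If the $f_i(q)$ are not all equal, the first case applies to $\tilde{F}_t^{(1)}$; otherwise, I iterate the rescaling. Using the power-series expansion
\[ x_i(t) = q + \sum_{k \geq 1} t^k\, L_i^{k-1} f_i(q), \]
one checks that the $k$-th iteration terminates (i.e., the new translational parts disagree for the first time) precisely when $L_i^{k-1} f_i(q)$ first depends on $i$. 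Thus if the iteration never terminates, every coefficient of $x_i(t)$ is independent of $i$, forcing $x_i(t) \equiv x_j(t)$ for all $i,j$; but by Proposition~\ref{prop:quasi} this contradicts the hypothesis that $F_t$ is not quasi-linear. Hence the iteration terminates after finitely many steps, and the first case applies to the rescaled family.

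The main obstacle is this iterative rescaling in the case of coincident translational parts: the bookkeeping must correctly identify the new family at each step and show that the non-termination condition translates exactly to quasi-linearity of the original family. The power-series identity above makes this correspondence transparent and is the key technical observation enabling the proof.
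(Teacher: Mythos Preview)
Your argument is correct. The first case (not all $q_i$ equal) coincides with the paper's treatment. In the second case (all $q_i = q$) your approach diverges substantially from the paper's. The paper argues directly by a diameter-shrinking estimate: if $F_t^{(n)}(B)$ remained connected for every $n$, then (for $t$ below a constant depending only on the Lipschitz constants of the $f_i$ and on $N$) the diameters of $F_t^{(n)}(B)$ would decay geometrically, forcing $A_t$ to be a single point and hence the common fixed point of all $f_{(i,t)}$, contradicting non--quasi-linearity. Your route instead conjugates by the affine map $x \mapsto (x-q)/t$ to produce a new one-parameter family whose translational parts are the $f_i(q)$, then iterates; the power-series identity $x_i(t) = q + \sum_{k\ge 1} t^k L_i^{k-1} f_i(q)$ shows the iteration terminates in finitely many steps unless all fixed points coincide identically, i.e.\ unless the family is quasi-linear.

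What each approach buys: the paper's diameter argument is shorter and avoids any bookkeeping, but it delivers only an implicit threshold and leans on the (easy but unspoken) fact that coincidence of fixed points on an interval extends to all of $[0,t_0)$ by rationality. Your rescaling argument is more constructive: it reduces cleanly to the first case, gives in principle an explicit $\tau$ in terms of the first index $k$ at which the coefficients $L_i^{k-1}f_i(q)$ separate, and the termination proof via the power series makes the link to quasi-linearity completely transparent. One minor point worth tightening in your write-up: when you invoke the first case for the rescaled family $\tilde F_t^{(k)}$, the ball $K$ and the bound $M$ must be chosen anew for that family (with $\tilde f_i = L_i$ linear and $\tilde q_i = L_i^{k-1} f_i(q)$); this is routine but should be stated so the threshold $\tau$ is seen to be strictly positive.
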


\begin{proof} 
First assume that not all the $q_i$ are equal.  If $B$ is a ball containing the origin and all the $q_i$  in its interior, and if $t$ is sufficiently small, say $t < \alpha_1$, then $(f_i(B)+q_i) \cap (f_j(B)+q_j) = \emptyset$ for all $i,j$ such that $q_i\neq q_j$ and $f_i(B)+q_i \subset B$ for all $i$.  Therefore, for $t <\alpha_1$, we  have $F_t(B) \subset B$ and $F_t(B)$ is disconnected.  This is sufficient to insure that $A_t$ is disconnected.

From the paragraph above, we may assume that all the $q_i$ are equal, that the one-parameter family has the form  $F_t= \{  t\, f_i(x) + q \, : \, 1\leq i \leq N\}$.  Let $B$ be a ball containing $q$ in its interior.  If $t$ is sufficiently small, say $t < \alpha_2 \leq
\alpha_1$, then $t \, f_i(B) \subset B$ for all $i$; hence, for
$t < \alpha_2$, we have $F_t(B) \subset B$.  Since $A_t = \bigcap_{n\geq 0} F_t^{(n)}(B)$, to prove that $A_t$ is disconnected,
it is sufficient to show that  $F_t^{(n)}(B)$ is disconnected for some integer $n$.  Assume, by way of contradiction, that
 $X_n:= F_t^{(n)}(B)$ is connected for all $n$.  For any compact set $X\subset \R^d$, denote its diameter by $D(X)$,  i.e., 
the largest distance between any two points of $X$.  For each $i$, there is a constant $c_i$ such that, for any $X\subset \R^d$,
it is the case that $D(f_i(X)) \leq c_i \,D(X)$.  Therefore there is a constant $c$ (depending on $N$) such that, if $\bigcup_{i=1}^N f_i(X)$ is connected, then  $D(\bigcup_{i=1}^N f_i(X)) \leq c \,D(X)$, and therefore  $D(\bigcup_{i=1}^N t\, f_i(X)) \leq c t \,D(X)$.  Under our assumption that $X_n$ is connected for all $n$, we now have $D(X_{n+1}) \leq c t D(X_n)$, which implies that $D(X_n) \leq (c t)^n D(B)$.  For $t< \alpha_3 := \min\{c, \alpha_2\}$, this implies that the diameter of $A_t = \lim_{n\rightarrow \infty} X_n$ is $0$; hence $A_t$ is a single point, say $x_t$.   Thus $x_t$ is the fixed point of the function $t f_i(x)+q$ for all $i$, which contradicts that $F_t$  is not quasi-linearity.
\end{proof}

\begin{definition}  \label{def:wc}  A hyperplane $H$ in $\R^d$ {\it separates} a compact set $S$ if $S \cap H = \emptyset$ but $S$ has non-empty intersection with  both half-spaces determined by $H$.
A compact subset $S$ of $\R^d$ is {\it strongly disconnected} if there exists a hyperplane that separates $S$.  
A compact set $S$ is
{\it weakly connected} if there is no such hyperplane.  For a compact set $S$, call a maximal weakly connected subset of $S$ a {\it weak component} of $S$, and call the convex hull of a weak component a {\it convex component} of $S$.  
Figure~\ref{fig:weak} shows the weak components of a set.  
\end{definition} 

\begin{figure}[htb] 
\vskip -3mm 
\includegraphics[width=13cm, keepaspectratio]{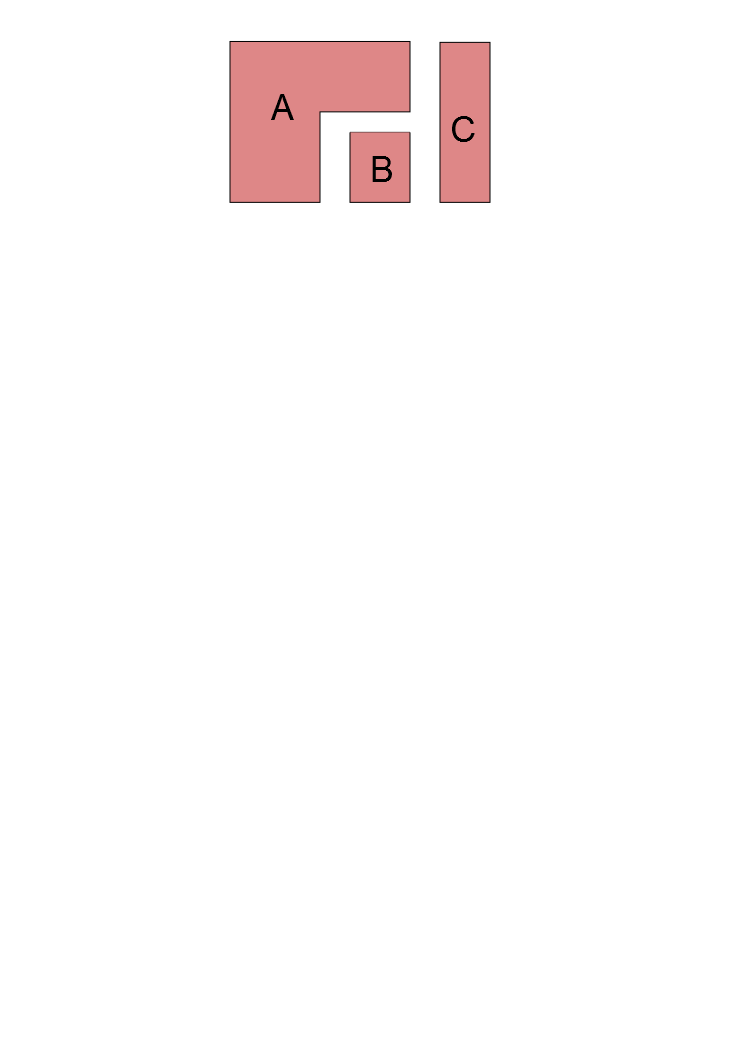}
\vskip -14.3cm
\caption{The sets $A\cup B $ and $C$ are the weak components.}
\label{fig:weak}
\end{figure}

\begin{proposition}  \label{prop:components}
The set of weak components of a compact set $S$ form a partition of $S$.
\end{proposition}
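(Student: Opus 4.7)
The plan is to identify each weak component with an equivalence class of a naturally defined relation on $S$. The backbone of the argument is a union lemma: if $\{T_\alpha\}$ is any family of weakly connected subsets of $S$ sharing a common point $x$, then $\bigcup_\alpha T_\alpha$ is weakly connected. Indeed, a separating hyperplane $H$ for the union would have to miss every $T_\alpha$, while $x$ lies in exactly one of the two open half-spaces determined by $H$; if some $T_\alpha$ had a point in the opposite half-space, that very hyperplane would separate $T_\alpha$, contradicting its weak connectedness.

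With this lemma in hand, I would define $x \sim y$ on $S$ to mean that there exists a weakly connected subset of $S$ containing both $x$ and $y$. Reflexivity holds because a singleton is trivially weakly connected (no hyperplane separates a single point), symmetry is immediate, and transitivity follows from the union lemma applied to two weakly connected sets overlapping in the shared middle point. The equivalence classes $[x]$ automatically partition $S$. Moreover, $[x]$ itself is weakly connected, being the union of all weakly connected subsets of $S$ that pass through $x$, and it is maximal: any weakly connected $T$ properly containing $[x]$ would force every $y \in T$ to satisfy $y \sim x$, contradicting $y \notin [x]$. Conversely, any maximal weakly connected subset is readily seen to coincide with the class of any one of its points. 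Hence the weak components are precisely the equivalence classes of $\sim$ and therefore partition $S$.

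The one subtlety worth flagging is that Definition~\ref{def:wc} introduces weak connectedness only for compact sets, so one should check that each $[x]$ is compact. For this I would verify that the closure of a weakly connected subset of $\R^d$ is weakly connected: if a hyperplane $H$ separated $\overline{T}$, then the openness of the half-spaces would pull the points of $\overline{T}$ back to points of $T$ in both half-spaces, producing a hyperplane that separates $T$ itself. Applied to $[x]$, the maximality established above forces $[x] = \overline{[x]}$, so $[x]$ is closed in the compact set $S$ and hence compact. This compactness/maximality interplay is the main (and really the only) delicate point; once it is settled, the partition property is bookkeeping.
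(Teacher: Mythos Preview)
Your proof is correct, and it takes a somewhat different (and more thorough) route than the paper's. The paper argues by contradiction directly on two overlapping weak components $C_1,C_2$: if they intersect but are distinct, maximality forces $C_1\cup C_2$ to be strongly disconnected, so a separating hyperplane $H$ exists; since each $C_i$ is weakly connected it must lie entirely in one open half-space of $H$, and then one checks that neither ``same side'' nor ``opposite sides'' is compatible with $C_1\cap C_2\neq\emptyset$ and $H$ separating the union. That is the whole argument---the paper does not address existence of maximal weakly connected sets, coverage of $S$, or compactness of the components.

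Your equivalence-relation construction via the union lemma yields the same disjointness fact (indeed the paper's argument is essentially your union lemma applied to two sets), but it also \emph{constructs} the weak components as the classes $[x]$, thereby giving existence and coverage for free, and you further verify that each class is closed (hence compact) by showing closures of weakly connected sets remain weakly connected. So your approach is more self-contained and handles the subtlety you flagged about Definition~\ref{def:wc} being stated only for compact sets; the paper's proof is shorter but tacitly assumes these points.
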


\begin{proof}  Assume, by way of contradiction, that $C_1$ and $C_2$ are weak components of $S$
with non-empty intersection.  By the definition of a weak component,  $C_1\cup C_2$ is not weakly connected
and hence there is a hyperplane $H$ that separates $C_1 \cup C_2$.  Since $C_1$ and $C_2$ are weakly connected, $C_1$ lies in one of the two halfspaces determines by $H$ as does $C_2$ .  If $C_1$ and $C_2$ are contained in the same
half-space, then $H$ does not separate $C_1 \cup C_2$, a contradiction.  If  $C_1$ and $C_2$ are contained 
different half-spaces, then $C_1$ and $C_2$ have empty intersection, again a contradiction.
\end{proof}

The proofs of the following lemmas  are straightforward.  For a compact set $C$ denote its convex hull by $conv\, C$.

\begin{lemma} \label{lem:sep}  Given an affine function $f$,  a compact set $C$ can be separated by a hyperplane if and only if $f(C)$ can  be separated  by a hyperplane. 
\end{lemma}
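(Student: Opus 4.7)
The plan is to exploit the symmetry of the biconditional: since every affine function in this paper's setting has a non-singular linear part (and is therefore a bijection of $\R^d$ whose inverse is again affine), it suffices to prove a single directional statement, namely that non-singular affine maps preserve strong disconnectedness, and then apply it to $f^{-1}$ with $f(C)$ in place of $C$ to obtain the converse.

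The key observation I would record first is that a non-singular affine map $f(x)=Lx+a$ sends hyperplanes to hyperplanes. Indeed, if $H=\{x:\langle v,x\rangle =c\}$, then $f(H)=\{y:\langle (L^{-1})^T v,\,y\rangle = c+\langle v, L^{-1}a\rangle\}$, which is again a hyperplane (here one uses $L$ non-singular so that $(L^{-1})^T v\neq 0$). Moreover, $f$ is a bijection of $\R^d$ that maps the two open halfspaces determined by $H$ onto the two open halfspaces determined by $f(H)$, since $f$ is a homeomorphism and $\R^d\setminus f(H)=f(\R^d\setminus H)$ has exactly two connected components that are images of the two components of $\R^d\setminus H$.

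Given these observations, the forward direction is immediate. Suppose $H$ is a hyperplane separating $C$, so that $C\cap H=\emptyset$ while $C$ meets both open halfspaces $H^+$ and $H^-$. By injectivity, $f(C)\cap f(H)=f(C\cap H)=\emptyset$; and since $C\cap H^{\pm}\neq\emptyset$, we have $f(C)\cap f(H^{\pm})\neq\emptyset$, showing $f(H)$ separates $f(C)$. For the reverse direction, I would simply observe that $f^{-1}$ is itself a non-singular affine function, so applying the forward direction to $f^{-1}$ and the set $f(C)$ yields that separability of $f(C)$ implies separability of $f^{-1}(f(C))=C$.

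There is really no main obstacle here; this is a routine fact of affine geometry. The one subtlety worth flagging is that non-singularity of $f$ is essential: a singular affine map (say, a projection onto a lower-dimensional subspace) can collapse two separated pieces of $C$ onto an overlapping set, destroying separability. Thus the lemma relies on the standing assumption (made at the outset of the paper) that every affine function in an IFS has a non-singular linear part.
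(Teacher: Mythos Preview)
Your argument is correct and complete. The paper itself does not supply a proof of this lemma, stating only that ``the proofs of the following lemmas are straightforward''; your write-up fills in exactly the routine affine-geometry details one would expect, and your remark that non-singularity of the linear part is essential is apt.
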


\begin{lemma} \label{lem:convex}   If $K$ is a compact set, $\{C_i\}$ a finite collection of compact sets,  and $f$ is an affine function, then 
\begin{enumerate}
\item $f(\text{conv } K) \subseteq \text{conv } f(K)$
\item $\bigcup_i \text{conv } C_i \subseteq \text{conv } (\bigcup_i C_i)$.
\end{enumerate}
\end{lemma}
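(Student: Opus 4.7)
The plan is to handle both inclusions directly from the definition of the convex hull as the set of finite convex combinations, using the defining property of an affine map to deal with part (1) and pure set-theoretic containment for part (2). No compactness is actually needed for either inclusion; it is only the convex-combination structure that matters.

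For part (1), I would let $y \in f(\text{conv }K)$, write $y = f(x)$ for some $x \in \text{conv }K$, and then express $x = \sum_{j=1}^m \lambda_j x_j$ with $x_j \in K$, $\lambda_j \geq 0$, and $\sum_j \lambda_j = 1$. Because $f$ is affine it preserves convex combinations, so
\[
y \;=\; f\!\left(\sum_{j=1}^m \lambda_j x_j\right) \;=\; \sum_{j=1}^m \lambda_j f(x_j),
\]
which is a convex combination of points of $f(K)$, hence lies in $\text{conv }f(K)$. (In fact one gets equality, but only the stated inclusion is needed.)

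For part (2), I would take any $x \in \bigcup_i \text{conv }C_i$, so $x \in \text{conv }C_{i_0}$ for some index $i_0$, and write $x = \sum_{j=1}^m \lambda_j x_j$ with $x_j \in C_{i_0}$. Since $C_{i_0} \subseteq \bigcup_i C_i$, each $x_j$ belongs to the union, so this same convex combination exhibits $x$ as an element of $\text{conv }\bigl(\bigcup_i C_i\bigr)$.

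Honestly, there is no real obstacle here; the two statements are book-keeping consequences of the definition of convex hull and the affine-combination property $f(\sum \lambda_j x_j) = \sum \lambda_j f(x_j)$. If one wished to be fancier, one could instead invoke the general fact that the convex hull is the smallest convex set containing the given set: for (1), $\text{conv }f(K)$ is convex and contains $f(K)$, and its preimage under the affine (hence affine-preimage-of-convex-is-convex) map $f$ is a convex set containing $K$, so it contains $\text{conv }K$, giving $f(\text{conv }K) \subseteq \text{conv }f(K)$; for (2), $\text{conv }\bigl(\bigcup_i C_i\bigr)$ is a convex set containing each $C_i$, hence contains each $\text{conv }C_i$, and therefore contains their union. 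Either presentation works; I would use the short convex-combination argument above for economy.
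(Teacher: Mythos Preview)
Your proof is correct; the paper itself omits the argument entirely, stating only that the proofs are straightforward, and your convex-combination verification is exactly the kind of routine check that remark points to.
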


\begin{lemma} \label{lem:SD}  If $K$ is compact set  and $\widehat K$ denotes the union of the convex components
of $K$, then 
\begin{enumerate}
\item $\widehat {\widehat K} = \widehat K$,
\item $\widehat K$ is strongly disconnected if and only if $K$ is strongly disconnected.
\end{enumerate}
\end{lemma}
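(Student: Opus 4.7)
The plan is to prove both parts through the single structural observation that the weak components of $\widehat K$ are exactly the convex components $C_i:=\operatorname{conv}(W_i)$ of $K$, where the $W_i$ are the weak components of $K$. Once that coincidence is established, Part 1 is immediate: each $C_i$ is convex, so it equals its own convex hull, and therefore $\widehat{\widehat K}=\bigcup_i C_i=\widehat K$.

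For the coincidence I first show that each $C_i$ is a weak component of $\widehat K$. It is weakly connected because it is convex. For maximality, given a weakly connected $V\subseteq\widehat K$ with $V\supseteq C_i$, I plan to form the auxiliary set $W^*:=\bigcup\{W_j:V\cap C_j\neq\emptyset\}\subseteq K$ and verify that $W^*$ is weakly connected. The key observation is that any hyperplane $H$ separating $W^*$ would also separate $V$: weak connectivity of each $W_j\subseteq W^*$ places $W_j$ in a single open half-space of $H$, and convexity then puts $C_j=\operatorname{conv}(W_j)$ in the same open half-space; since $V\subseteq\bigcup_{W_j\subseteq W^*}C_j$, the hyperplane $H$ is disjoint from $V$, while each open half-space contains some $W_j\subseteq W^*$ and hence a point of $V\cap C_j$. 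This would contradict the weak connectivity of $V$, so $W^*$ must be weakly connected. But a weakly connected subset of $K$ cannot properly contain any weak component of $K$, so $W^*=W_i$ for a single $i$; therefore $V$ avoids $C_j$ for every $j\ne i$, and since $V\subseteq\widehat K=\bigcup_j C_j$, this forces $V\subseteq C_i$ and hence $V=C_i$. In the reverse direction, given a weak component $V$ of $\widehat K$ and a point $p\in V\cap C_i$, the union $V\cup C_i$ is weakly connected (any separating hyperplane would miss both $V$ and $C_i$, force each into a single open half-space, and be obliged by $p$ to put them on the same side), so $V\supseteq C_i$ by maximality, and the first direction gives $V=C_i$.

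Part 2 is then fairly direct. For the forward direction, if $H$ separates $K$, then $H$ is disjoint from every $W_i$; weak connectivity puts each $W_i$ in a single open half-space, and the hull $C_i$ sits there too, so $\widehat K$ avoids $H$; since $K\subseteq\widehat K$ already meets both open half-spaces, so does $\widehat K$. Conversely, if $H$ separates $\widehat K$, the inclusion $K\subseteq\widehat K$ gives $K\cap H=\emptyset$ automatically; any witness point of $\widehat K$ in an open half-space lies in some $C_i$, which is connected and disjoint from $H$, hence wholly in that half-space, carrying the nonempty $W_i\subseteq K$ along with it.

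The main obstacle is the maximality step in the first half of Part 1. The natural hyperplane witnessing that $W_i\cup W_j$ is strongly disconnected in $K$ (for $i\neq j$) may slice through a third weak component $W_k$, so it need not directly witness that $C_i\cup C_j$ fails to be weakly connected inside $\widehat K$. The $W^*$ construction resolves this by pulling the problem back from $\widehat K$ into $K$, where the maximality of the $W_i$ is available by definition; the rest is bookkeeping using Lemma~\ref{lem:convex} and the convexity of open half-spaces.
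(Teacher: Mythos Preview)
Your argument is correct. The paper itself does not supply a proof of this lemma; it is listed among several lemmas whose ``proofs\ldots\ are straightforward'' and left to the reader. Your approach---showing that the weak components of $\widehat K$ are exactly the sets $C_i=\operatorname{conv}(W_i)$---is a clean structural route to Part~1, and your handling of the maximality step via the auxiliary set $W^*=\bigcup\{W_j:V\cap C_j\neq\emptyset\}$ correctly sidesteps the obstacle you identify (that a hyperplane witnessing the separation of $W_i$ from $W_j$ inside $K$ might cut through some third $W_k$). Your Part~2 argument is independent of the coincidence and is the direct verification one would expect. One small economy: once each $C_i$ is shown to be a weak component of $\widehat K$, the reverse direction is automatic from Proposition~\ref{prop:components} (weak components partition $\widehat K$) together with $\bigcup_i C_i=\widehat K$, so your explicit argument there, while correct, is not strictly necessary.
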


\begin{theorem}  \label{thm:wc}  Let $F_t$ be a semi-linear, but not linear, affine one-parameter family. 
There is a number $\tau \in (0,t_0]$  such that the attractor $A_t$ is weakly connected for $t\in (\tau,t_0)$
and  strongly disconnected for $t\in (0,\tau)$.   
Moreover, $\tau < t_0$ if the functions in $F_t$ are similarities.
\end{theorem}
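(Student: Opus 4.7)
The plan is to prove existence of the threshold $\tau$ by (i) establishing strong disconnectedness for small $t$, (ii) defining $\tau$ as the infimum of the weakly connected parameters, and (iii) proving the key upward monotonicity of weak connectivity for semi-linear families; the final assertion for similarity families will follow directly from Theorem~\ref{thm:sim}. By Remark~\ref{rem:noloss} I may assume $t_0=1$ throughout.

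For small $t$, Proposition~\ref{prop:semi} gives that each $q_i$ is the unique fixed point of $f_{(i,t)}$ and hence, by Proposition~\ref{prop:fixed}, lies in $A_t$ for every $t\in[0,t_0)$. Since the family is not linear, Proposition~\ref{prop:linear} says the $q_i$ are not all equal, so I can choose an affine hyperplane $H$ strictly separating $\{q_1,\ldots,q_N\}$ into two nonempty classes, with each $q_i$ at distance at least $2\delta>0$ from $H$. The map $f_{(i,0)}$ is the constant map to $q_i$, so the attractor at $t=0$ is the finite set $\{q_1,\ldots,q_N\}$, and the Lipschitz estimate in the proof of Theorem~\ref{thm:continuity} extends to $t=0$ to yield $A_t\to\{q_1,\ldots,q_N\}$ in Hausdorff metric as $t\to 0^+$. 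Thus for $t$ small enough $A_t$ lies within distance $\delta$ of $\{q_i\}$, avoids $H$, and meets both open halfspaces, so $A_t$ is strongly disconnected. Define $\tau:=\inf\{t\in(0,t_0):A_t\text{ is weakly connected}\}$ with $\inf\emptyset:=t_0$; then $\tau>0$.

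The main obstacle is showing upward monotonicity: if $A_{t^*}$ is weakly connected for some $t^*<t_0$, then $A_t$ is weakly connected for every $t\in[t^*,t_0)$. The key algebraic identity is the semi-linear decomposition
\[
f_{(i,t)}(x)\;=\;\bigl(1-t/t^*\bigr)\,q_i\,+\,(t/t^*)\,f_{(i,t^*)}(x),
\]
which for $t\le t^*$ writes $f_{(i,t)}(x)$ as a convex combination of the fixed point $q_i\in A_{t^*}$ and $f_{(i,t^*)}(x)$. Setting $K:=\mathrm{conv}(A_{t^*})$, each $f_{(i,t^*)}(K)$ is a convex set containing $q_i$, and the convex-combination identity gives $f_{(i,t)}(K)\subseteq f_{(i,t^*)}(K)$, hence $F_t(K)\subseteq F_{t^*}(K)$ and $A_t\subseteq F_{t^*}(K)$ for all $t\le t^*$. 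To convert this containment into monotonicity of strong disconnectedness I would invoke the convex-component structure: by Lemma~\ref{lem:sep} every affine map sends a weakly connected set to a weakly connected set, so each $f_{(i,t^*)}$ sends a weak component of $A_{t^*}$ into a single weak component, yielding $F_{t^*}(\widehat{A_{t^*}})\subseteq\widehat{A_{t^*}}$; and by Lemma~\ref{lem:SD} the strong disconnectedness of $A_{t^*}$ by a hyperplane $H$ is inherited by $\widehat{A_{t^*}}$. The hard and delicate step is then to show, via case analysis on weak components, that for each $i$ the image $f_{(i,t^*)}(A_{t^*})$ lies on a single side of $H$ (equivalently, that each $\mathrm{conv}(f_{(i,t^*)}(A_{t^*}))=f_{(i,t^*)}(K)$ avoids $H$), from which $F_{t^*}(K)$ and hence $A_t$ both avoid $H$, making $A_t$ strongly disconnected by the same hyperplane. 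This weak-component bookkeeping is the technical heart of the proof.

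Once upward monotonicity is established, the theorem follows immediately: strong disconnectedness of $A_t$ on $(0,\tau)$ and weak connectedness on $(\tau,t_0)$ are direct consequences of the definition of $\tau$ together with monotonicity. For the final assertion, if the $f_i$ are similarities then Theorem~\ref{thm:sim} supplies a real $\tau'<t_0$ such that $A_t$ is connected—and hence weakly connected—for every $t\in(\tau',t_0)$, so $\tau\le\tau'<t_0$.
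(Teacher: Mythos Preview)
Your outline is right---small-$t$ strong disconnectedness, a threshold, and a monotonicity lemma---and the convex-combination identity $f_{(i,t)}(x)=(1-t/t^*)q_i+(t/t^*)f_{(i,t^*)}(x)$ is exactly the algebraic engine the paper uses. But the monotonicity paragraph is internally inconsistent and contains a claim that is false as stated.

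First the inconsistency: you announce that you will prove upward monotonicity of weak connectedness (``if $A_{t^*}$ is weakly connected then $A_t$ is weakly connected for $t\ge t^*$''), but then work with $t\le t^*$ and immediately refer to ``the strong disconnectedness of $A_{t^*}$ by a hyperplane $H$''. You are evidently trying the contrapositive (strong disconnectedness propagates downward), which is the right direction, but the labels are tangled.

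More seriously, the step you flag as ``hard and delicate''---that each $f_{(i,t^*)}(A_{t^*})$ lies on a single side of $H$---is not true in general, and no amount of weak-component bookkeeping on $A_{t^*}$ will save it. If some $f_{(i,t^*)}$ is close to an isometry (say a rotation), it can carry one weak component of $A_{t^*}$ to one side of $H$ and another weak component to the other side; then $f_{(i,t^*)}(K)=\mathrm{conv}\,f_{(i,t^*)}(A_{t^*})$ meets $H$, and your containment $A_t\subseteq F_{t^*}(K)$ gives nothing. The point is that $A_{t^*}$ itself is \emph{not} weakly connected, so Lemma~\ref{lem:sep} does not force $f_{(i,t^*)}(A_{t^*})$ to sit on one side.

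The paper avoids this trap by working not with the attractor but with finite iterates of a convex body. It fixes a convex $K$ with $F_{t'}(K)\subset K^o$, sets $K_n=F_{t'}^n(K)$ and $C_n=F_t^n(K)$, and locates the \emph{first} level $n$ at which $K_n$ becomes strongly disconnected. The crucial gain is that $\widehat{K_{n-1}}$ is then a single convex set, so each $f_{(i,t')}(\widehat{K_{n-1}})$ is genuinely weakly connected and lands on one side of the separating hyperplane. An induction gives $\widehat{C_j}\subseteq\widehat{K_j}$ for $j\le n$, and since every convex component of $\widehat{K_n}$ contains a piece of $\widehat{C_n}$ (both coming from the connected $\widehat{K_{n-1}}\supseteq\widehat{C_{n-1}}$), the strong disconnection of $\widehat{K_n}$ forces that of $\widehat{C_n}$, hence of $A_t$. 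Replacing your direct attack on $A_{t^*}$ with this first-level-of-disconnection argument is what is missing.
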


\begin{proof}  Let $F_t  = \{  t\, f_i(x) + q_i \, : 1\leq i \leq N \}$.  
Theorem~\ref{thm:jsr} guarantees that, for $t \in [0,t_0)$, there is a convex body $K$ (depending on $t$) such that $F_t(K) \subset K^o$.   
That $F_t$ is semi-linear implies, by Proposition~\ref{prop:semi}, that the fixed point of  $t\, f_i(x) + q_i$ is $q_i$ for all $t \in [0,t_0)$. 
That $F_t$ is not linear implies that not all  the $q_i$ are equal.  
Let $H$ be a hyperplane that separates the set $\{q_i \, : i=1,2,\dots , N\}$  of fixed points. 
If $t$ is sufficiently small, then the set $\{t f_i(K) +q_i \, : \,  i=1,2,\dots , N\}$ is also separated by $H$.  
Since $K \supset F(K) \supset F^{2}(K) \supset \cdots$, the attractor $A_t$ of $F_t$ is also separated by $H$.  Therefore  $A_t$ is strongly disconnected for $t$ sufficiently small.  Let $\tau$ be the supremum of those $t$ such that $A_t$ is strongly disconnected. 

Since $A_t$ is weakly connected for $t\in (\tau,t_0)$, it only remains to show that $A_t$ is 
strongly disconnected for $t \in (0, \tau)$.  For this it is sufficient to show the following: if $A_{t'}$ is strongly disconnected and $t < t'$,
then  $A_{t}$ is also strongly disconnected.
Let $K$ be the convex body such that $F_{t'}(K) \subset K^o$; let $K_n = F_{t'}^{n}(K)$; and let 
$C_n = F_{t}^{n}(K)$.  Thus,  with respect to the Hausdorff metric, $K_n \rightarrow A_{t'}$ and
 $C_n \rightarrow A_{t}$. Note that the fixed points $q_i, \, i=1,2, \dots, N,$ all lie in $K$.  
 Because $K$ is convex, if $K$ contains the origin and $L$ is any linear map, then 
$t L(K) \subset t' L(K)$ if $t < t'$.  It then follows from the definition of semi-regular that $t f_i(K) + q_i \subset
t ' f_i(K) + q_i $.
Since $C_0 = K$, we have $C_1 = F_{t}(C_0) = F_{t} (K) = \bigcup_{i=1}^N  t f_i(K) + q_i  \subset 
\bigcup_{i=1}^N t' f_i(K) + q_i  = F_{t'} (K) = K_1 \subset K = C_0$, 
which implies inductively that 
$C_0 \supset  {C_1} \supset  {C_2}\supset  {C_3} \supset\cdots $.   Since  $A_{t'}$ is assumed strongly disconnected, there is a least natural number  $n$ such that $K_n$ is strongly disconnected.  We claim that $C_n$
is also strongly disconnected, which would  imply that  $A_{t}$ is strongly disconnected, thus
proving the theorem.   

 Denote by $\widehat {K_n}$ and $\widehat {C_n}$ the union of the convex components
of $K_n$ and $C_n$ respectively. To simplify notation, let $f_{i,t}(x) := t f_i(x)+q_i$.  
To prove the claim, first note that $\widehat {C_{j}}$  and $\widehat {K_{j}}$ are connected for $j=1,2,\dots, n-1$.  For these values of $j$, by Lemma~\ref{lem:sep}, $f_{i,t'}(K_{j})$ and $f_{i,t}(C_{j})$ are weakly connected 
for all $i$.  We have previously in this proof shown that $C_1 \subset K_1$, hence $\widehat C_1 \subset
\widehat K_1$.  Proceeding inductively, 
if $ \widehat {C_j} \subseteq 
 \widehat {K_j}$,  then
\[C_{j+1} \subseteq \bigcup_{i=1}^N f_{i,t}(\widehat{C_j} ) \subseteq \bigcup_{i=1}^N f_{i,t}(\widehat{K_j} )
\subseteq\bigcup_{i=1}^N f_{i,t'}(\widehat{K_j} ) \subseteq \widehat {K_{j+1}}. \]
The last inclusion above is true by Lemma~\ref{lem:convex} because $f_{i,t'}(K_j) 
\subseteq K_{j+1}$ implies that $f_{i,t'}(\widehat {K_j}) = \widehat {f_{i,t'}(K_j) } 
\subset \widehat{K_{j+1}}$ for all $i$.  The second to 
last inclusion is true because $K_j$ contains the fixed points of all $i$ and, using statement (3) of Lemma~\ref{prop:props},
as in the paragraph above, $f_{i,t}(K_j) \subseteq f_{i,t'}(K_j)$ for all $i$.  
From $C_{j+1} \subseteq \widehat {K_{j+1}}$ it follows from Lemma~\ref{lem:SD} that
$\widehat {C_{j+1}} \subseteq \widehat {\widehat {K_{j+1}}}  =  \widehat {K_{j+1}}$.
Applying this  inclusion to the case $j = n-1$  gives $\widehat {C_{n}} \subseteq  \widehat {K_{n}}$.
Since $K_n$ is strongly disconnected, so is $\widehat {K_{n}}$ by Lemma~\ref{lem:SD}.  But because
$\widehat {C_{n-1}}$  and $\widehat {K_{n-1}}$ are both connected and $\widehat {C_{n-1}} \subseteq\widehat {K_{n-1}}$, each connected component of $\widehat {K_{n}}$ contains a connected component of  $\widehat {C_{n}}$. 
Therefore  $\widehat {K_{n}}$ strongly disconnected implies that  $\widehat {C_{n}}$ is also strongly disconnected,
which is the result required.

That $\tau < t_0$ for a similarity family follows from Theorem~\ref{thm:sim}, which states that there is a $\tau' < t_0$ such that
$A_t$ is connected, and hence weakly connected, for $t\in (\tau ', t_0)$.  Therefore $\tau < t_0$.   
\end{proof}

\section{Threshold for the Appearance of an Attractor with Non-Empty Interior} \label{sec:interior}

\begin{lemma} \label{lem:two}
Let $F$ be an affine IFS and $G \subset F$.  If $G$ has an attractor, then so does $G$, and $A_F \subseteq A_G$.  In particular,
if  $A_F$ has non-empty interior, then $A_G$ also has non-empty interior.  
\end{lemma}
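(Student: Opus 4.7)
The printed statement contains evident typos (``so does $G$'' appears twice, and the inclusion direction is inconsistent with the printed final sentence). Reading it in the only way consistent with the advertised conclusion ``if $A_F$ has non-empty interior, then $A_G$ also has non-empty interior'' forces the intended hypothesis to be $F\subseteq G$ (that is, $F$ is the sub-IFS, $G$ is the larger IFS) together with the assumption that $G$ has an attractor. Under this reading one must produce the attractor of $F$ and then show $A_F\subseteq A_G$, from which the interior consequence is immediate.

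The first step is to produce $A_F$ from the existence of $A_G$. I invoke Theorem~\ref{thm:jsr}: since $G$ has an attractor, $G$ is contractive, i.e.\ every $g\in G$ is a contraction with respect to some metric Lipschitz equivalent to the Euclidean one. Because $F\subseteq G$ as a set of maps, every $f\in F$ is a contraction in that same metric, so $F$ is contractive, and Theorem~\ref{thm:jsr} gives $A_F$.

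The second step is the inclusion $A_F\subseteq A_G$. Using that $A_G=G(A_G)$ (the fixed-point property \eqref{eq:fixed}) and $F\subseteq G$,
\[ F(A_G)=\bigcup_{f\in F} f(A_G)\;\subseteq\;\bigcup_{g\in G} g(A_G)=G(A_G)=A_G. \]
Iterating, $F^{k}(A_G)\subseteq A_G$ for every $k\geq 0$. By \eqref{eq:lim}, the Hausdorff limit of $F^{k}(A_G)$ is $A_F$ (the limit being independent of the starting compact set), so $A_F$ is the limit of a nested sequence of subsets of the compact set $A_G$, whence $A_F\subseteq A_G$.

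The interior consequence is then immediate: if $A_F$ contains an open ball $B$, the inclusion $A_F\subseteq A_G$ gives $B\subseteq A_G$, so $A_G$ has non-empty interior. There is no real obstacle here; once one decodes the intended statement, each step is a one-line appeal to Theorem~\ref{thm:jsr} and to the independence of the limit in \eqref{eq:lim} from the initial compactum. The only subtlety worth flagging is the mild point that one is allowed to take $K=A_G$ in \eqref{eq:lim}; this is built into the definition of the attractor.
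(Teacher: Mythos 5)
Your proof is correct and is essentially the paper's own argument: the paper likewise gets the sub-IFS's attractor from Theorem~\ref{thm:jsr} (it invokes part (3), topological attractivity, where you invoke part (2), contractivity) and then obtains the inclusion by iterating the smaller Hutchinson operator on the larger attractor and passing to the nested Hausdorff limit. The only difference is notational: the paper keeps the hypothesis $G\subset F$ and its proof actually concludes $A_G\subseteq A_F$ (so the inclusion in the printed statement is one more typo), whereas you resolved the typos by swapping the roles of $F$ and $G$; the mathematics is identical up to relabeling.
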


\begin{proof}   If $F$ has an attractor, then  by part (3) of Theorem~\ref{thm:jsr} $G$ also has an attractor.     Moreover, since $G^n(A_F) \subseteq A_F$ for all $n$, we
have $A_G = \lim_{n\rightarrow \infty} G^n(A_F) \subseteq A_F$.    
\end{proof}

\begin{theorem}  \label{thm:empty}   If $F_t = \{t f_i(x) + q_i\, : \, 1\leq i \leq N\}$  is a one-parameter affine family, 
then there is a real number $\tau >0$ such that $A_t$ has empty interior for $t\in (0,\tau)$.
\end{theorem}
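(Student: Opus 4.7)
The plan is to use a simple Lebesgue-measure contraction argument. Write $L_i$ for the linear part of $f_i$, so that the function $f_{(i,t)}(x) := t f_i(x) + q_i$ has linear part $tL_i$ and hence Jacobian determinant of absolute value $t^d\lvert \det L_i\rvert$. For any $t \in [0,t_0)$ the attractor $A_t$ exists by Corollary~\ref{cor:existence} and satisfies the fixed-point identity $A_t = \bigcup_{i=1}^N f_{(i,t)}(A_t)$.

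Set $M := \sum_{i=1}^N \lvert \det L_i\rvert$. Because each $L_i$ is assumed non-singular, we have $M>0$. By subadditivity of $d$-dimensional Lebesgue measure $\mathrm{vol}$ and the change-of-variables formula applied to each affine piece,
\[
\mathrm{vol}(A_t) \;\leq\; \sum_{i=1}^N \mathrm{vol}\bigl(f_{(i,t)}(A_t)\bigr) \;=\; t^d\, M \cdot \mathrm{vol}(A_t).
\]
If $t^d M < 1$, this inequality forces $\mathrm{vol}(A_t) = 0$, and a compact set of Lebesgue measure zero in $\mathbb{R}^d$ has empty interior.

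Taking $\tau := \min\{t_0,\, M^{-1/d}\} > 0$ therefore yields an interval $(0,\tau)$ on which $A_t$ exists and has empty interior, which is exactly the conclusion of the theorem. There is essentially no obstacle here; the only point of care is to intersect with $[0,t_0)$ so that the attractor is guaranteed to exist in the first place, and to note that the non-singularity hypothesis built into the definition of an affine function (stated in the introduction) is what makes $M$ strictly positive.
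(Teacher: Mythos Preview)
Your proof is correct and uses essentially the same Lebesgue-measure contraction argument as the paper. The only cosmetic difference is that the paper first invokes Lemma~\ref{lem:two} to reduce to the case of two functions before writing the inequality $\mu(A_t)\leq t^d(\det L_f+\det L_g)\,\mu(A_t)$, whereas you apply the inequality directly with the full sum $M=\sum_{i=1}^N|\det L_i|$; your version is in fact cleaner, since the reduction step is unnecessary for this argument.
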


\begin{proof}  By Lemma~\ref{lem:two} it suffices to prove the theorem when $F_t$ consists of two functions $f_t(x) = t f(x) +p$ and $g_t = t g(x) + q$.  
Let $\mu(B)$ denote the Lebesgue  measure of a compact set $B$.  Then $\mu( f_t(B) ) = t^d \det(L_f) \mu(B)$, where $L_f$ is the linear part of $f$; similarly
for $g_t$.  Since $A_t = f_t(A_t)\cup g_t(A_t)$, we have $\mu(A_t) \leq  \mu(f_t(A_t)) +\mu( g_t(A_t)) =  t^d (\det(L_f) + \det(L_g)) \mu(A_t)$.  If $A_t$ has
non-empty interior, then $\mu(A_t) > 0$, which implies that $ t^d (\det(L_f) + \det(L_g)) > 1$.  This is not possible if $t$ is sufficiently small.  
\end{proof}

It is not hard to construct affine families $F_t$ for which there is a proper affine subspace $W \subsetneq \R^d$ that is invariant under $F_t$ for all $t\in [0,t_0)$.  For such a family the
attractor satisfies $A_t \subset W$, which implies, trivially, that the interior of $A_t$ is empty.  Call a family $F_t$ 
with such an invariant proper affine subspace {\bf degenerate}, otherwise
{\bf non-degenerate}.  

\begin{example}[An Non-Degenerate One-Parameter Affine Family $F_t$ such that $A_t$ has Empty Interior for all $t\in [0,t_0)$]  
\label{ex:allempty}
 Consider the one-parameter similarity IFS family 
\begin{equation} \label{eq:empty} \begin{aligned}
 F_t &= \Bigg \{ t f_1\begin{pmatrix} x\\ y \end{pmatrix}, \; t f_2\begin{pmatrix} x\\ y \end{pmatrix}  + \begin{pmatrix} 1 \\ 0 \end{pmatrix}\Bigg \}, \qquad \qquad \text{where} 
 \\ \\ 
f_1\begin{pmatrix} x\\ y \end{pmatrix} =  &\begin{pmatrix} 0 & -1 \\ 1 & 0 
\end{pmatrix}\,\begin{pmatrix} x\\ y \end{pmatrix}, \qquad
 f_2\,\begin{pmatrix} x\\ y \end{pmatrix} =\alpha \,  \begin{pmatrix} x\\ y \end{pmatrix} - \alpha\,\begin{pmatrix} 1 \\ 0 \end{pmatrix}
\end{aligned} \end{equation}
and $0<\alpha <1/3$. Note that the threshold $t_0$ for the existence of an attractor is $1$ in this example.  

\begin{theorem}  \label{thm:empty2} The attractor $A_t$ of the family $F_t$ of Equation~\eqref{eq:empty} has empty interior for all $t\in [0,t_0)$. 
\end{theorem}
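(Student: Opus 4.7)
The plan is to show that the two-dimensional Lebesgue measure $\mu(A_t) = 0$ for every $t \in [0, 1)$; since $A_t$ is compact, this gives the required empty interior. Identifying $\mathbb{R}^2$ with $\mathbb{C}$, the IFS becomes $\{h_{1,t}(z) = itz,\ h_{2,t}(z) = t\alpha z + (1 - t\alpha)\}$, two similarities of ratios $t$ and $t\alpha$ with fixed points $0$ and $1$. By Corollary~\ref{cor:existence2} the threshold is $t_0 = 1$, and by Proposition~\ref{prop:semi} the family is semi-linear. The first step is the direct Lebesgue measure bound of Theorem~\ref{thm:empty}: $\mu(A_t) \leq [t^2 + (t\alpha)^2]\mu(A_t)$ forces $\mu(A_t) = 0$ whenever $t^2(1+\alpha^2) < 1$, handling the initial range $t \in [0, 1/\sqrt{1+\alpha^2})$, which since $\alpha < 1/3$ contains $[0, 3/\sqrt{10})$.

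For $t$ in the remaining range $[1/\sqrt{1+\alpha^2}, 1)$ I would exploit the $\pi/2$-rotational structure of $h_{1,t}$. Since $h_{1,t}$ contracts toward the origin, iterating $A_t = h_{1,t}(A_t) \cup h_{2,t}(A_t)$ yields in the Hausdorff limit
\[
A_t \;=\; \{0\} \cup \bigcup_{k=0}^{\infty} B_k, \qquad B_k := h_{1,t}^k\bigl(h_{2,t}(A_t)\bigr),
\]
where each blob $B_k$ is a similarity image of $A_t$ of contraction ratio $t^{k+1}\alpha$, centred at the spiral point $i^k t^k$. Grouping the blobs modulo $4$ produces four chains $\mathcal{C}_j := \bigcup_{m \geq 0} B_{4m+j}$ for $j \in \{0,1,2,3\}$, each satisfying the self-similar identity $\mathcal{C}_j = B_j \cup t^4 \mathcal{C}_j$ because $h_{1,t}^4 = t^4 I$ is pure scaling about the origin. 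A self-consistent bounding-box calculation, using $\mathrm{Re}\,h_{1,t}(z) = -t\,\mathrm{Im}(z)$ and $\mathrm{Im}\,h_{1,t}(z) = t\,\mathrm{Re}(z)$ together with the analogous identities for $h_{2,t}$, gives $A_t \subseteq [-t^2, 1] \times [-t^3, t]$. The condition $\alpha < 1/3$ is expected to force the angular half-width of each $B_k$ seen from the origin below $\pi/4$, placing the four chains in pairwise disjoint sectors around the four semi-axes, and a careful within-chain estimate for $\mu(\mathcal{C}_j)$, exploiting the identity $\mathcal{C}_j = B_j \cup t^4 \mathcal{C}_j$, would then close the measure argument.

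The main obstacle is twofold. First, the crude box-diagonal estimate $\max_{z \in A_t}|z - 1| \leq \sqrt{(1+t^2)^2 + t^2}$ gives the required angular disjointness only for $\alpha < 1/\sqrt{10}$, falling just short of $\alpha < 1/3$; one instead needs the tighter bound $\max_{z \in A_t}|z - 1| \leq 1 + t^2$, whose sharpness is attained at the orbit point $-t^2 = h_{1,t}^2(1,0)$ and whose proof requires ruling the bounding-box corners out of $A_t$ by an induction over the Hutchinson-generated tree of points. Second, a na\"ive summation of blob measures within each chain still collapses to the bound $\mu(A_t) \leq \mu(A_t)\,\alpha^2 t^2/(1-t^2)$ of the first paragraph; strengthening this inequality requires exploiting actual geometric nesting $t^4 B_j \subseteq B_j$ (or an equivalent structural contraction) within each chain as $t \uparrow 1$, rather than merely summing overlapping blob measures, and it is here that the threshold $\alpha = 1/3$ is presumably sharp.
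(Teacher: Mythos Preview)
Your proposal is incomplete, and you have correctly identified the gap yourself. The measure decomposition into chains $\mathcal{C}_j$ does not close: as you note, summing the blob measures within a chain reproduces exactly the crude bound $\mu(A_t)\le \frac{t^2\alpha^2}{1-t^2}\mu(A_t)$, which is vacuous for $t^2(1+\alpha^2)\ge 1$. The rescue you suggest, namely a geometric nesting $t^4 B_j\subseteq B_j$, would indeed give $\mu(A_t)\le 4\alpha^2 t^2\mu(A_t)$ and hence $\mu(A_t)=0$ for all $t<1$ when $\alpha<1/2$; but this containment is false for intermediate $t$. The centre of $t^4B_j$ sits at distance $t^j(1-t^4)$ from the centre of $B_j$, while $B_j$ has diameter of order $t^{j+1}\alpha$, so the inclusion fails once $1-t^4>Ct\alpha$ for the relevant constant $C$. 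There is a genuine range of $t$ (roughly where $t^2(1+\alpha^2)\approx 1$) for which neither your first nor your second mechanism applies, and nothing in the proposal bridges it. The angular-separation shortfall ($1/\sqrt{10}$ versus $1/3$) is a symptom of the same problem: for $t$ near $1$ the attractor is connected, so adjacent blobs \emph{must} overlap and no clean sectorial disjointness can hold.

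The paper's argument is entirely different and avoids Lebesgue measure. It exploits the fact that $h_{1,t}$ is a scaled rotation about the origin, hence maps circular arcs centred at $0$ to circular arcs centred at $0$, multiplying arclength by $t<1$. Supposing $A_t$ has interior, one finds a circular arc $\gamma\subset A_t$ centred at $0$ of maximal length. If $\gamma\subseteq h_{1,t}(A_t)$ then $h_{1,t}^{-1}(\gamma)\subset A_t$ is a longer such arc, a contradiction; so $\gamma$ must meet $h_{2,t}(A_t)$. But $h_{2,t}(A_t)$ lies in a small disk near $1$ (of radius about $t\alpha$), which forces the radius $r$ of $\gamma$ to satisfy simultaneously $r<t\alpha$ (from the contraction ratio of $h_{2,t}$ and the arc that maps to it) and $r>1-t\alpha-t^2\alpha$ (from the location of $h_{2,t}(A_t)$). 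These are incompatible precisely when $\alpha<1/3$. The ``maximal arc'' invariant is the key idea your approach is missing; it converts the problem from an additive measure estimate into a single geometric inequality that is sharp at $\alpha=1/3$.
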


\begin{proof}  It is easily verified that $F_t(B_t) \subset B_t$, where $B_t$ is the compact set consisting of four solid quadrilaterals depicted in Figure~\ref{fig:empty}.  
Denote by $g_1$ and $g_2$ the two functions in $F_t$.  Note that $g_1(B_t) = B_t$ and $g_2(B_t)$, shown in red, is contained in the disk of radius $t \alpha$ centered at 
$(1-t \alpha),0)$.  

By way of contradiction, assume that $A_t$ has non-empty interior.  Then there is a ball contained in the interior, and hence there is an arc of a circle centered
at the origin contained in $A_t$.  Let $\gamma$ be the arc of greatest length contained in $A_t$.  Denote by $g_1$ and $g_2$ the two functions in $F_t$.  Since $g_1$ and $g_2$
are similarities, there must exist two circular arcs $\gamma_1$ and $\gamma_2$ in $A_t$ such that  $\gamma \subseteq g_1(\gamma_1) \cup g_2(\gamma_2)$.  Since 
 $g_1$ and $g_2$ are contractions, the radii $r_1, r_2$ of the circles corresponding to  $\gamma_1$ and $\gamma_2$  must be greater than the radius $r$ of the circle corresponding to  $\gamma$.  Either  $\gamma \subseteq g_1(\gamma_1)$ or $\gamma_2 \neq \emptyset$.   If $\gamma_2 \neq \emptyset$, then (1) $\gamma$ is contained in the
rightmost quadrilateral of $B_t$ and (2) by the paragraph above $r < r_2 \leq t \alpha$, and (3) for $\gamma$ to be in the range of $g_2$ it must be the case that 
$r> 1-t\alpha - t^2 \alpha$.
But $t\alpha >  1-t\alpha - t^2 \alpha$ implies that $\alpha > 1/3$, a contradiction.  Therefore $\gamma \subseteq g_1(\gamma_1)$.  In this case $\gamma$ is the
image of an arc in $A_t$ of length greater than that of $\gamma$, a contradiction.   
\end{proof}

\begin{figure}[htb]  
\includegraphics[width=8cm, keepaspectratio]{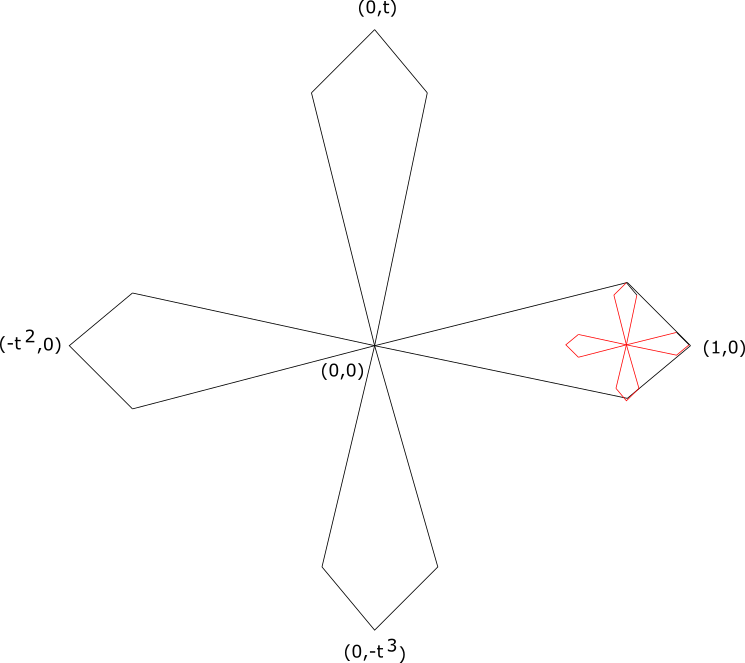}
\caption{The set $B_t$ in the proof of Theorem~\ref{thm:empty2}.}
\label{fig:empty}
\end{figure}

\end{example}

\begin{lemma}  \label{closure} If the attractor $A$ of an affine IFS has non-empty interior, then  $A$ is the closure of its interior.
\end{lemma}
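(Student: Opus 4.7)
The inclusion $\overline{A^o} \subseteq A$ is automatic from $A$ being closed, so the task is to establish $A \subseteq \overline{A^o}$. My plan is to combine two ingredients: the contractive structure supplied by Theorem~\ref{thm:jsr}, and the fact that every $f \in F$, being affine with a nonsingular linear part, is a homeomorphism of $\R^d$, hence an open map.

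First I would check that each $f\in F$ maps $A^o$ into $A^o$. Indeed, $f(A^o)$ is open in $\R^d$ (by openness of $f$) and contained in $f(A)\subseteq A$, so $f(A^o) \subseteq A^o$; iterating, $f_{i_1}\circ\cdots\circ f_{i_k}(A^o)\subseteq A^o$ for every finite word $i_1\cdots i_k$. Next, by Theorem~\ref{thm:jsr} there is a metric $d$ Lipschitz-equivalent to the Euclidean metric and a constant $c<1$ such that every $f\in F$ is a $c$-contraction in $d$. Consequently
\[
\operatorname{diam}_d\bigl(f_{i_1}\circ\cdots\circ f_{i_k}(A)\bigr)\ \leq\ c^k\, \operatorname{diam}_d(A)\ \longrightarrow\ 0.
\]

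To conclude, fix $x\in A$ and use $A=\bigcup_{i=1}^N f_i(A)$ recursively to choose a sequence $(i_k)$ with $x\in f_{i_1}\circ\cdots\circ f_{i_k}(A)$ for every $k$; the diameter estimate then forces $\{x\}=\bigcap_k f_{i_1}\circ\cdots\circ f_{i_k}(A)$. Pick any $y\in A^o$, which is nonempty by hypothesis, and set $x_k := f_{i_1}\circ\cdots\circ f_{i_k}(y)$. The opening observation gives $x_k\in A^o$ for all $k$, while both $x$ and $x_k$ lie in $f_{i_1}\circ\cdots\circ f_{i_k}(A)$, so $x_k\to x$ in the Euclidean metric. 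Hence $x\in \overline{A^o}$.

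There is no serious obstacle here; the only point that must not be overlooked is that each $f\in F$ is open on all of $\R^d$, which is exactly what gives $f(A^o)\subseteq A^o$ and which follows from the standing nonsingularity assumption on the linear parts.
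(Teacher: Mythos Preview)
Your proof is correct. Both you and the paper begin with the same key observation---that each $f\in F$, having nonsingular linear part, is open and hence carries $A^o$ into $A^o$---but you then diverge in how the conclusion is reached. The paper argues more abstractly: from $f(A^o)\subseteq A^o$ and continuity it follows that $F(\overline{A^o})\subseteq \overline{A^o}$, and since $\overline{A^o}$ is a nonempty compact set, the defining limit property of the attractor gives $A=\lim_{n\to\infty}F^n(\overline{A^o})\subseteq \overline{A^o}$. Your route is more explicit and constructive: you code an arbitrary $x\in A$ by a sequence of shrinking cylinder sets $f_{i_1}\circ\cdots\circ f_{i_k}(A)$ and push a single interior point $y$ through the same word to produce an interior sequence converging to $x$. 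The paper's argument is shorter and leans directly on the attractor's defining property; yours unpacks that property into an explicit approximation, which makes the mechanism visible and would adapt well to settings where one wants a concrete interior sequence rather than an inclusion.
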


\begin{proof}  By definition, each function in the IFS $F$ has non-singular linear part.  Therefore each function in $F$ takes the interior of $A$ into the interior of $A$.  
If a point $x$ lies in the closure of the interior of $A$, call it $\overline A$, then $f(x)$ must lie in  $\overline A$.  Therefore the Hutchinson operator $F$ takes $\overline A$ into $\overline A$, 
which implies that $A = \lim_{n\rightarrow \infty} F^n(\overline A) \subseteq \overline A \subseteq A$.  
\end{proof}

\begin{definition} \label{def:tame}
By a {\it cone} in $\R^d$ in this paper, we mean a right circular non-degenerate (not a line segment) cone. 
 For a compact set $K$ with non-empty interior, define a boundary point  $x$ to be a {\it cusp} of $K$ if there is no cone with apex at $x$ contained in $K$.
For an affine IFS $F$ that has an attractor $A$, Theorem~\ref{thm:jsr} implies that each functions in $F$ has a unique fixed point
that is contained in $A$.   Let $Z$ denote the set of fixed points of the functions in $F$.  
   Call $F$  {\bf tame} if not all points in $Z$ are cusps of $A$.  In particular, if there is even one point of $Z$ that lies in the interior of $A$, then $F$ is tame.  (We have no example of an IFS that has an attractor with non-empty interior  and is not
tame.)  Call a one-parameter affine family $F_t$ {\bf tame}  if  $F_t$ is tame for all $t$ such that the attractor $A_t$ has
non-empty interior.  
\end{definition}

\begin{theorem}  Let $F_t$ be a tame, semi-linear one-parameter similarity family.
 Then there is a real number $t_2 >0$ such that  $A_t$  has empty interior for $t\in [0,t_2)$ and non-empty interior for 
$t\in (t_2,t_0 )$.  
\end{theorem}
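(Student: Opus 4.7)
I would define $t_2 := \sup\{t \in [0, t_0) : A_t \text{ has empty interior}\}$, which is positive by Theorem~\ref{thm:empty}. If $t_2 = t_0$, the second interval is empty and the statement holds vacuously there; otherwise $t_2 < t_0$. The theorem then reduces to a single monotonicity claim: for $0 < t' < t < t_0$, if $A_{t'}$ has non-empty interior, then so does $A_t$. Granting this, $t < t_2$ yields (by contrapositive) empty interior of $A_t$, since some $t' \in (t, t_2]$ has empty-interior $A_{t'}$; and $t > t_2$ yields non-empty interior of $A_t$, since some $t' \in (t_2, t)$ has non-empty-interior $A_{t'}$.

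To attack the monotonicity, I would invoke tameness (Definition~\ref{def:tame}) to obtain a fixed point $q_j$ of $f_{(j,t')}$ that is not a cusp of $A_{t'}$, so a cone $C$ with apex $q_j$ lies in $A_{t'}$. By Proposition~\ref{prop:semi}, $q_j$ is the unique fixed point of $f_{(j,t)}$ for every $t \in [0, t_0)$. Since the family is composed of similarities, $f_{(j,t)}(x) = t\, s_j R_j(x - q_j) + q_j$ with $R_j$ orthogonal and $s_j > 0$, so $f_{(j,t)}$ acts on vectors from $q_j$ simply by the scaling $t s_j$ composed with the rotation $R_j$. My goal would be to show that $A_t$ also contains a cone with apex $q_j$; since such a cone has non-empty interior, that would suffice.

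The construction of a cone in $A_t$ would combine three ingredients. First, pick an interior point $p \in C^o$ and a ball $B(p,\rho) \subseteq A_{t'}^o$ (using Lemma~\ref{closure}), and locate a finite word $\sigma$ with $f_{(\sigma, t')}(q_j) \in B(p, \rho/2)$ by density of periodic address points in $A_{t'}$. Second, exploit the identity $f_{(j,t)}(x) - q_j = (t/t')\bigl(f_{(j,t')}(x) - q_j\bigr)$, valid by semi-linearity plus similarity, to compare the $t$- and $t'$-dynamics near $q_j$ and show that pieces of $A_t$ near $q_j$ should dominate the corresponding pieces of $A_{t'}$. Third, iterate $f_{(j,t)}^k$: each iterate rotates by $R_j^k$ and shrinks by $(t s_j)^k$, so the closure of $\bigcup_k f_{(j,t)}^k$ applied to an appropriately chosen set spirals inward to $q_j$ and, assembled with the word $\sigma$ above, should contain a cone at $q_j$.

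The main obstacle is that Hausdorff continuity of $t \mapsto A_t$ (Theorem~\ref{thm:continuity}) is too coarse to preserve non-empty interior on its own: a sequence of thin attractors can approach a fat limit in the Hausdorff metric. Ruling out this pathology in the tame semi-linear similarity setting---showing that the cone structure at the non-cusp fixed point $q_j$ persists and in fact inflates as $t$ increases---is the core technical difficulty. I expect this to rely essentially on the rigidity of similarities (scaling plus orthogonal transformation rather than arbitrary affine distortion), on the tame hypothesis which pins the ``thickness'' of $A_t$ to the $t$-independent point $q_j$, and on the multiplicative identity between $f_{(j,t)}$ and $f_{(j,t')}$ above.
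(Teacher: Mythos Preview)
Your reduction is exactly the paper's: define $t_2$ as an infimum/supremum over the parameter, reduce to a monotonicity claim (non-empty interior at $t'$ forces non-empty interior at every $t'' \in (t',t_0)$), and invoke tameness to anchor a cone at a fixed point $q_j$. You also correctly isolate the multiplicative identity $f_{(j,t)}(x) - q_j = (t/t')\bigl(f_{(j,t')}(x) - q_j\bigr)$, which is indeed the heart of the matter.

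The gap is in the monotonicity step itself: you list ingredients (a ball inside the cone, a finite address word $\sigma$, a spiral under iteration of $f_{(j,t)}$) but never assemble them, and you explicitly flag the conclusion as the ``core technical difficulty'' left unresolved. The paper dispatches this step far more directly, with no addresses, no density, and no Hausdorff continuity. Taking the cone $Y \subset A_{t'}$ with apex at the fixed point, the paper chooses $Y$ small enough that $Y \subset f_{(j,t')}(Y)$; your scaling identity then gives $f_{(j,t')}(Y) \subset f_{(j,t'')}(Y)$ immediately, since the right-hand side is the dilation of the left about $q_j$ by the factor $t''/t' > 1$. Hence $Y \subset f_{(j,t'')}(Y) \subset F_{t''}(Y)$, and iterating yields $Y \subset F_{t''}^n(Y)$ for all $n$; since $A_{t''} = \lim_n F_{t''}^n(Y)$, conclude $Y \subset A_{t''}$. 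Your worry that Hausdorff continuity is too coarse to transport non-empty interior is well-founded, and that is precisely why the paper never invokes it: the argument runs entirely on nested set inclusions driven by the identity you already wrote down. You had the engine but reached for heavier machinery instead of pushing the inclusion chain straight through.
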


\begin{proof}  If $A_t$ has empty interior for all $t\in [0, t_0)$ (which is possible by Example~\ref{ex:allempty} and Theorem~\ref{thm:empty2}), then $t_2 = t_0$
satisfies the statement of the theorem.

So we may assume that there is a $t$ such that $A_t$ has non-empty interior.   Let $t_2$ be the greatest lower bound of those $t$ such that the interior of $A_t$ is non-empty.    It is now sufficient to show that, if $0<t'<t''<t_0$, and $A_{t'}$ has non-empty interior, then $A_{t''}$ also has non-empty interior.  Let $x$ be the fixed point of a
function $f_t(x) = t f(x)+q \in F_t$ such that $x$ is not a cusp of $A_t$.  Then there is a cone $Y$ centered at $x$ that is contained in $A_t$.  The cone $Y$ can
be chosen small enough so that, by the definition of an attractor, $Y \subset f_{t'}(Y)$.  Because the family is semi-linear and $f_t$ is a similarity with fixed point $x$ for
all $t<t_0$, we have $Y \subset f_{t'}(Y) \subset f_{t''}(Y) \subset F_{t''}(Y)$.  This implies that $ F_{t''}(Y)  \subset F_{t''}^2(Y)$.  Proceeding inductively,
$Y \subset F_{t''}(Y)\subset F_{t''}^2(Y) \subset F_{t''}^3(Y) \cdots$.  Because $A_{t''} = \lim_{n\rightarrow \infty} F_{t''}^n(Y)$, we have $Y\subset A_{t''}$.
Thus  $A_{t''}$ has non-empty interior since it contains the (non-empty) interior of cone $Y$.
\end{proof}

\begin{lemma} \label{lem:int1} Let $A$ be the attractor of an IFS $F$.   If there is a ball $B \subseteq \R^d$ and an integer $n_0$ such that $B \subseteq F^{n_0}(B)$, 
then $A$ has non-empty interior. 
\end{lemma}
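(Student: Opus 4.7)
The plan is to exploit monotonicity of the Hutchinson operator together with the defining limit property of the attractor. Since $F(X) = \bigcup_{f \in F} f(X)$, the operator $F$ is monotone with respect to set inclusion, and so is any iterate $F^{n_0}$. Therefore, starting from the hypothesis $B \subseteq F^{n_0}(B)$ and applying $F^{n_0}$ repeatedly, we obtain the increasing chain
\[ B \;\subseteq\; F^{n_0}(B) \;\subseteq\; F^{2n_0}(B) \;\subseteq\; F^{3n_0}(B) \;\subseteq\; \cdots . \]

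Next, I would invoke Equation~\eqref{eq:lim}: the subsequence $F^{kn_0}(B)$ converges to the attractor $A$ in the Hausdorff metric as $k \to \infty$. Fix any $x \in B$. Because $x \in F^{kn_0}(B)$ for every $k$, we have
\[ d(x,A) \;\leq\; \sup_{y \in F^{kn_0}(B)} d(y,A) \;\leq\; d_H\!\big(F^{kn_0}(B),A\big) \;\longrightarrow\; 0. \]
Thus $d(x,A) = 0$; since $A$ is compact and therefore closed, $x \in A$. This shows $B \subseteq A$, and as $B$ is a ball it has non-empty interior, so $A$ does as well.

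There is really no significant obstacle here: the argument is a three-line combination of monotonicity of $F$, the defining convergence of iterates to the attractor, and closedness of $A$. The only point requiring a bit of care is which direction of the Hausdorff distance is being used: we need that every point of $F^{kn_0}(B)$ lies close to $A$ (not vice versa), which is precisely the bound used above. No contractivity or metric comparison argument is needed, and the choice of the ball $B$ as starting set is critical only insofar as $B$ itself has non-empty interior and is compact.
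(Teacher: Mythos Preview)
Your proof is correct and follows essentially the same approach as the paper: use monotonicity of the Hutchinson operator to get $B \subseteq F^{mn_0}(B)$ for all $m$, then use the convergence $F^{kn_0}(B) \to A$ to conclude $B \subseteq A$. You are simply more explicit about the Hausdorff-metric step where the paper just asserts ``it must be the case that $B \subseteq A$.''
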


\begin{proof}  If $n_0$ is such that $B \subseteq F^{n_0}(B)$, then inductively $B \subseteq F^{m n_0}(B)$ for all positive integers $m$.  Since 
$A = \lim_{n\rightarrow \infty}  F^n(B)$, it must be the case that $B \subseteq A$.
\end{proof}

Let $F_t  = \{  t\, f_i(x) + q_i \, : 1\leq i \leq N \}$ be a semi-linear one-parameter similarity family, and let  $F = \{f_1, f_2, \dots, f_N\}$.  
Let $\rho$ be the maximum
scaling ratio of the similarities in $F$, and denote by $\widehat F = \{\widehat {f_1},\widehat {f_2}, \dots, ,\widehat {f}_{\widehat N}\}$ the subset of $F$ consisting of those similarities in 
$F$ with scaling ratio $\rho$.   Let
\[{\widehat F_t }=  \{  t\, \widehat {f_i}(x) + \widehat{q_i}\, :\, i=1,2,\dots, \widehat N\}\]  
denote the corresponding subset of $F_t$.
By Proposition~\ref{prop:semi},  each function $t_0 \widehat f_i(x) + \widehat{q_i} \in \widehat{F_{t_0}}$ is an isometry with fixed point $\widehat{q_i}$.

\begin{theorem} \label{thm:nonempty}
 Let $F_t  = \{  t\, f_i(x) + q_i \, : 1\leq i \leq N \}$ be a semi-linear, but not linear, one-parameter similarity family.  With notation as above, let  $H_t$ be any subset of
 $\widehat {F_t}$ having the property that the $\widehat{q_i}$ are all equal, i.e., $H_t$ has the form $H_t  = \{  t\, h_i(x) + q_0\, : 1\leq i \leq k \}$ for some $q_0$. 
 If  the orbit of a point on the unit sphere centered at $q_0$ under the action of the set $H_{t_0}$ of isometries  is dense on this unit sphere,  
then there is a $\tau < t_0$ such that the attractor $A_t$ of $F_t$ has non-empty interior for all $t\in (\tau, t_0)$.
\end{theorem}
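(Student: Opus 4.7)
The plan is to apply Lemma~\ref{lem:int1}: for $t$ sufficiently close to $t_0$, exhibit a ball $B$ and an integer $n_0$ such that $B \subseteq F_t^{n_0}(B)$, which forces $B \subseteq A_t$ and hence gives $A_t$ non-empty interior.

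First I would translate coordinates so that $q_0 = 0$. Using Proposition~\ref{prop:semi} and Lemma~\ref{prop:props}, each function in $H_t$ takes the form $h_{i,t}(x) = (t\rho)\,O_i(x)$, where $O_i := t_0 L_i \in O(d)$; at $t = t_0$, each $h_{i,t_0}$ is the rotation $O_i$. Let $G$ be the closure in $O(d)$ of the semigroup generated by $\{O_1, \dots, O_k\}$. Since $O(d)$ is compact, $G$ is a closed, hence compact, subgroup of $O(d)$. The hypothesis that some orbit on the unit sphere is dense forces $G$ to act transitively on $S^{d-1}$, and since the $O_i$ preserve Euclidean norm, $G$ acts transitively on every sphere centered at $0$. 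Because $F_t$ is not linear, there exists $q_j \neq 0$; by Proposition~\ref{prop:semi}, $q_j$ is the fixed point of $f_{j,t}$, so $q_j \in A_t$ for all $t < t_0$.

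For the construction I would pick a small ball $B = B(q_j, r)$ and consider compositions of the form $f_{j,t}^{\,m} \circ h_{\tau, t}$, where $\tau$ ranges over $H$-words with $|\tau| \leq N$ and $m$ is a fixed integer. By compactness of $G$, for any prescribed $\varepsilon > 0$ there is a finite $N$ so that $\{O_\tau : |\tau| \leq N\}$ is $\varepsilon$-dense in $G$. For $t$ sufficiently close to $t_0$, the factor $(t\rho)^{|\tau|}$ is close to $1$ for $|\tau| \leq N$, so $h_{\tau, t}$ acts as a near-rotation on $B$; then $f_{j,t}^{\,m}$ pulls the resulting ellipsoidal image back toward $q_j$, contracting by $(t s_j)^m$, where $s_j$ is the scaling ratio of $f_j$. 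By transitivity of $G$ on the sphere through $q_j$, varying $\tau$ causes the centers of the intermediate images $h_{\tau,t}(B)$ to densely populate a neighborhood of that sphere; after the contraction $f_{j,t}^{\,m}$ these land in a controlled neighborhood of $q_j$, and a covering argument should show the collection of images $f_{j,t}^{\,m}\bigl(h_{\tau,t}(B)\bigr)$ covers $B$. Setting $n_0 = N + m$, this yields $B \subseteq F_t^{n_0}(B)$ and Lemma~\ref{lem:int1} produces the desired $\tau < t_0$.

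The hard part will be the quantitative coupling of the parameters $r$, $\varepsilon$, $m$, $N$, and the closeness of $t$ to $t_0$. The subtlety is that mere $\varepsilon$-density of $\{O_\tau : |\tau| \leq N\}$ in $G$ only yields an $\varepsilon$-approximate covering of $B$, not an honest covering. Closing this gap requires a geometric argument that leverages (i) the bounded eccentricity of the images $f_{j,t}^{\,m}(h_{\tau,t}(B))$ (they are near-balls, since each map involved is a similarity or near-isometry), (ii) the transitivity of $G$ on the sphere through $q_j$, so that the centers of these images overlap sufficiently when $\varepsilon$ is small relative to the image radii, and (iii) the fact that $f_{j,t}^{\,m}$ genuinely contracts (provided we stay away from the $s_j = \rho$ case or iterate $m$ large enough), so that the image radii dominate the approximation error. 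With the parameters chosen in the right order, $B \subseteq F_t^{n_0}(B)$ holds for all $t$ in some interval $(\tau, t_0)$, completing the proof.
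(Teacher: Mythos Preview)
Your overall strategy—use Lemma~\ref{lem:int1} and build the covering out of $H_t$-words together with one extra map with a different fixed point—is the same as the paper's. But there is a genuine geometric gap in the covering step, and no choice of the parameters $r,\varepsilon,m,N$ will close it.

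You take $B=B(q_j,r)$ centered at the non-zero fixed point $q_j$ and try to cover it by the images $f_{j,t}^{\,m}\bigl(h_{\tau,t}(B)\bigr)$ with $m$ fixed. Write $f_{j,t}(x)=q_j+(ts_j)R_j(x-q_j)$ with $R_j$ orthogonal. Since $h_{\tau,t}(q_j)\approx O_\tau q_j$ ranges over the sphere $|q_j|\,S^{d-1}$, the image centers
\[
f_{j,t}^{\,m}\bigl(h_{\tau,t}(q_j)\bigr)\ \approx\ q_j+(t s_j)^m R_j^{\,m}\bigl(O_\tau q_j-q_j\bigr)
\]
lie on a single sphere of radius $(t s_j)^m|q_j|$ passing through $q_j$, while each image has radius $\approx (t s_j)^m r$. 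Balls of radius $\rho$ centered on a sphere cover only a tubular neighborhood of that sphere of thickness $2\rho$; since $(ts_j)^m<1$, one has $\rho<r$, and in the direction normal to the sphere at $q_j$ the ball $B(q_j,r)$ is not covered. So the inclusion $B\subseteq F_t^{\,n_0}(B)$ fails for this family of compositions. (There is also a minor length issue: your words $f_{j,t}^{\,m}\circ h_{\tau,t}$ have length $m+|\tau|$, not the fixed $n_0=m+N$ needed for Lemma~\ref{lem:int1}.)

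The paper avoids this by centering the ball at $q_0=0$, the common fixed point of $H_t$, and splitting the covering in two. A single pure $H_t$-word of length $M{+}1$ maps $B_r$ to a \emph{concentric} ball of radius $rt^{M+1}>r(1-\epsilon)$, which already covers the inner ball $B_{r(1-\epsilon)}$. Only the thin shell of width $r\epsilon$ remains, and it is covered by words containing exactly one application of a map $g$ with fixed point $\neq 0$; the $H$-part of the word supplies the angular coverage (density of the orbit), while one $g$ provides a radial displacement into the shell. An elementary cosine-law estimate then shows each cone-sector of the shell is contained in one such image ball. The point you are missing is that centering at $q_0$ lets the near-isometries do almost all the work, reducing the problem to covering a shell whose thickness can be made arbitrarily small relative to the image radii.
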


\begin{proof} Referring to Remark~\ref{rem:noloss}, we assume without loss of generality that $t_0 = 1$, which implies that the maximum scaling ratio of
among the linear parts of the functions in $F_t$ is $1$. We also assume, without loss of
generality,  that $q_0$ is the origin.  Since $F_t$ is not linear, not all functions in $F_t$ have the same fixed point.  Let $g_t \in F_t$ be such that the fixed point of $g_t$ is not 
the origin $0$.  Denote the similarity ratio of $g$ by $0 < s <1$.  Choose $\epsilon > 0$ and $\pi/ 2 > \theta >0$ such that 
\begin{equation} \label{eq:cone} 2(1-\epsilon) \cos \theta > 1 + (1-\epsilon)^2 (1-s^2).\end{equation}
  Note that this is possible if $\epsilon$ and $\theta$ are chosen sufficiently small.  Let $\mathcal C$ be
a set of infinite circular cones of central angle $\theta$ centered at the origin, such that the union of the cones in $\mathcal C$ is $\R^d$.  

Let $p = g_{1}(0)$; let $r = |p|$;
and let $B_r$ be the ball of radius $r$ centered at $0$.  Since the orbit under $H_{1}$ of a point on the unit sphere centered at $0$ is dense on this unit sphere, there is an integer
 $M$ such that every cone in $\mathcal C$ contains a point of $\bigcup_{k=0}^M F_{1}^{k} (p)$.  Let $\tau = (1-\epsilon)^{M+1}$. Let $1 >t > \tau$, i.e. $t <1$ such that $t^{M+1} > 1-\epsilon$.  By Lemma~\ref{lem:int1}, it is sufficient to show that  $B_r \subseteq F_t^{M+1}(B_r)$.

For any $h_t \in H_t$, note that $h_t^{M+1}(B_r)$ is a ball of radius $r t^{M+1} > r(1-\epsilon)$.  Therefore 
$B_{r(1-\epsilon)} \subset h_t^{M+1}(B_r) \subseteq F_t^{M+1}(B_r)$.  It now only remains to show that the shell $S$, the region between the
concentric spheres of radius $r$ and radius  $r(1-\epsilon)$, is covered by  $F_t^{(M+1)}(B_r)$.

The set $F_t^{k} g F_t^{M-k}(B_r) = F_t^{k} g (B_{r(1-\epsilon)}) \subset F_t^{M+1}(B_r)$ is the union of balls of radius $rst^M > rst^{M+1} > rs(1-\epsilon)$.  
Denote this set of balls
by $\mathcal B$.  The centers of these balls  lie in the shell $S$ and, for each point $x \in F_{1}^{k} (p)$, there is a ball $B_x \in \mathcal B$ whose center lies on the ray from $0$ through $x$.  Therefore, for any cone $C \in \mathcal C$, there is a ball $B \in \mathcal B$ whose center lies in $C$.   It now only remains to show that $S\cap C \subset B$.  
This is insured if, in Figure~\ref{fig:cone}, the radius of $B$ is greater than the length $d$.   Using the cosine law, we require
\[r^2 s^2 (1-\epsilon)^2 > d^2 = r^2 + r^2(1-\epsilon)^2 - 2 r^2 (1-\epsilon) \cos \theta,\]
which follows from Equation~\eqref{eq:cone}.
\end{proof}

\begin{figure}[htb]  
\includegraphics[width=4.5cm, keepaspectratio]{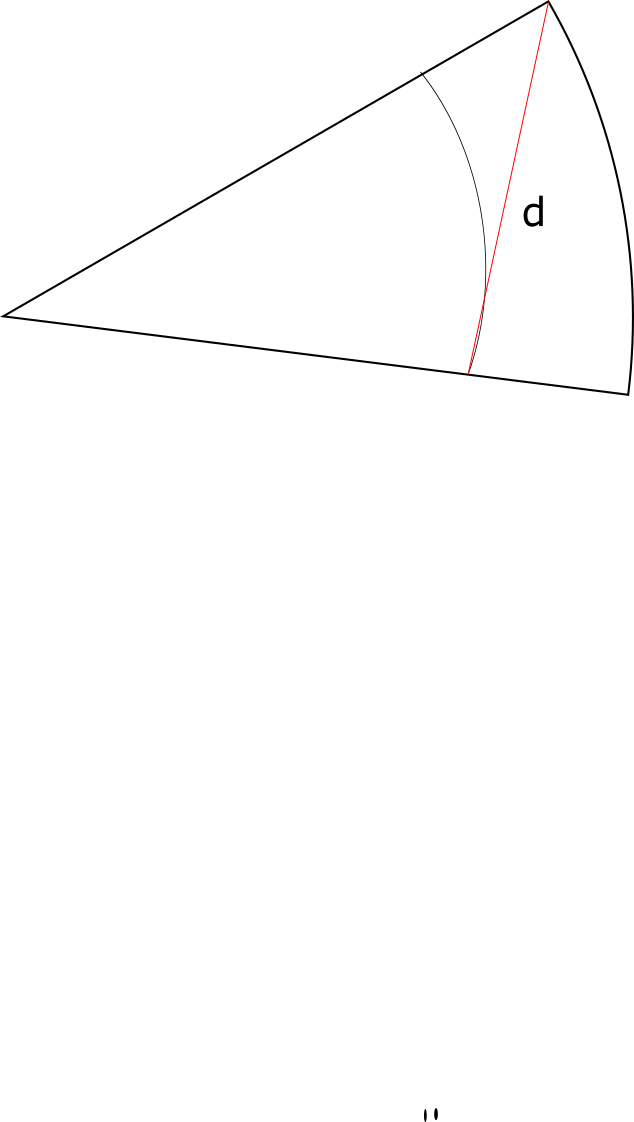} 
\vskip -5cm
\caption{The cone in the proof of Theorem~\ref{thm:nonempty}. }
\label{fig:cone}
\end{figure}

\begin{cor} \label{cor:nonempty} Let $F_t$ be a semi-linear, but not linear, similarity family in $\R^2$.  If there is a function in $F_t$ whose linear part has 
maximum scaling ratio among the functions in $F_t$  and whose rotation angle  is an irrational multiple of $\pi$, then there is a $\tau < t_0$ such
that the attractor $A_t$ of $F_t$ has non-empty interior for all $t\in (\tau, t_0)$.
\end{cor}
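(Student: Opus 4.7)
The strategy is to invoke Theorem~\ref{thm:nonempty} by choosing $H_t$ to be a singleton. By hypothesis, some function $f_{(j,t)}(x) = t\, f_j(x) + q_j$ in $F_t$ has the maximal scaling ratio $\rho$ among the linear parts, and that linear part $f_j$ rotates by an angle $\alpha$ with $\alpha/\pi$ irrational. Since $F_t$ is semi-linear, Proposition~\ref{prop:semi} ensures that $q_j$ is the unique fixed point of $f_{(j,t)}$ for every $t\in [0,t_0]$. Take $H_t = \{f_{(j,t)}\}$; since this is a singleton, the list of fixed points $\{\widehat{q_i}\}$ is just $\{q_j\}$, so the common-fixed-point requirement in Theorem~\ref{thm:nonempty} is trivially satisfied with $q_0 := q_j$.

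At the threshold parameter $t_0 = 1/\rho$, the function $f_{(j,t_0)}$ is a direct similarity of $\R^2$ with scaling ratio $t_0\rho = 1$ and rotation angle $\alpha$, so it is precisely the rotation of $\R^2$ about $q_j$ through the angle $\alpha$. The orbit of any point on the unit circle centered at $q_j$ under iteration of this rotation traces out the sequence of angles $\{n\alpha \bmod 2\pi : n \geq 0\}$, which is dense in $[0,2\pi)$ by the classical Kronecker/Weyl density theorem, because $\alpha/\pi$ is irrational (equivalently $\alpha/(2\pi)$ is irrational). Hence the orbit is dense on the unit circle centered at $q_0$, exactly as Theorem~\ref{thm:nonempty} requires.

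Applying Theorem~\ref{thm:nonempty} then yields a real number $\tau < t_0$ such that $A_t$ has non-empty interior for every $t\in (\tau, t_0)$, which is the conclusion of the corollary. I do not foresee any genuine obstacle: essentially the only substantive check beyond bookkeeping is the density statement for the single irrational rotation, which is standard. The role of the ``maximum scaling ratio'' hypothesis is only to ensure that the chosen function lies in $\widehat{F_t}$, so that one is entitled to use it inside $H_t \subseteq \widehat{F_t}$.
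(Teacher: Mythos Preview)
Your proof is correct and is exactly the intended derivation: the paper states the corollary without proof, and your argument supplies the obvious route, namely taking $H_t$ to be the singleton $\{f_{(j,t)}\}$ so that $H_{t_0}$ is the single rotation about $q_j$ by the irrational angle $\alpha$, whose orbit on the circle is dense by Weyl's theorem, and then invoking Theorem~\ref{thm:nonempty}. There is nothing to add.
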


\section{Transition Attractors for Bounded Families} \label{sec:transition}

This section concerns the phenomena 
that can occur at the transition point $t_0$ between the 
existence and non-existence of an attractor. Two notions of a transition attractor for a one-parameter family $F_t$ are defined. 

\begin{definition} \label{def:bounded}
 Call a one-parameter similarity family $F_t$ {\bf bounded} if it is semi-linear and if there is a unique function, call it  
$\widehat {f}_t(x) = t \widehat {f}(x) + q_* \in F_t$,  
such that  $\widehat {f}$ has maximum scaling ratio. The motivation for the terminology ``bounded" is Theorem~\ref{thm:bounded} below.  By Proposition~\ref{prop:semi},  $q_*$ is a fixed point of $\widehat {f}_t(x)$ for all   
$t\in [0,t_0]$.   Call  $f^* = {\widehat f}_{t_0}$  the 
{\bf special function} associated with $F_t$,  and call $q_*$ the {\bf special fixed point}. Note that the special function $g$ is an isometry.  
\end{definition}



\begin{theorem} \label{thm:bounded}  If $F_t$ is a bounded one-parameter family, then there is a ball $B$ such that 
 then $F_t(B) \subset B$ for all $t\in [0, t_0]$.  
\end{theorem}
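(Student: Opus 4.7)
The plan is to choose an explicit closed ball centered at the special fixed point $q_*$, exploiting the fact that by Definition~\ref{def:bounded} the special function fixes $q_*$ and, at $t=t_0$, is an isometry, while every other function in $F_{t_0}$ has strictly smaller scaling ratio. Bounding the contribution of the non-special functions is then a matter of taking the radius large enough.

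First I would use semi-linearity. By Proposition~\ref{prop:semi} together with Lemma~\ref{prop:props}(3), each member of $F_t$ can be written as $f_{(i,t)}(x) = t L_i(x - q_i) + q_i$, where $L_i$ is the linear part of $f_i$ and has scaling ratio $\rho_i$. The uniqueness clause in Definition~\ref{def:bounded} says there is exactly one index $i^*$ with $\rho_{i^*} = \rho := 1/t_0$, and every other index satisfies $t_0 \rho_i < 1$.

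Next I would test the closed ball $B$ of radius $R$ about $q_*$. For the special function, $f^*_t(B)$ is the closed ball about $q_*$ of radius $t\rho R \le t_0 \rho R = R$, so $f^*_t(B) \subseteq B$ for every $t \in [0,t_0]$. For a non-special index $i$, the image $f_{(i,t)}(B)$ is a ball of radius $t\rho_i R$ centered at $tL_i(q_* - q_i)+q_i$, whose distance from $q_*$ is
\[ \|(tL_i - I)(q_* - q_i)\| \le (1 + t\rho_i)\,\|q_* - q_i\|. \]
Thus $f_{(i,t)}(B) \subseteq B$ holds provided
\[ R \;\ge\; \frac{1 + t\rho_i}{1 - t\rho_i}\,\|q_* - q_i\|. \]
The right-hand side is increasing in $t$ on $[0, 1/\rho_i)$ and is finite at $t = t_0$ because $t_0\rho_i < 1$. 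Taking $R$ to be the maximum of these finitely many bounds evaluated at $t = t_0$ then gives a single radius that works simultaneously for every $i \ne i^*$ and every $t \in [0, t_0]$.

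There is no real obstacle; the essential point is the choice of center. Centering at $q_*$ is forced because the special function is an isometry at $t_0$ and would push any ball not centered at $q_*$ outward, while the strict inequality $t_0 \rho_i < 1$ for $i \ne i^*$ is precisely what lets a large enough $R$ absorb the displacement $\|q_* - q_i\|$.
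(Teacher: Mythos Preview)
Your proof is correct and follows essentially the same approach as the paper: center the ball at the special fixed point $q_*$, observe that the special function (an isometry at $t_0$) preserves any such ball, and then use $t_0\rho_i<1$ for every non-special $i$ to choose $R$ large enough to absorb the displacement $\|q_*-q_i\|$. Your computation is in fact a bit cleaner than the paper's, since you directly exploit that a similarity sends the ball $B$ to a ball of known center and radius, whereas the paper reaches the analogous inequality through a slightly longer chain of triangle-inequality estimates; but the idea and the structure are the same.
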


\begin{proof} 
Without loss of generality assume (Remark~\ref{rem:noloss}) that $t_0 = 1$.  Using Lemma~\ref{prop:props} and Proposition~\ref{prop:semi},
let  $F_t = \{ t \, L_i(x - q_i) + q_i : i=1,2,\dots, N\}$ be the bounded  one-parameter family.  The scaling ratio of $L_i$ is denoted $r_i$.   
Without loss of generality assume that the special fixed point is $0$, the origin.  
 Let $B$ be a ball centered at the origin of radius 
\[R > \max \Big \{  \frac{\max (|(I-L_i)( q_i)|,|q_i|)}{1- r_i} \,:\,  i=1,2,\dots, N, \, r_i < 1  \Big \}.\] 

For the  unique function  $\widehat {f}_t(x) = t \widehat {f}(x) +q_* \in F_t$  whose 
linear part has scaling ratio $1$ we have $|\widehat {f}_t(x) | = t |L_f(x)| \leq R$ for all $x \in B$ with equality if and only if $t=1$.

For any $f_t(x) = t f(x) + q \in F_t$  whose linear part $L$ has scaling ratio $r<1$, let $h(x) = f(x) + q$.  Then $f_t(x) = t\, h(x) + (1-t) q$.
For $t\in [0, t_0]$ and for all $x\in B$, we have
\[ \begin{aligned} |f_t(x)| &=  |t \, h(x) + (1-t) q|  \leq  t |h(x) |+ (1-t)\,|q| \leq  t |f(x)| + t |q|+ (1-t)\,|q|  \\
&\leq t |L(x)| +\big ( \, t \,|(I-L)(q)|+ (1-t)|q|\big ) \leq t r R + \max \{|(I-L)( q)|, |q| \} \\
&\leq t r R + (1-r) R \leq  t r R + (1- t\, r) R = R.\end{aligned} \]
\end{proof}

\begin{remark} Theorem~\ref{thm:bounded} may fail to hold  if there is more than one function whose linear part has maximum scaling ratio. This is the
case for the the one-parameter family of Example~\ref{ex:I}. Theorem~\ref{thm:bounded} may also fail to hold if the family is not semi-linear; any affine family for which $t_0 = 1$ that contains a function of the form $tx+a$ is an example.
\end{remark}

\begin{cor} For a bounded one-parameter family $F_t$, there is ball $B$ such that the attractor $A_t$ is contained in $B$ for all $t\in [0,t_0)$.  
\end{cor}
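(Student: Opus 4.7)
The plan is to invoke Theorem~\ref{thm:bounded} directly and then push the forward-invariance of the resulting ball down to the attractors by iteration. The key observation is that Theorem~\ref{thm:bounded} produces a ball $B$ that is forward-invariant under $F_t$ \emph{simultaneously} for every $t \in [0, t_0]$; the $t$-independence of $B$ is precisely what is needed to obtain a single containing ball for the whole one-parameter family of attractors.

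First I would apply Theorem~\ref{thm:bounded} to obtain the ball $B$ with $F_t(B) \subset B$ for all $t \in [0, t_0]$. Then, for any fixed $t \in [0, t_0)$, Corollary~\ref{cor:existence} guarantees that the attractor $A_t$ exists. Iterating the inclusion $F_t(B) \subset B$ gives $F_t^n(B) \subset B$ for every $n \geq 1$. By Equation~\eqref{eq:lim}, taking $K = B$, the attractor $A_t$ is the Hausdorff limit of the sequence $F_t^n(B)$; since each iterate lies in the closed set $B$, the Hausdorff limit also lies in $B$, which yields $A_t \subset B$.

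There is really no obstacle here, since Theorem~\ref{thm:bounded} has already done all the hard work, namely the uniform construction of the invariant ball. The corollary is essentially a one-line extraction once that theorem is in hand, and the only point worth emphasizing is the uniformity in $t$ of the ball $B$.
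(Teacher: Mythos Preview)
Your proof is correct and follows essentially the same approach as the paper: both invoke Theorem~\ref{thm:bounded} to obtain a single ball $B$ with $F_t(B)\subset B$ for all $t$, and then conclude $A_t\subset B$ from the iterates. The only cosmetic difference is that the paper phrases the last step as $A_t=\bigcap_{n\ge 1}F_t^n(B)$ while you use the Hausdorff-limit formulation~\eqref{eq:lim}; for a nested sequence of compact sets these coincide.
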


\begin{proof}  This follows immediately from the fact that $A_t = \bigcap_{n\geq 1} F^n_t(B)$, where $B$ is the ball in the statement of Theorem~\ref{thm:bounded}.
\end{proof}

For a bounded one parameter family  $F_t$, call a compact set $A^*$ {\bf an upper transition attractor} if $A^*$ is the limit of a sequence $A_{t_k}, \, k=1,2,\dots,$ of attractors with 
$t_k \rightarrow t_0$.  By the Balzano-Weierstrass theorem, there is at least one upper transition attractor.  For a one-parameter affine family $F_t$, Theorem~\ref{thm:continuity} states that the map $t\mapsto A_t$ is continuous on $(0,t_0)$, but Example~\ref{ex:I} shows
that it may not be uniformly continuous.   For the family $F_t$ in Example~\ref{ex:I}, however, the attractors $A_t$ grow in size without bound as $t\rightarrow t_0$.   Call a one-parameter affine family $F_t$  {\bf uniform} if the function $t\mapsto A_t$ is uniformly continuous on the interval $(0,t_0)$.  A basic result from analysis implies that a one-parameter affine family $F_t$ is uniform if and only if 
$\lim_{t\rightarrow t_0} A_t$ exists.  The following is the main open problem in this paper.  It is stated in three equivalent ways.  Corollary~\ref{cor:uniform} below verifies the conjecture in the $1$-dimensional case, and graphical evidence seems to support it in dimension $2$.  

\begin{conjecture}  \label{conj:lim} For a bounded one-parameter family $F_t$, the following three equivalent statements hold:
\begin{enumerate}
\item there is a unique upper transition attractor; 
\item $\lim_{t\rightarrow t_0} A_t$ exists;
\item $F_t$ is uniform.  
\end{enumerate}
\end{conjecture}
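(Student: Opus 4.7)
The plan is first to settle the equivalence of the three formulations, which is essentially a compactness-and-completeness exercise, and then to sketch how I would attempt the substantive claim. Since $F_t$ is bounded, Theorem~\ref{thm:bounded} provides a single compact ball $B$ with $A_t \subseteq B$ for all $t \in [0, t_0]$; thus all attractors lie in the complete and compact metric space $(\mathbb{H}(B), d_H)$, and $t \mapsto A_t$ is a continuous $\mathbb{H}(B)$-valued function on $(0, t_0)$ by Theorem~\ref{thm:continuity}.

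For (2)$\Leftrightarrow$(3) I would invoke the standard fact that a continuous map from $(0, t_0)$ into a complete metric space has a left-hand limit at $t_0$ if and only if it is uniformly continuous on some interval $(t_0 - \delta, t_0)$, which combined with the uniform continuity already known on every compact subinterval of $(0, t_0)$ gives uniform continuity on the whole interval. For (2)$\Rightarrow$(1), if $L = \lim_{t \to t_0} A_t$ exists then every subsequential limit equals $L$, so $L$ is the unique upper transition attractor. For (1)$\Rightarrow$(2), if no left-hand limit existed I would use the compactness of $\mathbb{H}(B)$ in the Hausdorff metric, together with a standard oscillation argument, to extract two sequences $s_k, t_k \to t_0$ whose attractors converge to distinct limits, each of which is an upper transition attractor, contradicting the uniqueness in (1).

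The real content is to establish any one of the three equivalent statements. I would attack (1) directly. Given any upper transition attractor $A^{*} = \lim A_{t_k}$, joint continuity of the Hutchinson operator in the set variable and the parameter forces $F_{t_0}(A^{*}) = A^{*}$; moreover the results of Section~\ref{sec:transition} give $A_{*} \subseteq A^{*}$ and $\mathrm{conv}(A_{*}) = \mathrm{conv}(A^{*})$ for every such $A^{*}$. My plan would be to exploit the structure of a bounded family: writing $F_{t_0} = \{\widehat{f}_{t_0}\} \cup G$, where $\widehat{f}_{t_0}$ is the special isometry and $G$ consists of strict contractions, one can decompose the fixed-set equation $A^{*} = \widehat{f}_{t_0}(A^{*}) \cup G(A^{*})$ and, using that $G$ by itself has a bona fide attractor $A_G$, try to show that $A^{*}$ must equal the closure of the union of $\widehat{f}_{t_0}$-orbits of points that are forced into $A^{*}$ by the approximating dynamics $F_{t_k}$. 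If this canonical description is independent of the approximating sequence $\{t_k\}$, uniqueness of $A^{*}$ follows.

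The main obstacle will be controlling the interplay between the isometry $\widehat{f}_{t_0}$ and the contractive part $G$ near the special fixed point $q_{*}$. In one dimension the isometry is either the identity or a reflection, the orbit structure is trivially finite, and this is presumably what Corollary~\ref{cor:uniform} exploits. In two or more dimensions $\widehat{f}_{t_0}$ can have orbits dense on spheres (exactly the setting used for non-empty interior in Theorem~\ref{thm:nonempty}), and it is conceivable that different approximating sequences $\{t_k\}$ produce limit sets $A^{*}$ that differ by a \emph{rotational phase} around $q_{*}$. Ruling this out requires a quantitative modulus of continuity for $t \mapsto A_t$ near $t_0$ strong enough to suppress such phase drift, and I expect this quantitative estimate — rather than any of the soft compactness or equivalence arguments — to be where the genuine difficulty of the conjecture lies.
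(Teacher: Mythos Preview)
This statement is labelled a \emph{Conjecture} in the paper, and the paper does not prove it. The paper asserts only that the three formulations are equivalent (``a basic result from analysis implies that a one-parameter affine family $F_t$ is uniform if and only if $\lim_{t\rightarrow t_0} A_t$ exists''), verifies the $1$-dimensional case in Corollary~\ref{cor:uniform}, and otherwise leaves the conjecture open, supported only by graphical evidence in dimension $2$.

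Your treatment of the equivalence $(1)\Leftrightarrow(2)\Leftrightarrow(3)$ is correct and fills in the details the paper omits: compactness of $\mathbb H(B)$ in the Hausdorff metric, continuity of $t\mapsto A_t$ on $(0,t_0)$, and the standard oscillation/subsequence argument all work as you describe. This portion matches the paper's intent exactly.

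The remainder of your proposal is not a proof but a strategy, and you are candid about this. Your decomposition $F_{t_0}=\{\widehat f_{t_0}\}\cup G$ with $G$ strictly contractive, and your identification of the possible ``rotational phase drift'' when $\widehat f_{t_0}$ has dense orbits on spheres, accurately locate the obstruction; indeed the paper's $1$-dimensional proof (Corollary~\ref{cor:uniform}) succeeds precisely because in $\mathbb R^1$ the attractors eventually become nested intervals, sidestepping any such phase issue. But your sketch does not supply the missing quantitative estimate, and the paper offers no further tools toward it. So your proposal neither proves nor disproves the conjecture, which is consistent with its status in the paper as the main open problem.
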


\begin{definition}  For a uniform one-parameter affine family $F_t$, the compact set
\[A^* := \lim_{t\rightarrow t_0} A_t\]
will be called {\bf the upper transition attractor} of $F_t$.  
\end{definition}  

For a set $X$, let $conv \,X$ denote the convex hull of $X$.

\begin{lemma} \label{lem:nested} Let $F_t, \, 0 \leq t < t_0,$ be a bounded one-parameter family, and let $A_t$ denote the attractor of 
$F_t$.   If $K_t = conv \, A_t$, then $K_s \subseteq K_t$ for all $0 \leq s \leq t < t_0$.  
\end{lemma}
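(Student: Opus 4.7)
The plan is to exploit the semi-linear structure that is built into the definition of a bounded family. By Definition~\ref{def:bounded}, $F_t$ is semi-linear, so Proposition~\ref{prop:semi} says that $q_i$ is the unique fixed point of $f_{(i,t)}(x) = t f_i(x) + q_i$ for every $i$ and every $t \in [0,t_0)$. Combining this with Lemma~\ref{prop:props}, each function in $F_t$ has the very convenient form
\[
f_{(i,t)}(x) \;=\; q_i + t\, L_i(x - q_i),
\]
where $L_i$ is the linear part of $f_i$. Moreover, by Proposition~\ref{prop:fixed}, each $q_i$ lies in $A_t$ and hence in $K_t$, so $K_t$ is a compact convex set that contains every $q_i$.

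Next, I would fix $0 \le s \le t < t_0$ and show that $F_s(K_t) \subseteq K_t$. The key elementary fact is this: if $C$ is a convex set containing the origin and $0 \le s \le t$, then $sC \subseteq tC$, because any point $sc \in sC$ equals $t\bigl((s/t) c + (1 - s/t) \cdot 0\bigr)$, which lies in $tC$. Apply this to $C_i := L_i(K_t - q_i)$, which is convex and contains $0$ since $q_i \in K_t$. Then
\[
f_{(i,s)}(K_t) \;=\; q_i + s C_i \;\subseteq\; q_i + t C_i \;=\; f_{(i,t)}(K_t).
\]
On the other hand, since $f_{(i,t)}$ is affine and $f_{(i,t)}(A_t) \subseteq A_t$, we have $f_{(i,t)}(K_t) = \operatorname{conv} f_{(i,t)}(A_t) \subseteq \operatorname{conv} A_t = K_t$. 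Taking the union over $i$ yields $F_s(K_t) \subseteq K_t$.

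Finally, I would iterate: $F_s^n(K_t) \subseteq K_t$ for every $n \ge 1$. Because $s < t_0$, the attractor $A_s$ exists and equals $\lim_{n\to\infty} F_s^n(K_t)$ in the Hausdorff metric, so $A_s \subseteq K_t$. Since $K_t$ is convex, passing to convex hulls gives $K_s = \operatorname{conv}\, A_s \subseteq K_t$, as desired.

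There is no substantial obstacle here; the only point worth flagging is that the argument uses the full strength of semi-linearity (through the canonical form $q_i + t L_i(x-q_i)$ and the fact that each $q_i$ is already a member of $K_t$). The boundedness hypothesis itself is not used in the proof — semi-linearity alone suffices — but the statement is naturally formulated in the bounded setting, which is where the conclusion will be applied in the subsequent analysis of transition attractors.
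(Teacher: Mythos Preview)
Your proof is correct and follows essentially the same route as the paper's: both use the semi-linear form $f_{(i,t)}(x) = q_i + tL_i(x - q_i)$ together with the fact that each $q_i$ lies in $K_t$ to deduce $F_s(K_t)\subseteq K_t$ whenever $s\le t$. The paper then concludes via a minimality characterization of $K_t$ as the smallest convex $F_t$-invariant set, while you iterate $F_s$ on $K_t$ and pass to the limit $A_s$; these are cosmetic variations on the same argument, and your observation that only semi-linearity (not the full boundedness hypothesis) is used is accurate.
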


\begin{proof}  Without loss of generality (see Remark~\ref{rem:noloss}), assume that $t_0=1$.
If $B$ is the ball in  Theorem~\ref{thm:bounded}, then $F_t(B) \subset B$ for all $0 \leq t <1$.    
Let $f_t(x) := t f(x) + q$ be any function in $F_t$.  
Since $f_s(x)$ and $f_t(x)$ are both contractive similarity transformations centered at 
$q\in B$ with the same linear part up to a constant, it must be the case that $f_s(B) \subseteq f_t(B)$ for all 
$f\in F$. 
  For $0 \leq t < 1$, let $K_t$ be the smallest convex set $K$ such that $F_t(K) \subseteq K$, i.e., 
the intersection of all convex sets with this property.  Because all the fixed points of functions in $F_t$ 
are contained in such a convex set, 
$F_t(K) \subseteq K$ implies that $F_s(K_t) \subseteq K_t$. Therefore
\[K_s = \bigcap \{ K \, : \,  F_s(K) \subseteq K, K \; \text{convex}\} \subseteq  K_t.\]
The facts that the functions are affine and that $F_t(A_t)=A_t$ implies that  $F_t(conv \, A_t) \subseteq  conv \, A_t$.
Since $K_t$ is the smallest convex set $K$ such that $F_t(K) \subseteq K$, we have $K_t \subseteq conv \, A_t$.  On the
other hand, since $A_t = \bigcap_{n\geq 0} F^n(K_t)$, we have $A_t \subset K_t$, which implies that
$conv \,A_t \subseteq K_t$ because $K_t$ is convex.   
Therefore $K_t = conv \, A_t$.
\end{proof}

\begin{cor} \label{cor:uniform}  A $1$-dimensional bounded one-parameter family $F_t$ is uniform; equivalently $A^* := \lim_{t\rightarrow t_0} A_t$ exists.  
\end{cor}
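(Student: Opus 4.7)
The plan is to show that, in one dimension, the attractor $A_t$ actually coincides with its convex hull $K_t = \operatorname{conv}(A_t)$ once $t$ is close enough to $t_0$; Lemma~\ref{lem:nested} will then pin down the limit.

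By Remark~\ref{rem:noloss} I normalize $t_0 = 1$. Lemma~\ref{lem:nested} gives $K_s \subseteq K_t$ for $s \le t$; in dimension one $K_t = [m_t, M_t]$, so $m_t$ is non-increasing and $M_t$ non-decreasing in $t$. By Theorem~\ref{thm:bounded} all the $K_t$ lie in a fixed ball, so monotone convergence produces $m^* := \lim m_t$ and $M^* := \lim M_t$, whence $K_t \to K^* := [m^*, M^*]$ in the Hausdorff metric.

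The heart of the proof is the claim that $F_t(K_t) = K_t$ for $t$ sufficiently close to $1$. Since $F_t$ is a bounded similarity family, its special function has the form $\widehat f_t(x) = t\varepsilon(x - q_*) + q_*$ with $\varepsilon \in \{-1,+1\}$, so at $t = 1$ it is either the identity or the reflection through $q_*$. In either case $\widehat f_t(K_t)$ is an interval contained in $K_t$ that misses $K_t$ only in two ``end intervals'' adjacent to $m_t$ and $M_t$, each of width $O(1-t)$ (vanishing as $t \to 1$). The endpoints $m_t, M_t \in A_t = F_t(A_t)$ must lie in some $f_{(j_L, t)}(K_t)$ and $f_{(j_R, t)}(K_t)$; a direct calculation with the semi-linear form $f_{(i,t)}(x) = t a_i(x - q_i) + q_i$ (Proposition~\ref{prop:semi}) shows that $\widehat f_t$ cannot produce an endpoint $m_t$ or $M_t$ unless that endpoint coincides with $q_*$, in which case the corresponding end interval is already empty. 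Thus without loss of generality $j_L, j_R \ne *$, and each of $f_{(j_L,t)}(K_t), f_{(j_R,t)}(K_t)$ has length $t|a_{j_L}|(M_t - m_t)$ or $t|a_{j_R}|(M_t - m_t)$, bounded below by a strictly positive constant as $t \to 1$ (using $|a_{j_L}|, |a_{j_R}| > 0$ and $M^* - m^* > 0$ in the nontrivial case). Since the end-interval widths tend to $0$ while these three images have lengths bounded below, for $t$ near $1$ their union covers $K_t$, giving $F_t(K_t) = K_t$ and hence $A_t = \bigcap_n F_t^n(K_t) = K_t$.

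Combining the two steps, $A_t = K_t \to K^*$ as $t \to 1^-$, so $\lim_{t \to t_0^-} A_t$ exists and $F_t$ is uniform. The main obstacle I anticipate is the geometric bookkeeping in the crux step: simultaneously treating the identity and reflection cases, both signs of $a_{j_L}$ and $a_{j_R}$, and verifying the covering inequalities (e.g.\ $t|a_{j_L}|(M_t - m_t) \ge (1-t)(q_* - m_t)$ in the identity case, with its reflection analogue using the asymptotic symmetry of $K^*$ about $q_*$). Each such inequality compares a quantity with strictly positive limit at $t = 1$ against one vanishing there, so all of them reduce to routine continuity checks.
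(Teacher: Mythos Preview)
Your overall architecture matches the paper's: show that $A_t$ coincides with its convex hull $K_t$ for $t$ near $t_0$, then invoke Lemma~\ref{lem:nested} and Theorem~\ref{thm:bounded} to get a nested, bounded family of intervals with a well-defined Hausdorff limit. The difference is in how you obtain $A_t = K_t$. The paper does it in one line: a bounded family is by definition a similarity family, so Theorem~\ref{thm:sim} gives connectedness of $A_t$ for $t$ close to $t_0$, and a connected compact subset of $\mathbb{R}$ is an interval, hence $A_t = \operatorname{conv} A_t = K_t$. You instead argue directly that $F_t(K_t) = K_t$ by covering $K_t$ with $\widehat f_t(K_t)$ together with the images $f_{(j_L,t)}(K_t)$, $f_{(j_R,t)}(K_t)$ that hit the endpoints. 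This is a legitimate, self-contained route that avoids the volume argument underlying Theorem~\ref{thm:sim}, at the cost of more bookkeeping.

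Two small points to tighten. First, your claim that ``$\widehat f_t$ cannot produce an endpoint $m_t$ or $M_t$ unless that endpoint coincides with $q_*$'' is only literally true in the identity case $\varepsilon = +1$; in the reflection case $\widehat f_t$ can map $M_t$ to $m_t$ without $m_t = q_*$. What remains true in both cases is the conclusion you actually need: if $\widehat f_t$ produces the endpoint, then the corresponding end-gap is already empty. Second, in the reflection case the end-gap widths are not manifestly $O(1-t)$; you need the symmetry $m^* + M^* = 2q_*$, which follows because $\widehat f_1$ is an isometry and $\widehat f_t(K_t) \subseteq K_t$ forces $\widehat f_1(K^*) = K^*$. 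You allude to this (``asymptotic symmetry'') but should state it explicitly. With these two clarifications your argument goes through; the paper's proof is simply shorter because Theorem~\ref{thm:sim} has already done the work.
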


\begin{proof}  By Theorem~\ref{thm:sim} the attractor $A_t$ is connected, hence is a point or a line segment for $t$ sufficiently close to $t_0$.  
Therefore,  $A_t = K_t$ for those values of $t$. By Lemma~\ref{lem:nested}, $A_s \subseteq A_t$ for $s\leq t$, so that $A^* := \lim_{t\rightarrow t_0} A_t$, 
as a limit of bounded nested intervals (or a point if $F_t$ is linear), is itself an interval (or a point). 
\end{proof}

Let $F_t, \, 0 \leq t < t_0,$ be a bounded one-parameter family, and let $A_t$ denote the attractor of 
$F_t$.  Let $K_t = conv \, A_t$ and 
\[K^* = \overline{ \bigcup_{t < t_0} K_t} = \lim_{t\rightarrow t_0} K_t,\] where the bar denotes the closure.  The
union is nested by Lemma~\ref{lem:nested} and is bounded by Theorem~\ref{thm:bounded}.
  Therefore $K^*$  is compact and convex.  
Call $K^*$ the {\bf transition hull} of $F_t$.  
\vskip 3mm

For a bounded family $F_t  = \{  t\, f_i(x) + q_i \, : 1\leq i \leq N \}$, let 
 \[F^* := F_{t_0} =  \{  t_0 \, f_i(x) + q_i \, : 1\leq i \leq N \}.\] 
From Definition~\ref{def:bounded}, the special function $f^*$, with fixed point $q_*$ is the unique isometry in $F^*$.
A compact set $X$ will be called $(F^*,q_*)$-{\bf invariant} if $F^*(X) = X$ and $q_* \in X$.  

\begin{definition} \label{def:lower}  For a bounded one-parameter family 
\[A_* = \bigcap \{A \in \mathbb H(\R^d)\,:\, A \; \text{is $(F^*,q_*)$-invariant} \}\]  will be called the {\bf lower transition attractor} of $F_t$.  The terms ``upper" and
``lower" are used because, for any upper transition attractor $A^*$, it is a consequence of Theorem~\ref{thm:K} below that $A_* \subseteq A^*$.  
Subsequent examples in this section show that they are not necessarily equal.  
\end{definition}

\begin{theorem}    \label{thm:K}
If $F_t, \, 0 \leq t < t_0,$ is a bounded family and $A^*$ is an upper transition attractor, then the following hold:
\begin{enumerate} 
\item  $A^*$ is $(F^*,q_*)$-invariant;
\item $A_*$ is the unique minimal (with respect to inclusion)  $(F^*,q_*)$-invariant set;
\item $K^*$ is unique minimal  $(F^*,q_*)$-invariant convex set;
\item $conv \, A_* = conv \, A^* = K^*$;
\item $A_* = \overline{\bigcup_{n\geq 0} {F^*}^n(q_*)}$.
\end{enumerate}  
\end{theorem}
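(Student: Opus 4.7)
The plan is to prove (1), (5), (2), (4), (3) in that order, with the key technical observation being that all fixed points $q_1, \ldots, q_N$ lie in the lower transition attractor $A_*$. For (1), the Hutchinson operator $(t, K) \mapsto F_t(K)$ is jointly continuous in the Hausdorff metric on $[0, t_0] \times \mathbb{H}(B)$, where $B$ is the $F_t$-invariant ball of Theorem~\ref{thm:bounded}, because each $f_{(i,t)}$ is affine in $(t, x)$ with uniformly bounded Lipschitz constant. Passing the identity $F_{t_k}(A_{t_k}) = A_{t_k}$ to the limit gives $F^*(A^*) = A^*$, and Proposition~\ref{prop:fixed} together with Proposition~\ref{prop:semi} yields $q_* \in A_{t_k}$ for all $k$, hence $q_* \in A^*$. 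For (5), let $X = \overline{\bigcup_{n \geq 0} {F^*}^n(\{q_*\})}$; since $f^*(q_*) = q_*$ the iterates are nested and contained in $B$, so $X$ is compact, and continuity of $F^*$ gives $F^*(X) = X$, so $X$ is $(F^*, q_*)$-invariant. Conversely, any $(F^*, q_*)$-invariant $A$ satisfies $A = {F^*}^n(A) \supseteq {F^*}^n(\{q_*\})$ for every $n$, hence $A \supseteq X$; the definition of $A_*$ then forces $A_* = X$, which proves (5) and makes $A_*$ itself $(F^*, q_*)$-invariant, giving (2).

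The central step is (4). First, for each $i \neq \hat i$ the map $f_{(i,t_0)}$ is a strict contraction with fixed point $q_i$ (because $t_0 r_i < 1$ by the ``bounded'' hypothesis, where $r_i$ is the scaling ratio of the linear part of $f_i$), and $F^*$-invariance of $A_*$ guarantees $f_{(i,t_0)}^n(q_*) \in A_*$, so $q_i = \lim_n f_{(i,t_0)}^n(q_*) \in A_*$; hence $\{q_1, \ldots, q_N\} \subseteq A_*$. Continuity of the convex hull operation in the Hausdorff metric together with Lemma~\ref{lem:nested} gives $K^* = \lim_k \text{conv}\, A_{t_k} = \text{conv}\, A^*$, and $A_* \subseteq A^*$ yields $\text{conv}\, A_* \subseteq K^*$. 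For the reverse inclusion, the semi-linearity identity
\[
f_{(i,t)}(x) \;=\; \Bigl(1 - \tfrac{t}{t_0}\Bigr) q_i \;+\; \tfrac{t}{t_0}\, f_{(i,t_0)}(x), \qquad t \in [0, t_0],
\]
exhibits $f_{(i,t)}(x)$ as a convex combination of $q_i \in A_*$ and $f_{(i,t_0)}(x)$; since affine maps commute with convex hulls and $F^*(A_*) = A_*$, we have $f_{(i,t_0)}(x) \in \text{conv}\, A_*$ for $x \in \text{conv}\, A_*$, so $F_t(\text{conv}\, A_*) \subseteq \text{conv}\, A_*$ for every $t < t_0$. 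Iterating and passing to the Hausdorff limit yields $A_t \subseteq \text{conv}\, A_*$ for every $t < t_0$, and then $A^* \subseteq \text{conv}\, A_*$, proving (4).

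For (3), $K^* = \text{conv}\, A_*$ is convex and $F^*$-forward invariant by the previous paragraph. Because $f^*$ is an isometry with $f^*(K^*) \subseteq K^*$ and preserves relative Lebesgue measure on the $f^*$-invariant affine span of $K^*$, any nonempty relatively open deficit $K^* \setminus f^*(K^*)$ would have positive relative measure; hence $f^*(K^*) = K^*$, and $F^*(K^*) = K^*$. Any $(F^*, q_*)$-invariant convex set $C$ contains $A_*$ by (2) and therefore $\text{conv}\, A_* = K^*$, so $K^*$ is the unique minimal such convex set. I anticipate the main obstacle to be the convex-combination step in (4) establishing $A^* \subseteq \text{conv}\, A_*$: this is where boundedness (ensuring $t_0 r_i < 1$ for $i \neq \hat i$, and hence $\{q_i\} \subseteq A_*$), semi-linearity (yielding the $q_i$-centered convex combination), and the commutation of affine maps with convex hulls all come together, whereas the remaining items are fairly routine applications of continuity, minimality, and measure preservation under isometries.
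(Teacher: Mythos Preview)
Your proof is correct and takes a genuinely different route from the paper's. The paper proves (2) directly by checking that the intersection defining $A_*$ is itself $(F^*,q_*)$-invariant, then shows $\text{conv}\,A^* = K^*$ via Lemma~\ref{lem:nested}, and then establishes (3) by a separate argument: any compact convex $K$ with $q_*\in K$ and $F^*(K)\subseteq K$ must contain every $q_i$ (by iterating the contractions $f_{(i,t_0)}$ on $q_*$), hence satisfies $F_t(K)\subseteq K$ for all $t<t_0$, forcing $A_t\subseteq K$ and thus $K^*\subseteq K$. Only after (3) does the paper finish (4), deducing $K^*\subseteq\text{conv}\,A_*$ from the minimality in (3). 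You instead prove (5) first and read (2) off from it, then go straight to (4) via the semi-linearity identity
\[
f_{(i,t)}(x) \;=\; \Bigl(1-\tfrac{t}{t_0}\Bigr)q_i \;+\; \tfrac{t}{t_0}\,f_{(i,t_0)}(x),
\]
which, once you know $q_i\in A_*$, immediately gives $F_t(\text{conv}\,A_*)\subseteq\text{conv}\,A_*$ for every $t\le t_0$. This is more direct and makes the roles of semi-linearity (the identity) and boundedness ($t_0 r_i<1$ for $i\neq\hat\imath$, so $q_i\in A_*$) completely transparent. Your derivation of the minimality in (3) from (2) and (4)---any $(F^*,q_*)$-invariant convex set contains $A_*$, hence contains $\text{conv}\,A_*=K^*$---is also cleaner than the paper's.

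One small imprecision: in (3) the deficit $K^*\setminus f^*(K^*)$ is relatively open in $K^*$ but not a priori open in $\text{aff}\,K^*$, so ``nonempty implies positive relative measure'' needs a word. The quickest fix is to note that two compact convex subsets of a finite-dimensional affine space, one contained in the other and both of the same positive Lebesgue measure, must coincide (push any point of the difference into the relative interior along a segment toward a relative-interior point of the smaller set). The paper, incidentally, invokes the isometry $f^*$ at the same step with even less detail.
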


\begin{proof}    Let $A^* = \lim_{k\rightarrow \infty} A_{t_k}$ be an upper transition attractor, where $t_k \rightarrow t_0$.  

Concerning statement (1),
We know that $q_*\in A^*$ because all the fixed points of the functions in $F_t$ are contained in $A^*$. 
We must  show that $F^*(A^*) = A^*$.   From the fact that $F_t(A_t) = A_t$ for all $t \in [0,t_0)$, we have
\[\lim_{k\rightarrow \infty} F_{t_k} (A_{t} )= \lim_{k\rightarrow \infty} A_{t_k} = A^*.\]  Therefore it is sufficeint to show that 
$F^*(A^*) = \lim_{k\rightarrow \infty} F_{t)k} (A_{t_k} )$,
or $ f_{t_0}(A^*) =  \lim_{k\rightarrow \infty} f_{t_k}  (A_{t_k} )$ for all $f_{t_k} \in F_{t_k}$.
For all  $f_{t_k} \in F_{t_k}$, we have  $ f_{t_k} (A_{t_k} ) - f_{t_0}(A^*) =
\Big (  f_{t_k} (A_{t_k} ) -  f_{t_k} (A^* )\Big )+\Big ( f_{t_k} (A^*) - f_{t_0}(A^*) \Big )$.  The result then follows from 
$A^* =  \lim_{k\rightarrow \infty}   A_{t_k}$ and
the fact that $t\mapsto f_t$ is continuous on $[0,t_0]$.

Concerning statement (2), clearly $q_* \in A_*$.  To show that  $A_*$ is $F^*$-invariant, in one direction,  that $F^*(A) = A$ for all $A$ that are  $(F^*,q_*)$-invariant, implies that  
\[\begin{aligned}  F^*(A_*) &= F^*\Big (\bigcap \{A \,:\, A \; \text{is $(F^*,q_*)$-invariant} \}\Big ) \subseteq \bigcap \{ F^*(A) \,:\, A \; \text{is $(F^*,q_*)$-invariant}\} \\
&= \bigcap \{ A \,:\, A \; \text{is $(F^*,q_*)$-invariant}\} = A_*.\end{aligned}\]
In the other direction, 
 $F^*(F^*(A_*)) \subset F^*(A_*)$ implies that
$F^*(A_*) \in   \{A \in \R^d\,:\, q_* \in A \; \text{and} \;F^*(A) \subseteq A \}$, which in turn implies that $A_* \subseteq F^*(A_*)$.  Since $A_*$ is the
intersection of all $F^*$-invariant sets, it is the minimum.

Concerning  the equality  $conv \, A^* = K^*$ in statement (4), the fact that $A_t \subseteq K_t \subseteq K^*$ implies that $A^* \subseteq K^*$.  Now, from Lemma~\ref{lem:nested},
\begin{equation} \label{eq:c}  conv \, A^* = conv \big (  \lim_{k\rightarrow \infty} A_{t_k}\big ) = 
 \lim_{k\rightarrow \infty}     conv \, A_{t_k} = K^*.\end{equation} 

Concerning the equality $F^*(K^*)= K^* $  in statement (3), using using Equation~\eqref{eq:c},
\[ \begin{aligned} F^*(K^*) &= F^* \big ( conv \, A^* \big ) =   \bigcup_{g\in F^*}  g\big ( conv \, A^* \big ) \subseteq\bigcup_{g\in F^*} conv \, g(A^*) \\
& \subseteq conv \, F^*(A^*)= conv \, A^* = K^*.\end{aligned}\]
Then  $K^* \subseteq F^*(K^*)$ because one function in $F^*$ is an isometry. 

To show that $K^*$ is the minimum such set in statement (3), we claim that 
$K^{*} := \bigcap \{K \,:\,K \; \text{compact convex}, \, F^*(K) \subseteq K, \, q_* \in K\}$. 
Let $K^{**} := \bigcap \{K \,:\,K \; \text{compact convex}, \, F^*(K) \subseteq K, \, q_* \in K\}$.   Since $K^*$ is one set
in this intersection, clearly $K^{**} \subseteq K^{*}$.  In the other direction, let $K$ be any element of $K^{**}$, i.e., $K$ is a compact convex set such that 
$F^*(K) \subseteq K$ and $q_* \in K$.  Since $K$ is compact, $F_*(K) \subseteq K$ and $q_*\in K$, we have that $\lim_{n\rightarrow \infty} \widehat f^n_t(q_*) \in K$ for all $\widehat f \in F^*$  which have similarity ratio less than $1$.  But the limit converges to the fixed point of $\widehat f$.  Since the special fixed point $q_*$ 
of the special function $f^* \in F^*$ is contained in $K$ by assumption, all fixed points of functions in $F^*$, and hence all fixed points of functions in $F_t$ for all $t\in [0,t_0)$, are contained in $K$.  Because $K$ is a convex set containing all fixed points of the similarity functions in $F_t, \,  t\in [0,t_0),$ 
 the fact that $F^*(K) \subseteq K$ implies that $F_t(K) \subseteq K$ for all $t\in [0, t_0)$.  Since this is true for all $K\in K^{**}$, we have  $F_t(K) \subseteq K^{**}$.
Now $A_t = \lim_{n\rightarrow \infty} F^n _t(K) \subseteq K \subseteq K^{**}$, and hence $K^* = conv A^* \subseteq K^{**}$.

Concerning the equality $conv \, A_* = K^*$ in statement (4), $conv \, A_*$ is $(F^*,q_*)$-invariant because $A_*$ is $(F^*,q_*)$-invariant by statement (2) 
and $F$ is affine.  Therefore, by statement (3),
we have $K^* =  \bigcap \{K \,:\,K \; \text{convex}, \, F^*(K) \subseteq K, \, q_* \in K\} \subseteq conv \, A_*$.  In the other direction, we have
$A_* \subseteq A^*$  because $A_*$ is the smallest $(F^*,q_*)$-invariant set and $A^*$ is $(F^*,q_*)$-invariant by statement (1).  Therefore
$conv\, A_* \subseteq conv \, A^* = K^*$ by the first equality in statement (4).

Concerning statement (5), since $q_* \in A_*$ and $A_*$ is $(F^*,q_*)$-invariant, it must be the case that  $\overline{\bigcup_{n\geq 0} {F^*}^n(q_*)} \subseteq A_*$.  But 
since   $\overline{\bigcup_{n\geq 0} {F^*}^n(q_*)}$ is $(F^*,q_*)$-invariant and $A_*$ is the minimum $(F^*,q_*)$-invariant compact set,  $A_* \subseteq \overline{\bigcup_{n\geq 0} {F^*}^n(q_*)}$.
\end{proof}

 \begin{example}[A Bounded Family for which $A^* \neq A_*$]   For the bounded one-parameter family 
$F_t = \{(t/4)\,x, t (x-1)+1 \}$ on the real line $\R$, the threshold $t_0=1$.  For $t<1$ sufficiently close to $1$, the attractor is the unit interval.
Hence $A^* = \lim_{t\rightarrow t_0} A_t = [0,1]$.  However, by statement (5) of Theorem~\ref{thm:K},
$A_* = \{0\} \cup \{(1/4)^n\}_{n=0}^{\infty}$, a countable set.  
\end{example}

\begin{figure}[htb]  
\includegraphics[width=7cm, keepaspectratio]{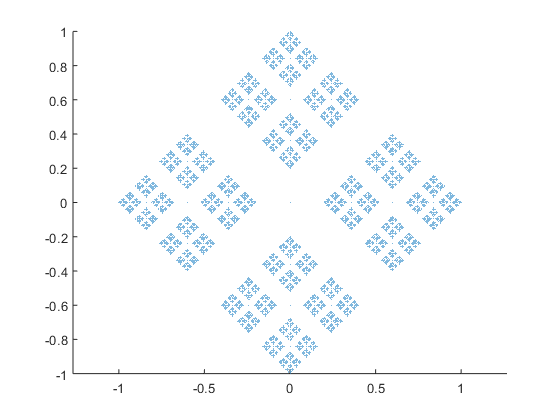} \hskip .5cm \includegraphics[width=7cm, keepaspectratio]{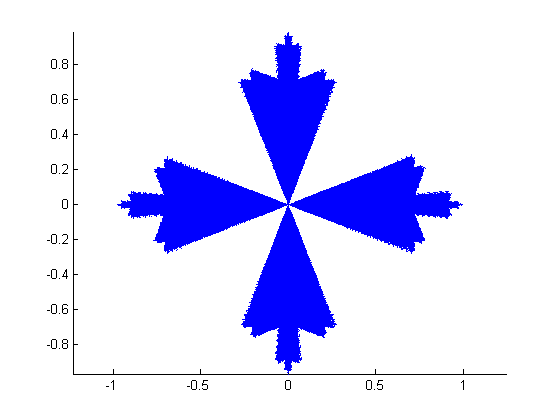} 
\caption{The lower and upper transition attractors of Example~\ref{ex:ta1}.}
\label{fig:tr1}
\end{figure}

\begin{example} \label{ex:ta1}  Consider  the bounded family
$F_t =\{t f_1(x), t f_2(x)+(1,0) \}$, where
\[f_1\begin{pmatrix} x\\ y \end{pmatrix} =  \begin{pmatrix} 0 & -1 \\ 1 & 0 
\end{pmatrix}\,\begin{pmatrix} x\\ y \end{pmatrix} , \qquad
 f_2\,\begin{pmatrix} x\\ y \end{pmatrix} =\alpha \,  \begin{pmatrix} x\\ y \end{pmatrix} - \alpha\,\begin{pmatrix} 1 \\ 0 \end{pmatrix},\]
and  $0<\alpha <1=t_0$. 
The lower transition attractor $A_*$ is shown
on the left in Figure~\ref{fig:tr1}, with $\alpha =0.4$.  This figure was computed using statement (5) of Theorem~\ref{thm:K}.  The transition hull 
is the square with vertices $(1,0),(0,1), (-1,0), (0,-1)$.    Computer graphics indicate that the upper transition attractor 
$A^* = \lim_{t\rightarrow 1} A_t$ exists and appears as on the right in Figure~\ref{fig:tr1} with $\alpha = 0.3$.  This figure actually shows $A_{0.99}$, computed using the chaos game algorithm with a higher 
probability of choosing the first function to compensate for the value of $t$ close to $1$.   
\end{example}

\begin{example} \label{ex:ta3}  Consider the bounded family
$F_t =\{t f_1(x), t f_2(x)+(1,0) \}$, where
\[f_1\begin{pmatrix} x\\ y \end{pmatrix} =  \begin{pmatrix} 1 & 0 \\ 0 & 1 
\end{pmatrix}\,\begin{pmatrix} x\\ y \end{pmatrix} , \qquad
 f_2\,\begin{pmatrix} x\\ y \end{pmatrix} =\alpha \,   \begin{pmatrix} 0 & -1 \\ 1  & 0 \end{pmatrix}\,\Bigg ( \begin{pmatrix} x\\ y \end{pmatrix} - \begin{pmatrix} 1 \\ 0 \end{pmatrix} \Bigg ),\]
and $0<\alpha <1=t_0$.     The lower transition attractor $A_*$ is
the countable set of points
\[ \Bigg \{\begin{pmatrix} 1 \\ 0 \end{pmatrix} \Bigg \} \bigcup \Bigg \{\alpha^n \begin{pmatrix} 0 & -1 \\ 1  & 0 \end{pmatrix}^n \begin{pmatrix} -1 \\ 0 \end{pmatrix} + \begin{pmatrix} 1 \\ 0 \end{pmatrix} \, :\, n \geq 0 \Bigg\},\]
that spiral around and approach the point $(1,0)$.  The upper transition attractor $A^* = \lim_{t\rightarrow 1} A_t$ appears to exist and is approximated in Figure~\ref{fig:tr3}.
\end{example}

\begin{example} \label{ex:ta2}  Consider  the bounded family
$F_t =\{t f_1(x), t f_2(x)+(1,0) \}$, where
\[f_1\,\begin{pmatrix} x\\ y \end{pmatrix} =\frac{1}{\sqrt{2}}\, \begin{pmatrix} 1 &-1 \\ 1 & 1 \end{pmatrix}\, , \qquad 
f_2\begin{pmatrix} x\\ y \end{pmatrix} =  \alpha \, \frac{1}{\sqrt{2}} \,\begin{pmatrix} 1 & 1 \\ -1 & 1 
\end{pmatrix}\,\Bigg ( \begin{pmatrix} x\\ y \end{pmatrix} - \begin{pmatrix} 1 \\ 0 \end{pmatrix} \Bigg ),\]
 $0<\alpha <1=t_0$.   The lower transition attractor $A_*$ is shown in Figure~\ref{fig:tr2}, with $\alpha =0.4$.   The upper
transition attractor $A^* = \lim_{t\rightarrow 1} A_t$ appears to 
exist and is approximated in Figure~\ref{fig:tr4} with $\alpha = 0.4$.  The figure on the left is $A_{0.9}$; the middle figure is 
$A_{0.94}$; and 
the figure on the right is $A_{0.98}$.  
\end{example}

\begin{figure}[htb]  
\vskip -3mm
\includegraphics[width=7cm, keepaspectratio]{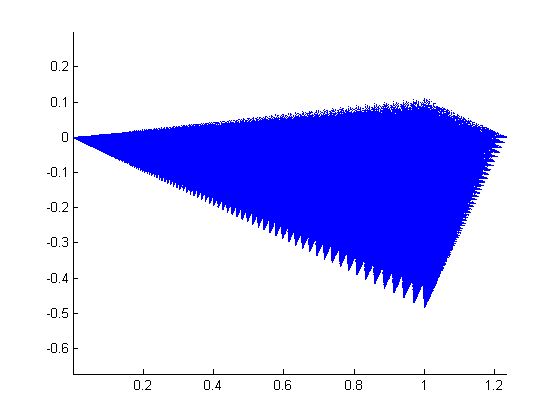}
\caption{The upper transition attractor $A^*$ of Example~\ref{ex:ta3}.}
\label{fig:tr3}
\end{figure}

\begin{figure}[htb]  
\includegraphics[width=8cm, keepaspectratio]{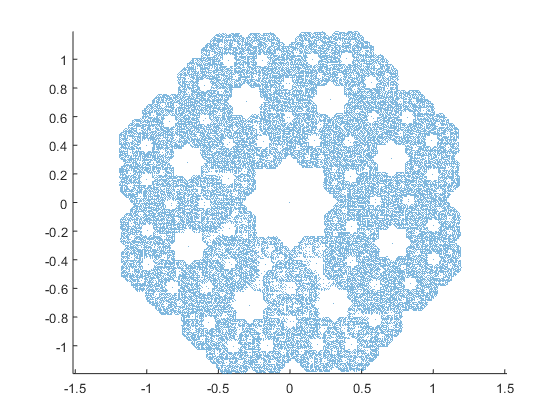} 
\caption{The transition attractor $A_*$ of Example~\ref{ex:ta2}.}
\label{fig:tr2}
\vskip 1cm
\end{figure}

\begin{figure}[htb]  
\includegraphics[width=4cm, keepaspectratio]{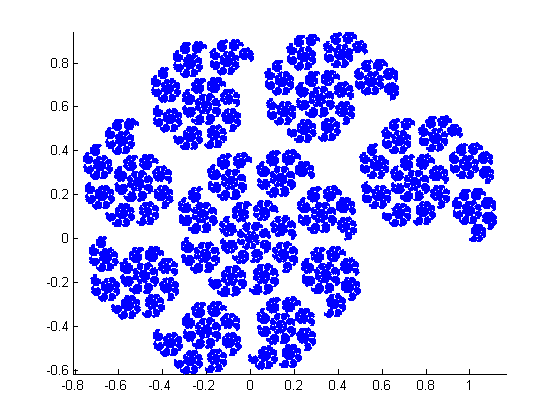} \hskip 4mm \includegraphics[width=4cm, keepaspectratio]{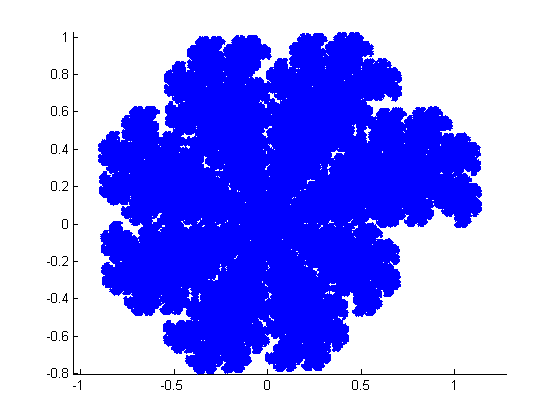} \hskip 4mm \includegraphics[width=4cm, keepaspectratio]{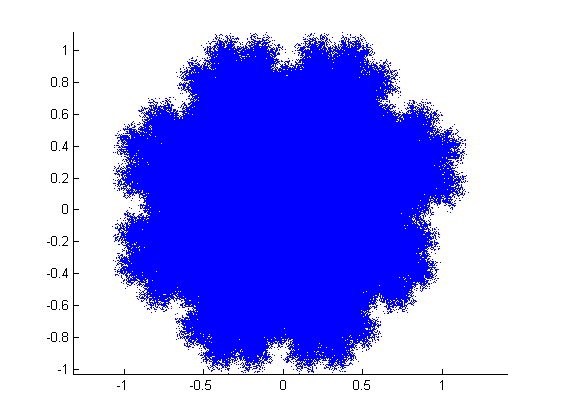}
\caption{The transition attractor $A^*$ of Example~\ref{ex:ta2}.}
\label{fig:tr4}
\vskip 10mm
\end{figure}
\newpage


\begin{thebibliography}{99}    
                                                                                     
\bibitem {B1} C. Bandt, On the Mandelbrot set for pairs of linear maps, {\it Nonlinearity}
{\bf 15} (2002) 1127–1147.

\bibitem {B2}  C. Bandt and N. V. Hung, Fractal n-gons and their Mandelbrot sets, {\it Nonlinearity} {\bf 21} (2008) 2653-2670. 

\bibitem {BH}  M. Barnsley and A. N. Harrington, A Mandelbrot set for pairs of linear maps, {\it Phys. D} {\bf 15} (1985) 421-32.

\bibitem {MB}  M. Barnlsey, Fractals Everywhere ($2^{nd}$ edition), Academic Press, Boston, 1993.  

\bibitem {BV1} M. F. Barnsley and A. Vince, The eigenvalue problem for linear and affine iterated function
systems, {\it Linear Alg. and its Applications} {\bf 435} (2011) 3124-3138.


\bibitem {BV2}   M. F. Barnsley and A. Vince, Fractal tiling for an iterated function system, {\it Discrete Comput. Geom.}, {\bf 51}     
        (2014) 729-752.

\bibitem {BV3}   M. F. Barnsley and A. Vince, Self-similar polygonal tiling, {\it Amer. Math. Monthly}, {\bf 124} (2017) 905-921.

\bibitem {BW} M. A. Berger and Y. Wang, {\it Bounded semigroups
of matrices}, Linear Algebra and its Applications \textbf{66} 
(1992) 21-27.

\bibitem {Bo} T. Bousch, Sur quelques problèmes de dynamique holomorphe, Ph.D. thesis, Orsay (1992). 

\bibitem {DK}  Z. Dar\'oczy and I. K\'atai, 1988 Generalized number systems in the complex plane, {\it Acta Math. Hung.}
{\bf 51} (1988) 409-416.

\bibitem {DL} I. Daubechies and J. C. Lagarias, {\it Sets of
matrices all infinite products of which converge}, Linear
Algebra and its Applications  \textbf{161} (1992) 227-263.

\bibitem {HS1}  K. Hare and N. Sidorov, Two-dimensional self-affine sets with interior points, and the set of uniqueness,
{\it Nonlinearity} {\bf 29} (2016) 1-26.

\bibitem {HS2}  K. Hare and N. Sidorov, {\it Ergod. Th. Dynam. Sys.} {\bf 37} (2017) 193-227.

\bibitem {hutchinson} J. Hutchinson, Fractals and self-similarity,
\textit{Indiana Univ. Math. J.} \textbf{30} (1981) 713-747.

\bibitem {LW} J. Luo and L. Wang, Lian, 
Topological properties of self-similar fractals with one parameter,
{\it J. Math. Anal. Appl.} {\bf 457} (2018), 396-409. 

\bibitem {LY} H. Li, J. Luo, J. Yin,
The properties of a family of tiles with a parameter,
{\it J. Math. Anal. Appl.} {\bf 335} (2007), 1383-1396. 

\bibitem {RS} Gian-Carlo Rota and W. G. Strang, {\it A note on
the joint spectral radius}, Nederl. Akad. Wet. Proc., Ser. A
\textbf{63} (1960) 379-381.

\bibitem {S1} B. Solomyak, On the ‘Mandelbrot set’ for pairs of linear maps: the local geometry, {\it Anal. Theory
Appl.} {\bf 20} (2004) 149-157.

\bibitem {S2}  B. Solomyak, On the ‘Mandelbrot set’ for pairs of linear maps: asymptotic self-similarity. {\it Nonlinearity}  {\bf 18} (2005), 1927-1943

\bibitem {V}  A. Vince, Digit tiling of Euclidean space, {\it Directions in Mathematical Quasicrystals}, M. Baake, R. Moody, eds.  (2000), 329-370.   

\end{thebibliography}
\end{document}